\newcommand{\edit}{\@ifstar\edit@star\edit@nostar}
\newcommand{\edit@star}[1]{{\color{blue}#1}}
\newcommand{\edit@nostar}[2][\@]{%
	\marginnote{%
	\if#1\@
		\"Andringar
	\else
		\"Andring: #1
	\fi
	}{\color{blue}#2}}
\newcommand{\newedit}{\@ifstar\newedit@star\newedit@nostar}
\newcommand{\newedit@star}[1]{{\color{red}#1}}
\newcommand{\newedit@nostar}[2][\@]{%
	\marginnote{%
	\if#1\@
		\"Andringar
	\else
		\"Andring: #1
	\fi
	}{\color{red}#2}}
\newcommand{\comment}[1]{\bgroup\reversemarginpar\marginnote{\null\hfill Kommentar\hfill\null}\emph{#1}\egroup}
\theoremstyle{definition}
\newtheorem{theorem}{Theorem}[section]
\newtheorem{lemma}[theorem]{Lemma}
\newtheorem{proposition}[theorem]{Proposition}
\newtheorem{problem}[theorem]{Problem}
\newtheorem{corollary}[theorem]{Corollary}
\newtheorem{example}{Example}[section]
\newtheorem*{definition}{Definition}
\theoremstyle{remark}
\newtheorem{remark}{Remark}[section]
\crefname{claim}{Claim}{Claims}
\crefname{case}{Case}{Cases}
\crefname{subcase}{Case}{Cases}
\crefname{subsubcase}{Case}{Cases}
\crefname{conjecture}{Conjecture}{Conjectures}
\newcounter{parenum}[theorem]
\let\eqref\labelcref
\crefname{equation}{}{}
\crefname{enumi}{}{}
\newlength{\revdirraise}
\newlength{\revdirextraraise}
\let\overrightarrow\dir
\let\overleftarrow\revdir
\DeclarePairedDelimiterX{\defset}[2]{\lbrace}{\rbrace}
	{\,#1:#2\,}
\begin{document}

\title{The edge chromatic transformation  index of graphs}

\author{Armen S. Asratian%
	\footnote{Department of Mathematics, Linköping University, email: armen.asratian@liu.se},
	Carl Johan Casselgren %
	\footnote{Department of Mathematics, Linköping University, email: carl.johan.casselgren@liu.se}}
\date{}
\maketitle

\begin{abstract}
Given a graph or multigraph  $G$,  let $\chi'_{trans}(G)$ denote the minimum integer $n$
such that any proper $\chi'(G)$--edge coloring  of $G$ can be transformed into any other proper $\chi'(G)$--edge coloring of $G$
by a series of transformations such that each of the intermediate colorings is a proper $\chi'(G)$--edge coloring of $G$ and 
each of the transformations involves
at most $n$ color classes of the previous coloring.  
We call  $\chi'_{trans}(G)$ the {\it  edge chromatic transformation  index of $G$}.

In this paper we show that if $G$ is a  
graph with maximum degree at least $4$, where every block is either  a bipartite graph,
a series-parallel graph,    a chordless graph, a wheel graph 
or a planar graph
of girth at least $7$, then $\chi'_{trans}(G)\leq 4$. This bound is sharp for series-parallel and wheel 
graphs. We  also show that 
$\chi'_{trans}(G)\leq 8$ for  all  planar graphs $G$, $\chi'_{trans}(G)\leq 5$ if $G$ is a Halin graph and  $\chi'_{trans}(G)=2$ if  $G$ is a regular bipartite planar  multigraph. Finally,  we consider the analogous problem
for vertex colorings, and
show that for any $k\geq 3$ there is an infinite class $\cal G$$(k)$ of   graphs   with chromatic number 
$k$ such that for every $G\in \cal G$$(k)$ any two    proper $k$-vertex colorings of $G$ 
 can be transformed to each other   
only by a   transformation, involving all $k$ color classes. 

\end{abstract}

\medskip
\bgroup
\noindent
\textbf{Keywords: }%
Edge coloring, transformation, Kempe equivalence, Halin graph, chordless graph,  series-parallel graph.
\par
\egroup

\section{Introduction}

Edge coloring problems appear in many places with seemingly no or little connections to graph
coloring \cite{AsrCassPetros,AsrWerra,AsrWerraDurand,coffman,Januario,Marx}. For example, many problems on school 
 timetables can be formulated in terms of edge colorings of bipartite graphs and multigraphs \cite{AsrDenHag,Werra}.
Some optimization problems formulated in terms of  edge colorings are NP-hard and therefore   handled with heuristic algorithms. 
Usually the underlying search problem is solved and the resulting feasible solution (a proper coloring) is used as a starting point 
to obtain a better solution.
Therefore we need some types of transformations which transform one proper coloring (timetable) to any other proper coloring (timetable). Moreover it is convenient to change colorings partly using as few color classes as possible.

In this paper,  graphs are finite, undirected, without loops and multiple edges.
In multigraphs multiple edges are allowed but not loops.
A 
{\em $t$-edge coloring} or simply {\em $t$-coloring} 
of a graph or multigraph $G=(V(G),E(G))$ is a
mapping
$f: E(G)\longrightarrow  \{1,\dots,t\}$.
If $e\in E(G)$ and $f(e)=j$ then we say that the edge $e$ is colored $j$. 
The set of edges of color $j$, denoted  by $M(f,j)$, 
is called a {\em color class}, $j=1,\dots,t$.  
A $t$-coloring of $G$ is
called
{\em proper }
if no adjacent edges receive the same color. 
The minimum number $t$ for which there exists a  proper $t$-coloring of $G$ is called 
the {\em chromatic index} of  $G$ and is denoted by $\chi'(G)$.

We shall say that two distinct  $t$-colorings $f$ and $g$ {\em differ} by  $n$ color classes  
if there is a set of colors $S$ of size $n$, such that 
 $M(f,j)\not=M(g,j)$ for each $j\in S$, but $M(f,j)=M(g,j)$ for each $j\notin S$.

\begin{definition}
Let $f$ and $g$ be two proper  $t$-colorings of a graph $G$. We will say that $f$  is obtained from $g$ by a series of 
{\it $n$-transformations}, and $g$ and $f$ are {\it $n$-equivalent},
 if there is a sequence of proper $t$-colorings $f_0,f_1,\dots,f_k$ of $G$ such that 
 $k\geq 1$, $f_0=f, f_k=g$ and $f_i$ differs from $f_{i-1}$  by  at most $n$ color classes,  $i=1,\dots,k$.
A $2$-transformation is called a {\it Kempe change} or {\it interchange}, and $2$-equivalence of colorings
is known as {\it Kempe-equivalence}. 
\end{definition}

Vizing \cite{vizing64} proved that for every   graph $G$,
$\chi'(G)\le \Delta(G)+1$, where $\Delta(G)$ denotes the maximum   degree of the vertices of $G$.
This  implies that for any   graph $G$, either $\chi'(G)=\Delta(G)$ or $\chi'(G)=\Delta(G)+1$. In the former case
	$G$ is said to be \emph{Class $1$}, and in the latter $G$ is {\em Class $2$}. 
	
Investigations of transformations of proper edge colorings mostly concern two problems.
The first of them is due to  	Vizing \cite{vizing64,Vizing2} who showed that any    
proper edge coloring of a  graph $G$ can be transformed to a proper  
$(\Delta(G)+1)$-coloring of $G$
by using Kempe changes only, and posed the following problem:

\begin{problem}
\label{prob:vizing}
	Is it true that any Class 1 graph $G$ satisfies the following property:
	every proper $t$-coloring of $G$,  $t\geq \Delta(G)+1$, can be transformed to a proper $\Delta(G)$-coloring  
	of $G$ by
	a sequence of Kempe changes? 
\end{problem}

The second problem was posed by Mohar \cite{mohar07}.

\begin{problem}
\label{prob:mohar}
 Is it true that all proper $(\chi'(G)+1)$-colorings 
 of a   graph $G$ are Kempe-equivalent?
\end{problem}

Asratian and Casselgren showed  that in fact Problems \ref{prob:vizing} and \ref{prob:mohar} 
 have the same answer (see \cite{asratian16}, Theorem 1.1).

It was successively  proved that the answers to  Problems \ref{prob:vizing} and \ref{prob:mohar} 
are positive if either  
$\Delta(G)=3$ (McDonald,  Mohar and  Scheide \cite{mohar12}), $\Delta(G)=4$ (Asratian and Casselgren \cite{asratian16}),
$\Delta(G)\geq 9$ and $G$ is a planar graph (Cranston \cite{cranston}), 
$G$ is  triangle-free graph (Bonamy et al \cite{bonamy23}).
In 2023 Narboni \cite{narboni} published a preprint with a proof that the answers to 
Problems \ref{prob:vizing} and \ref{prob:mohar} are positive.

A few results are known on Kempe-equivalence
 of proper $\chi'(G)$-colorings 
of a  graph $G$:
It was proved that all proper $\chi'(G)$-colorings of 
a  graph $G$ are Kempe-equivalent if $G$ is  a
 planar graph with $\Delta(G)\geq 15$ \cite{cranston},   or a 3-regular planar bipartite multigraph \cite{belcastro14}.
It is  also known that there are infinite classes of graphs $G$ whose proper $\chi'(G)$-colorings are not Kempe-equivalent 
\cite{belcastro14,mohar07}. 

Therefore,  it is natural  to introduce  the following  parameter:\smallskip

 Given a graph or multigraph $G$, let 
 $\chi'_{trans}(G)$  denote the minimum integer $n\geq 2$
such that any proper $\chi'(G)$--coloring  of  $G$ can be transformed into any other proper $\chi'(G)$--coloring of $G$
by a series of $n$-transformations such that each intermediate coloring is a proper $\chi'(G)$--edge coloring. 
We call $\chi'_{trans}(G)$ the {\it edge chromatic transformation index} of $G$.

The following two problems arise naturally:

\begin{problem}
\label{prob:trans1}
Given a graph or multigraph $G$, find or estimate $\chi'_{trans}(G)$.
\end{problem}

\begin{problem}
\label{prob:trans}
Is there  an absolute constant $C$ such that $\chi'_{trans}(G)\leq C$ for every   graph  or multigraph $G$?
\end{problem}

Asratian and Mirumian \cite{asratian91} proved (in other terminology) that $\chi'_{trans}(G)\leq 3$ for any bipartite multigraph $G$.
(A shorter proof was  suggested by Asratian \cite{asratian09}).

In the present paper we show that the answer to Problem \ref{prob:trans} is positive 
if and only if such a constant exists for 
all  Class 1 graphs (see Proposition \ref{prop:Class1}). 
Hence   we shall investigate Problem \ref{prob:trans1} and Problem \ref{prob:trans}  only for Class 1 graphs. 

To state our results we need to introduce some notions  (see also Section 2).
\smallskip

Given an integer $q\geq 2$, let  $\cal A$$(q)$ denote the set of  graphs 
 where for each $G\in \cal A$$(q)$  every subgraph $H$ of $G$ with $\Delta(H)\leq q$ has a proper $q$-coloring. 
Note that 
the set $\cal A$$(2)$ is in fact the set of all bipartite graphs.

Our main result is the following:

\begin{theorem}
\label{main:th}
Let $q$ be an integer, $q\geq 3$, and $G$ be a Class 1 graph with $\Delta(G)\geq 5$. 
If every  block of $G$ is either  a bipartite graph,
or a $(q+1)$-degenerate graph\footnote{
For a positive integer $k$,
a graph $G$ is said to be $k$-degenerate  if for each
 subgraph $H$ of $G$, $H$ contains a vertex $x$ with $d_H(x)\leq k$.}
 from the set $\cal A$$(q)$, then $\chi'_{trans}(G)\leq q+1$.
\end{theorem}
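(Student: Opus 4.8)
The plan is to argue by induction on the block-cut structure of $G$, reducing the global statement to one block at a time. Since distinct connected components recolor independently, we may assume $G$ is connected, and then induct on the number of blocks. I would in fact prove a slightly \emph{stronger} statement, in which one distinguished vertex carries a fixed (``precolored'') external star, so that the inductive hypothesis can be applied across a cut vertex. Pick a leaf block $B$ of the block-cut tree, meeting the rest of $G$ in a single cut vertex $c$, and set $G' = G - (V(B)\setminus\{c\})$. As $\mathcal A(q)$ is closed under subgraphs and degeneracy is hereditary, $G'$ again satisfies the hypotheses and is Class $1$ with $\chi'(G')=\Delta(G)$; the assumption $\Delta(G)\ge 5$ is used to keep $\chi'$ equal to $\Delta$ under such local surgery and to provide color slack for the maneuvers below. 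Given proper $\chi'(G)$-colorings $f,g$, one phase matches $f$ to $g$ on $G'$ and the other matches them on $B$; whichever phase runs second must respect the colors already fixed at $c$ by the first, which is exactly the precolored-star constraint. Because $d_{G'}(c)+d_B(c)=d_G(c)\le\Delta(G)$, there is always enough room at $c$ to carry this out.

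It therefore suffices to treat a single block $B$ carrying one precolored star. If $B$ is bipartite, this is essentially the Asratian--Mirumian bound $\chi'_{trans}\le 3\le q+1$ \cite{asratian91}, and one checks that their shifting argument survives the freezing of the colors at the one vertex $c$, since that is a mild single-vertex constraint. The heart of the theorem is thus the case where $B$ is $(q+1)$-degenerate and lies in $\mathcal A(q)$.

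For that case the key move is the following. Choose any $q$ colors; the union $U$ of the corresponding color classes has $\Delta(U)\le q$, so by the defining property of $\mathcal A(q)$ it admits a proper $q$-edge-coloring, and we may in fact replace the coloring of $U$ by \emph{any} proper $q$-coloring of $U$ using the same $q$ labels. Since every edge outside $U$ carries one of the remaining $\Delta-q$ colors, the replacement can never conflict with the rest of $B$ and is a single valid $q$-transformation; thus $\mathcal A(q)$ grants complete freedom to recolor any max-degree-$q$ subsystem. The $(q+1)$-st class is what lets us escape the obstructions that defeat pure Kempe arguments: for bipartite $B$ the moves just described are \emph{exactly} Kempe changes (the union of two matchings is a disjoint union of paths and even cycles, each with only its two flipped $2$-colorings), and these do not connect all $\chi'$-colorings in general \cite{belcastro14,mohar07}, which is precisely why the bound is $q+1$ rather than $q$. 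Using $(q+1)$-degeneracy I would fix a vertex $v$ with $d_B(v)\le q+1$, order the vertices so that $v$ is eliminated first, rotate the at most $q+1$ colors on the star at $v$ into their target $g$-positions using the freedom above together with the extra class, and then recurse on $B-v$.

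The main obstacle I anticipate is the tension between the \emph{global} nature of a color-class transformation and the \emph{local} effect wanted by the induction: recoloring $U$ alters it throughout $B$, so the sequence of moves must be engineered so that its support avoids the part of $B$ already matched to $g$. This is exactly where the $\mathcal A(q)$-freedom and the extra $(q+1)$-st color are spent, generalizing the three-color chains of Asratian--Mirumian to $q+1$ colors. Two further points of friction concentrate the real work: reconciling the precolored star at $c$ with the degeneracy elimination (a block may force $c$ itself to be eliminated early, so one cannot simply keep $c$ to the end), and proving termination, say by exhibiting a count of $f$--$g$ disagreements that strictly decreases under the constructed moves. Once these chains are shown to make progress while preserving properness at every cut vertex, assembling the blocks along the block-cut tree yields $\chi'_{trans}(G)\le q+1$.
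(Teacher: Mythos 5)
Your route (induction on the block--cut tree plus degeneracy-driven vertex elimination inside a block) is genuinely different from the paper's (induction on $\Delta(G)$, peeling off the top colour class $t$ and analysing the components of the symmetric difference $M(f,t)\bigtriangleup M(g,t)$), but as written it has gaps that are not merely technical. First, the two-phase split at a cut vertex $c$ does not come for free: after phase one the colouring agrees with $g$ on $G'$ but is some colouring $h$ on $B$, and you must then transform $h|_B$ into $g|_B$ subject to the frozen star of $G'$-edges at $c$; this ``precoloured star'' strengthening of the block-level statement is exactly what needs proving, and for the bipartite case you only assert that the Asratian--Mirumian argument ``survives'' the freezing. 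Second, and more seriously, the vertex-elimination recursion multiplies this problem: once the star at $v$ is matched to $g$ and you recurse on $B-v$, every subsequent move must also respect the frozen colours at $v$ and at all previously eliminated vertices. At that point your key device --- that the union $U$ of any $q$ colour classes has maximum degree at most $q$ and can, by membership in $\mathcal{A}(q)$, be replaced by an arbitrary proper $q$-edge-colouring --- no longer applies, because the recolouring of $U$ inside $B-v$ must \emph{extend} the precolouring on the frozen boundary edges; $\mathcal{A}(q)$ guarantees the existence of some proper $q$-colouring, not of a precolouring extension. Third, the step ``rotate the colours on the star at $v$ into their target $g$-positions'' is precisely where the work lies: the chains needed to bring a missing colour to $v$ run through the rest of $B$ and can destroy what has already been matched, and you supply no decreasing quantity to certify progress.

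For comparison, the paper never localises to a block in your sense. It works globally on $G$, uses $(q+1)$-degeneracy only to guarantee that a non-bipartite block is not $t$-regular, so that an ear-decomposition argument (Lemmas 3.6--3.7) produces a $t$-alternating escape path from any problematic cycle to a vertex of degree less than $t$; it handles cut vertices by escaping into a neighbouring block (Corollary 3.5), invokes the known $3$-equivalence result for $t$-regular bipartite blocks, and measures progress by the strictly increasing quantity $|M(\overline f,t)\cap M(\overline g,t)|$ before descending from $\Delta$ to $\Delta-1$. To salvage your plan you would need, at minimum, a precolouring-extension analogue of the $\mathcal{A}(q)$ hypothesis and an explicit potential function; neither is available from the stated assumptions.
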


Using this result, we show that if $G$ is a graph   with maximum degree at least $4$ 
where every  block is either 
a bipartite graph, a series-parallel graph, 
 a wheel graph, 
 a chordless graph, or a planar graph of girth at least 7, then $\chi'_{trans}(G)\leq 4$.
Note that this bound is sharp  for series-parallel and wheel graphs. We also show that
\begin{itemize}

	\item  $\chi'_{trans}(G)\leq 8$ if $G$ is an arbitrary planar graph, 
	
	\item $\chi'_{trans}(G)\leq 5$ if $G$ is a Halin graph, 
 
 	\item $\chi'_{trans}(G)=2$ if $G$ is a planar regular bipartite  multigraph. 
 		
\end{itemize}

 The latter statement
generalizes  a result of Belcastro and Haas \cite{belcastro14} for planar $3$-regular bipartite  multigraphs.

The answer to Problem \ref{prob:trans}, in general, remains  open. However in Section 5 we show that   the answer to a similar problem for vertex colorings is negative.
More precisely,  we show that for any $k\geq 3$ there exists an infinite class $\cal G$$(k)$ of  regular graphs   with chromatic number 
$\chi (G)=k$ such that for every $G\in \cal G$$(k)$ any two    proper $\chi(G)$-colorings of $G$ 
 can be transformed to each other   
only by a  "global" transformation, involving all $\chi(G)$ color classes.

The proofs in the present  paper are based on the method  suggested by Asratian in \cite{asratian09}.
In Section 2 we give some preliminaries, in Section 3 we prove a series of lemmas, and Section 4 contains the proofs of our main results.

 \section{Definitions and preliminary results}
 
A {\it block}  of a graph $G$ is a maximal connected subgraph of $G$ without a cutvertex of the subgraph; it may contain 
 cutvertices of $G$. Thus, every block is either a maximal 2-connected subgraph, or a bridge  (with its ends), or an isolated vertex.

 Let $f$ be a proper $t$-coloring of a graph $G$.
For  any two distinct colors $c$ and $d$, we  shall call a path (or cycle) $(c,d)$-{\it colored} or simply $2$-{\it colored} if its edges are alternately colored $c$ and $d$. 
A  colored path (or cycle) $C$ 
 is $c$-{\it alternating} if one of any two consecutive edges in $C$ is colored 
 with color $c$.

 For a vertex $v \in V(G)$,
	we say that a color $i$ {\em appears at $v$
	under $f$} if there is an edge $e$ incident to $v$
	with $f(e) = i$, and we set
	$$f(v) = \{f(e) : e \in E(G) \text{ and $e$ is incident to $v$}\}.$$
Every color in the set $\{1,2,\dots,t\}\setminus f(v)$ is called a {\it missing color} of $f$ at $v$.

Fournier gave a condition for a graph to be Class 1.

\begin{proposition} [Fournier  \cite{Fournier}]
\label{fournier}
Let $G$ be a 
graph. If the subgraph induced by the set of vertices  of degree $\Delta(G)$ in $G$, is acyclic, then $\chi'(G)=\Delta(G)$.
\end{proposition}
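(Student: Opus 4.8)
The plan is to argue by contradiction via the theory of edge-critical graphs, letting the acyclicity hypothesis enter only at the very last step. Write $\Delta=\Delta(G)$ and let $D=\{v\in V(G): d_G(v)=\Delta\}$, so that $G[D]$ is assumed acyclic. The cases $\Delta\le 1$ are trivial (then $G$ is a matching), so assume $\Delta\ge 2$. Suppose the statement fails; any counterexample must satisfy $\chi'(G)=\Delta+1$, since $\Delta\le\chi'(G)\le\Delta+1$ by Vizing's bound. Among all such Class~$2$ graphs whose maximum-degree vertices induce a forest, choose $G$ with $\card{E(G)}$ minimum.

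First I would show that this minimal $G$ is $\Delta$-critical, i.e.\ that $\chi'(G-e)=\Delta$ for every edge $e=xy$. The key observation is that deleting an edge can never create a cycle among the maximum-degree vertices: deleting $e$ only lowers $d_G(x)$ and $d_G(y)$, so whenever $\Delta(G-e)=\Delta$ we have $D(G-e)\subseteq D$, and then $(G-e)[D(G-e)]$ is a subgraph of the forest $G[D]$, hence acyclic; minimality then yields $\chi'(G-e)=\Delta$. In the only other case $\Delta(G-e)=\Delta-1$, which forces $D\subseteq\{x,y\}$, Vizing's bound applied to $G-e$ already gives $\chi'(G-e)\le(\Delta-1)+1=\Delta$. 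Thus $G$ is $\Delta$-critical.

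Next, since $G[D]$ is a forest and $D\neq\varnothing$, I would pick a vertex $v\in D$ that is a leaf or an isolated vertex of $G[D]$, so that $v$ has at most one neighbour in $D$. As $d_G(v)=\Delta\ge 2$, the vertex $v$ has at least $\Delta-1\ge 1$ neighbours outside $D$; fix one such neighbour $u$, so $d_G(u)\le\Delta-1$. Applying Vizing's Adjacency Lemma to the edge $uv$ in the critical graph $G$ shows that $v$ must have at least $\Delta-d_G(u)+1\ge 2$ neighbours of degree $\Delta$, that is, at least two neighbours in $D$ — contradicting the choice of $v$. This contradiction completes the proof, and the sharpness of the cases is irrelevant here.

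The main obstacle is establishing the adjacency lemma invoked in the last step; everything else is bookkeeping. I would prove it by the standard Vizing fan argument: take the $\Delta$-colouring of $G-uv$ guaranteed by criticality, note that the sets of colours missing at $u$ and at $v$ must be disjoint (otherwise $uv$ could be coloured, contradicting $\chi'(G)=\Delta+1$), and then grow a maximal fan of neighbours of $v$ and perform Kempe interchanges along suitable $2$-coloured paths to force the required number of degree-$\Delta$ neighbours. The delicate point is controlling the interaction between the fan and the Kempe chains — ensuring the maximal fan cannot be extended and that no interchange reintroduces a conflict — but this is precisely the classical argument already underlying Vizing's theorem cited in the excerpt.
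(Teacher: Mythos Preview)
The paper does not supply its own proof of this proposition: it is quoted verbatim from Fournier's paper \cite{Fournier} and used as a black box (to deduce Proposition~\ref{halin} on Halin graphs). So there is nothing to compare against.

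That said, your argument is correct and is essentially the classical proof. The reduction to a $\Delta$-critical graph is clean --- in particular your observation that deleting an edge can only shrink the set $D$ of maximum-degree vertices, so the hypothesis ``$G[D]$ is a forest'' is inherited by $G-e$, is exactly what makes the minimal-counterexample step go through. Picking a leaf of the forest $G[D]$ and applying Vizing's Adjacency Lemma to an edge from that leaf to a non-maximum-degree neighbour is the standard finishing move. One small remark: in the last paragraph you sketch the fan argument as if you would reprove the Adjacency Lemma from scratch; for the purposes of this paper it would be perfectly acceptable simply to cite it, since the paper already takes Vizing's theorem itself as known.
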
 
	
A graph $H$ is called a {\it Halin graph} if $H=T\cup C$, where $T$ is a 
plane tree on at least four vertices in which 
no vertex has degree 2, and $C$ is a cycle connecting the leaves of $T$ in the cyclic order determined by 
the  embedding of $T$. Clearly, the subgraph induced by the set of vertices in a Halin graph $G$ of degree at least $4$, is acyclic.
Therefore \cref{fournier} implies the following:

\begin{proposition} 
\label{halin}
Let $G$ be a Halin graph.  
Then  every subgraph $H$ of $G$ with $\Delta(H)\geq 4$ is Class 1.
\end{proposition}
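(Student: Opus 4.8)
The plan is to reduce everything to Fournier's criterion (\cref{fournier}): a graph is Class 1 whenever the vertices attaining its maximum degree induce an acyclic subgraph. So the entire task is to show that for a subgraph $H$ of $G$ with $\Delta(H)\geq 4$, the vertices of degree $\Delta(H)$ in $H$ induce a forest, and then to invoke \cref{fournier}.

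First I would analyze the degree distribution in a Halin graph $G=T\cup C$. Every leaf of $T$ lies on the cycle $C$, so it has degree $1$ in $T$ together with degree $2$ in $C$, giving degree exactly $3$ in $G$. Every internal (non-leaf) vertex of $T$ is incident with no edge of $C$, so its degree in $G$ equals its degree in $T$; since $T$ has no vertex of degree $2$, this degree is at least $3$. Consequently the vertices of $G$ of degree at least $4$ are precisely the internal vertices of $T$ whose $T$-degree is at least $4$.

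Next I would argue that these vertices induce an acyclic subgraph of $G$. The internal vertices of $T$ induce a subtree: along the $T$-path joining any two internal vertices, each intermediate vertex has $T$-degree at least $2$, hence (as $T$ has no degree-$2$ vertices) at least $3$, hence is itself internal; so the path stays among internal vertices, and the subgraph they induce is connected and acyclic. Any subset of the internal vertices therefore induces a forest, and in particular the set of degree-$\geq 4$ vertices of $G$ does.

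Finally I would transfer this to $H$. If $v$ has degree $\Delta(H)\geq 4$ in $H$, then $d_G(v)\geq d_H(v)\geq 4$, so $v$ belongs to the degree-$\geq 4$ set of $G$. Hence the vertices of maximum degree in $H$ form a subset of that set, and since $H$ is a subgraph of $G$, the subgraph of $H$ they induce is contained in the (acyclic) subgraph of $G$ induced by the same vertices, so it too is acyclic. Applying \cref{fournier} to $H$ then gives $\chi'(H)=\Delta(H)$, that is, $H$ is Class 1. I expect no real obstacle here beyond bookkeeping: the only points demanding care are the elementary degree count in $G=T\cup C$ and the two inclusions $d_H(v)\leq d_G(v)$ and $H$-induced $\subseteq$ $G$-induced, which are what carry acyclicity from $G$ down to an arbitrary subgraph $H$.
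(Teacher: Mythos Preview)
Your proposal is correct and follows exactly the approach the paper takes: the paper simply remarks that the vertices of degree at least $4$ in a Halin graph induce an acyclic subgraph and then invokes \cref{fournier}. You have supplied the details the paper leaves implicit (the degree count in $G=T\cup C$, and the passage from $G$ to an arbitrary subgraph $H$ via $d_H\le d_G$), so nothing further is needed.
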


A  graph is said to be {\it chordless} if   in every cycle of $G$  any two nonconsecutive vertices are not adjacent. Chordless graphs were first studied independently by Dirac \cite{dirac} and Plummer \cite{plummer}
in connection with  minimally 2-connected graphs.

A  2-connected graph is called {\it minimally 2-connected}, if for any $e\in E(G)$, the graph 
$G-e$ is not 2-connected. It can be easily verified that a graph is minimally
$2$-connected if and only if it is $2$-connected and chordless.
Note two   properties of chordless graphs  obtained in \cite{dirac}, \cite{plummer} and \cite{machado}.

\begin{proposition} [Dirac \cite{dirac}, Plummer \cite{plummer}]
\label{plummer}
Let $G$ be a   2-connected  chordless graph. Then  any cycle $C$ in $G$ with $|V(C)|\geq 4$ contains  at least two 
non-adjacent vertices whose degrees in $G$ are two.
\end{proposition}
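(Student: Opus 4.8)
The plan rests on the fact, recorded just before the statement, that a $2$-connected chordless graph is precisely a minimally $2$-connected graph, so that $G-e$ fails to be $2$-connected for \emph{every} edge $e$. First I would reformulate the conclusion combinatorially. Write $C=v_1v_2\cdots v_kv_1$. Since $G$ is chordless, $C$ is an induced cycle, so a vertex $v_i$ of $C$ satisfies $d_G(v_i)\ge 3$ exactly when it has a neighbour outside $C$; I call such a $v_i$ a \emph{branch vertex} and note that its off-cycle neighbours lie in a \emph{bridge} of $C$. Two degree-$2$ vertices of $C$ are non-adjacent if and only if they are non-consecutive on $C$, so it suffices to show that the degree-$2$ vertices of $C$ do not all lie within a block of at most two consecutive vertices. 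If $C$ has no branch vertex then $G=C$, and any two non-consecutive vertices (which exist because $k\ge 4$) work; moreover $C$ cannot have exactly one branch vertex, since each bridge needs at least two attachments on $C$ for $G$ to be $2$-connected, so a single branch vertex would be a cutvertex. Hence I may assume $C$ carries a block of at least two consecutive branch vertices, and in particular two \emph{adjacent} branch vertices $v_i,v_{i+1}$.

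The heart of the argument is the following consequence of minimality, which I would prove first. Fix an edge $e=v_iv_{i+1}$ of $C$. Then $G-e$ is not $2$-connected and hence has a cutvertex $w$. As $G$ is $2$-connected, $G-w$ is connected; since $(G-e)-w$ is disconnected while re-inserting the single edge $e$ must reconnect it, $(G-e)-w$ has exactly two components, with $v_i$ and $v_{i+1}$ in different ones. In particular $w\notin\{v_i,v_{i+1}\}$ and $w$ separates $v_i$ from $v_{i+1}$ in $G-e$. Because the arc $C-e$ joins $v_i$ to $v_{i+1}$, it must meet $w$, so $w=v_j$ for some $j\ne i,i+1$. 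The upshot is that \emph{every} path from $v_i$ to $v_{i+1}$ other than the edge $e$ passes through $v_j$; consequently no bridge of $C$ is attached to both $v_i$ and $v_{i+1}$, and no bridge has attachments in the interiors of both arcs into which $v_j$ and $e$ split $C$. This already disposes of short cycles directly: on a $4$-cycle, for instance, the bridge at a branch vertex would need a second attachment on $C$, yet the only available vertices are the other endpoint of $e$ (forbidden, since no bridge joins both endpoints of $e$) and the two degree-$2$ vertices (which host no bridge at all), a contradiction.

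For cycles with longer arcs I would argue by induction on $|V(G)|$. Using the adjacent branch vertices $v_i,v_{i+1}$ and their separator $v_j$, split $C$ at $v_j$ and at $e$ into the arcs $P_1$ (from $v_{i+1}$ to $v_j$) and $P_2$ (from $v_j$ to $v_i$), and let $G_1,G_2$ be the subgraphs induced by the two sides together with $v_j$. Each of these is chordless, being an induced subgraph of $G$, and once the relevant arc is closed into a cycle through $v_j$ (or, when the side is already $2$-connected, using a cycle it already contains) the instance is $2$-connected and strictly smaller than $G$ because the opposite side is non-empty. Applying the induction hypothesis on each side yields two non-adjacent degree-$2$ vertices there; since the two vertices joined by the closing are adjacent in the smaller graph, at least one of the pair lies in the interior of the arc, where it retains its degree in $G$ (its neighbours and its bridge stay on one side of $v_j$, by the non-crossing property). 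Taking one such interior vertex from $P_1$ and one from $P_2$, these are degree-$2$ vertices of $G$ lying in the interiors of different arcs, hence never consecutive on $C$ and so non-adjacent, which completes the induction.

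The main obstacle is making the splitting step legitimate. Closing an arc by a new edge can create a chord precisely when the corresponding side already contains two internally disjoint paths between $v_j$ and the relevant endpoint; but in that situation the side is itself $2$-connected and already carries a cycle through those two vertices, so one recurses on that cycle directly, whereas if no such pair of paths exists the added edge is chordless. Distinguishing these two cases, checking $2$-connectivity of the reduced instances, and handling the short arcs (too short to invoke induction, where one falls back on the direct separator argument of the $4$-cycle case) is where the care is needed; the fact that bridges cannot cross the separator $v_j$ is exactly what keeps the degree bookkeeping under control, guaranteeing that the degree-$2$ vertices supplied by the recursion are still degree-$2$ in $G$.
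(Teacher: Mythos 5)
First, a point of reference: the paper does not prove this proposition at all --- it is stated with an attribution to Dirac and Plummer and no proof is given --- so there is no in-paper argument to compare yours against, and I can only assess your plan on its own terms. Your central lemma is correct and is indeed the natural entry point: for an edge $e=v_iv_{i+1}$ of $C$, minimality forces a cutvertex $v_j$ of $G-e$ lying on $C-e$ and separating $v_i$ from $v_{i+1}$, whence no bridge of $C$ attaches to both ends of $e$ or to interior vertices of both arcs determined by $e$ and $v_j$. The degree bookkeeping at the end (an interior vertex of an arc keeps its degree, and interior vertices of different arcs are non-consecutive on $C$, hence non-adjacent since $C$ is induced) is also sound.

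The recursive architecture, however, has gaps that are not mere bookkeeping. The most serious is the branch where the side $G_1$ (the component of $(G-e)-v_j$ containing $v_{i+1}$, plus $v_j$) is already $2$-connected, so that closing the arc $P_1$ by the edge $v_{i+1}v_j$ creates a chord. Your fallback is to recurse on ``a cycle it already contains,'' but the induction hypothesis applied to such a cycle returns degree-$2$ vertices of \emph{that} cycle in $G_1$, which need not lie in the interior of $P_1$ --- indeed need not lie on $C$ at all. Concretely, $G_1$ can be a $4$-cycle $v_{i+1}\,w\,v_j\,p$ with $P_1=v_{i+1}\,w\,v_j$ (this genuinely occurs in minimally $2$-connected graphs): the recursion may hand you the pair $\{v_{i+1},v_j\}$, whose members have large degree in $G$, or the pair $\{w,p\}$, where $p$ is off $C$; in neither case do you get the interior-of-$P_1$ vertex you need. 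Second, your ``precisely when'' characterization of chord creation is wrong: besides the new edge becoming a chord of an existing cycle, an existing edge of $G_1$ can become a chord of a \emph{new} cycle through $v_{i+1}v_j$, so even the ``good'' branch is not verified to satisfy the induction hypothesis. Third, the base case covers only one configuration of the $4$-cycle (two adjacent branch vertices opposite two adjacent degree-$2$ vertices); the configurations with three or four branch vertices, and the arcs $P_1$ of length $1$ or $2$ (where the closed arc is not a cycle on at least $4$ vertices), are left unhandled. These short cases can be rescued by an observation you never make but which follows from chordlessness: no bridge of $C$ attaches to two \emph{consecutive} vertices of $C$, since rerouting $C$ through such a bridge yields a cycle in which that edge of $C$ is a chord; combined with your separator this forces the interior vertex of a length-$2$ arc to have degree $2$ and settles the $4$-cycle. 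As it stands, though, the ``side already $2$-connected'' case is a step that would fail, so the proof is incomplete.
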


\begin{proposition} [Machado et al \cite{machado}]
\label{machado}
All chordless graphs with maximum degree at least 3 are Class 1. 
\end{proposition}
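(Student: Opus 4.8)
The plan is to prove the equivalent inequality $\chi'(G)\le\Delta(G)$ (the reverse inequality being immediate), with Fournier's criterion, \cref{fournier}, as the main tool. Since $G$ need not be $2$-connected, I would first reduce to the $2$-connected case by working block by block. The key observation is that proper edge colourings of the blocks of a graph can always be merged into a proper edge colouring of the whole graph: processing the blocks along the block--cut tree and permuting the colours used inside each newly attached block, one keeps the colours of distinct blocks disjoint at every cut-vertex $v$, which is possible because the blocks meeting at $v$ use a total of $d_G(v)\le\Delta(G)$ edges there. This gives $\chi'(G)=\max\bigl(\Delta(G),\max_B\chi'(B)\bigr)$, the maximum ranging over the blocks $B$ of $G$, so it suffices to show that every block $B$ can be coloured with at most $\Delta(G)$ colours.

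Each block is an induced subgraph of $G$, hence itself chordless. A block that is a bridge needs one colour, and a block that is a cycle needs at most three colours; since $\Delta(G)\ge3$ by hypothesis, both are harmless. The interesting blocks are the $2$-connected chordless graphs $B$ with $\Delta(B)\ge3$, which then have at least four vertices. For such a $B$ I would apply \cref{fournier}: it is enough to check that the subgraph of $B$ induced by the vertices of degree $\Delta(B)$ contains no cycle. Suppose, for contradiction, that it contains a cycle $C$, so every vertex of $C$ has degree $\Delta(B)\ge3$ in $B$. If $\lvert V(C)\rvert\ge4$, then \cref{plummer} (applied to the $2$-connected chordless graph $B$) forces $C$ to contain a vertex of degree $2$ in $B$, contradicting that all of its vertices have degree $\Delta(B)\ge3$. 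Hence the subgraph induced on the maximum-degree vertices of $B$ is a forest, and \cref{fournier} yields $\chi'(B)=\Delta(B)\le\Delta(G)$.

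The remaining obstacle -- and the one point where \cref{plummer} is silent -- is the case $\lvert V(C)\rvert=3$, i.e.\ a triangle among the maximum-degree vertices. I would dispose of it by showing that a $2$-connected chordless graph with at least four vertices has no triangle at all. Indeed, if $uvw$ were a triangle in such a $B$, then by connectivity some triangle vertex, say $u$, has a neighbour $z\notin\{v,w\}$, and since $B-u$ is connected there is a path $P$ from $z$ to $\{v,w\}$ inside $B-u$, say ending at $v$ and internally avoiding $\{v,w\}$; then $u,z,\dots,v,w,u$ is a cycle of length at least four in which the triangle edge $uv$ joins two non-consecutive vertices, a chord, contradicting chordlessness. (Alternatively one may invoke the classical fact that a minimally $2$-connected graph other than $K_3$ is triangle-free.) It is worth stressing why the block reduction is genuinely needed rather than cosmetic: applying \cref{fournier} directly to $G$ can fail, since a triangle all of whose vertices have degree $\Delta(G)$ may well occur in $G$ (for instance a triangle with a pendant edge at each vertex, when $\Delta(G)=3$); such a triangle is an isolated $K_3$ block that is simply coloured with three colours, which the block decomposition accommodates automatically.
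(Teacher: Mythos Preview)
The paper does not supply a proof of this proposition; it is simply quoted from Machado, de~Figueiredo and Trotignon~\cite{machado}, so there is no ``paper's own proof'' to compare your attempt against.

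That said, your argument is correct and is a pleasant self-contained derivation from tools already assembled in the paper (\cref{fournier} and \cref{plummer}). The block reduction via the identity $\chi'(G)=\max\bigl(\Delta(G),\max_B\chi'(B)\bigr)$ is sound: the edges incident with a cut-vertex $v$ are partitioned among the blocks through $v$, so when a new block $B$ is attached along the block--cut tree the colours already used at $v$ together with the $d_B(v)$ colours $B$ needs there number at most $d_G(v)\le\Delta(G)$, and a permutation of colours inside $B$ suffices. For a $2$-connected chordless block $B$ with $\Delta(B)\ge3$ (hence $\lvert V(B)\rvert\ge4$), your use of \cref{plummer} to rule out induced cycles of length~$\ge4$ among the maximum-degree vertices is exactly right, and your direct argument that such a $B$ is triangle-free is correct as written: the cycle $u,z,\dots,v,w,u$ you construct has length at least four and the triangle edge $uv$ (or $uw$, symmetrically) is a chord. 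The closing remark that Fournier cannot be applied to $G$ globally, because an isolated $K_3$-block may consist of three maximum-degree vertices, is well observed and justifies the block decomposition.

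In summary: the paper defers to~\cite{machado}, whereas you give a valid, short proof from \cref{fournier} and \cref{plummer}; nothing is missing.
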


   A graph is {\it series-parallel} \  if it does not contain a $K_4$-minor. 
Note the following properties of series-parallel graphs (see, for example, \cite{nishizeki}).

\begin{proposition}
\label{prop:series}
If $G$ is a series-parallel graph with maximum degree at least $3$, then $G$ is Class 1 and contains a vertex of degree at most $2$. Moreover, every subgraph $H$ of  $G$ with $\Delta(H)\geq 3$ is Class 1 graph, too.
\end{proposition}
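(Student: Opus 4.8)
The final statement to prove is Proposition~\ref{prop:series}, which collects three facts about series-parallel graphs: such a graph $G$ with $\Delta(G)\geq 3$ is Class~1, it contains a vertex of degree at most $2$, and moreover every subgraph $H$ with $\Delta(H)\geq 3$ is Class~1. Since the proposition is cited as ``well known'' via \cite{nishizeki}, the plan is to give a clean self-contained argument rather than to reprove the edge-coloring theory from scratch. I would organize the proof around the structural fact that series-parallel graphs are closed under taking subgraphs and are $2$-degenerate.

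\begin{boldproof}[Proof proposal]
The plan is to establish the three claims in the order: low-degree vertex, $2$-degeneracy of the class, then Class~1.

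First I would prove the existence of a vertex of degree at most $2$. A series-parallel graph contains no $K_4$-minor, and I would invoke the standard fact (see \cite{nishizeki}) that every simple graph with no $K_4$-minor is $2$-degenerate, i.e.\ every nonempty subgraph has a vertex of degree at most $2$. Applying this to $G$ itself yields a vertex of degree at most $2$, proving the middle claim. The key observation I would stress is that the class of series-parallel graphs is \emph{minor-closed} (hence subgraph-closed): deleting or contracting an edge cannot create a $K_4$-minor that was not already present. Consequently every subgraph $H$ of a series-parallel $G$ is again series-parallel and therefore also $2$-degenerate.

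Next, for the Class~1 statements, I would argue by induction on the number of edges, using the $2$-degeneracy just established. Since the third claim (every subgraph $H$ with $\Delta(H)\geq 3$ is Class~1) subsumes the first, I would prove the third directly and note the first as the special case $H=G$. So let $H$ be any series-parallel graph with $\Delta(H)\geq 3$; I want a proper $\Delta(H)$-edge-coloring. By $2$-degeneracy $H$ has a vertex $v$ with $d_H(v)\leq 2$. Consider $H' = H - v$. Here I would split into cases according to whether $\Delta(H')$ equals $\Delta(H)$ or drops below it, and whether $\Delta(H')\geq 3$; the delicate bookkeeping is ensuring the inductive hypothesis applies (which requires $\Delta(H')\geq 3$) and handling small base cases where $\Delta(H')\leq 2$ separately, coloring such remnants greedily since graphs of maximum degree at most $2$ are disjoint unions of paths and cycles and are easily seen to be Class~1 unless they contain an odd cycle. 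Since $H$ is series-parallel it has no $K_4$-minor, and in particular an odd cycle as a component would still leave room, but the cleanest route is: color $H'$ properly with $\Delta(H)$ colors by induction, then extend to $v$. Because $d_H(v)\leq 2$, at most two colors are forbidden at the (at most two) neighbors for each edge at $v$, and since $\Delta(H)\geq 3$ there are enough colors available to color the one or two edges incident to $v$ without conflict.

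The main obstacle I anticipate is the extension step and the case analysis it forces. When $d_H(v)=2$ with neighbors $u,w$, I must color two edges $vu,vw$; each edge needs a color avoiding those already used at its far endpoint, and I must also keep the two new edges from clashing with each other. With $\Delta(H)\geq 3$ colors available and each endpoint $u,w$ having degree at most $\Delta(H)$ in $H$ (so at most $\Delta(H)-1$ colors used among its \emph{other} edges after removing $v$), a careful counting shows at least one valid color remains for each edge; if the naive greedy choice fails only because the two edges want the same unique remaining color, a short Kempe-type argument along a $2$-colored path through $u$ or $w$ resolves it. Making this final sub-argument airtight---and verifying the base cases $\Delta(H)\leq 2$ cannot arise as obstructions because reducing degree below $3$ returns us to trivial path/cycle colorings---is where the care is needed, but each piece is elementary and the minor-closedness of the class keeps the induction running smoothly.
\end{boldproof}
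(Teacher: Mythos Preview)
The paper does not prove this proposition at all: it is stated with the reference ``(see, for example, \cite{nishizeki})'' and no argument is given. So there is no paper proof to compare against; the relevant question is whether your self-contained argument actually goes through.

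Your structural part is fine: $K_4$-minor-free graphs are $2$-degenerate and the class is subgraph-closed, so the existence of a vertex of degree at most~$2$ and the reduction of the ``moreover'' clause to the main claim are both correct. The gap is in the inductive extension step for the Class~1 claim. Removing a vertex $v$ of degree~$2$ with neighbours $u,w$, properly $\Delta$-edge-colouring $H-v$, and then extending to the two edges $vu,vw$ does \emph{not} always succeed, even allowing a single Kempe swap. The bad case is $d_H(u)=d_H(w)=\Delta$, so that in $H-v$ each of $u,w$ has exactly one missing colour, and these coincide, say colour~$a$. Then $vu$ and $vw$ both demand colour~$a$. A $(a,c)$-Kempe chain from $u$ may well terminate at $w$ (both have $a$ missing, so both are legitimate endpoints); swapping along it leaves $c$ missing at \emph{both} $u$ and $w$, and you are back where you started with $c$ in place of $a$. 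The same obstruction recurs if you first colour $vu$ with $a$ and then try to free a colour at $w$: the $(a,b)$-chain from $w$ can end at $v$ through the edge $vu$, and swapping again just permutes which single colour is simultaneously missing at the two relevant vertices. Your phrase ``a short Kempe-type argument \ldots\ resolves it'' glosses over exactly the configuration where it does not.

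What is actually used in the literature is a finer structural lemma for series-parallel (equivalently $K_4$-minor-free) graphs than mere $2$-degeneracy: one shows that such a graph always contains a reducible configuration such as a vertex of degree at most~$1$, two adjacent vertices of degree~$2$, or a degree-$2$ vertex with a neighbour of degree strictly less than~$\Delta$. Each of these configurations avoids the bad case above (in the last one at least two colours are missing at one neighbour, so the greedy extension already works), and the induction then closes. Alternatively one can induct along the series/parallel composition tree. Either way, bare $2$-degeneracy plus one Kempe swap is not enough; you need to exploit more of the series-parallel structure to rule out the problematic configuration.
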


An important type of series-parallel graphs are outerplanar graphs. 
A    graph  is 
{\it outerplanar}, if it can be embedded in the plane in such a way that all its vertices lie 
on the boundary of the outer face. 
It is known that a graph is outerplanar if and only if it contains neither a $K_4$-minor nor a $K_{2,3}$-minor.

The length of a shortest cycle in a graph $G$ is called the {\it girth of $G$}.
Note two properties of planar graphs with large girth.

\begin{proposition} [Bonduelle, Kardo\u s \cite{girth}]
\label{girthplanar1}
Planar graphs of maximum degree 3 and of girth at least 7 have proper 3-colorings.
\end{proposition}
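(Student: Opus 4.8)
The plan is to prove the equivalent statement that every planar graph $G$ with $\Delta(G)\le 3$ and girth at least $7$ is Class~1, i.e.\ admits a proper $3$-edge-coloring: for $\Delta\le 3$ a proper $3$-coloring exists precisely when $G$ is Class~1, since then $\chi'(G)\in\{3,4\}$. I would argue by contradiction through the discharging method, studying a counterexample $G$ with the fewest edges; such a $G$ is connected, and I would first clear away the trivial cases. A vertex of degree $0$ may be deleted, and a vertex $v$ of degree $1$ with neighbor $w$ may be handled by deleting the edge $vw$, coloring $G-vw$ by minimality (planarity, the degree bound, and girth at least $7$ are all preserved under such deletions), and then coloring $vw$ with a color not appearing at $w$, of which there is at least one since $w$ has at most two other incident edges. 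Hence I may assume $\delta(G)\ge 2$.

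The heart of the argument is a list of reducible configurations built from degree-$2$ and degree-$3$ vertices. For each such configuration I would delete or shrink a small piece, color the smaller graph by minimality, and extend the coloring. The extensions rest on two facts special to the subcubic setting: each uncolored edge is forbidden at most two colors at each endpoint, and, when all three colors are nonetheless blocked, every $2$-colored subgraph (fix two colors $c,d$; the $c$-edges and $d$-edges each form a matching, so their union has maximum degree $2$) is a disjoint union of paths and even cycles, whence a Kempe change along an alternating path frees a color without spoiling properness elsewhere. The naive reduction---deleting a degree-$2$ vertex $v$ with neighbors $a,b$ and adding the edge $ab$---must be ruled out at the outset, because the shortest cycle through $v$ has length at least $7$, forcing an $a$--$b$ path of length at least $5$ avoiding $v$, so the new edge $ab$ would close a $6$-cycle and violate girth~$7$. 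This failure is exactly what forces the configurations to involve longer threads of degree-$2$ vertices, and it is the crux of the whole construction.

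For the discharging itself I would assign each vertex $v$ the charge $\deg(v)-4$ and each face $f$ the charge $\ell(f)-4$, where $\ell(f)$ is the length of $f$; by Euler's formula the total charge equals $-8$. Because the girth is at least $7$, every face has charge at least $3$, so all negative charge resides on the degree-$2$ vertices (charge $-2$) and degree-$3$ vertices (charge $-1$). I would then let each face send charge across its incident edges to nearby low-degree vertices, and use the reducibility from the previous step---which caps how densely degree-$2$ vertices may appear along a face relative to its length---to show that after discharging every vertex and every face has nonnegative charge. This contradicts the total charge being $-8$, completing the argument.

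The main obstacle is the interface between the two halves: one must choose a set of reducible configurations that is simultaneously strong enough to be completed by the Kempe-chain extensions and rich enough that the girth-$7$ sparsity forces at least one of them into every counterexample, so that the discharging weights can be balanced exactly. The threshold $7$ is what makes this bookkeeping close, which is why the coloring-extension lemmas and the discharging rules must be tuned to one another rather than developed independently.
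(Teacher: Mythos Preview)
The paper does not prove this proposition at all: it is quoted verbatim as a result of Bonduelle and Kardo\u{s} and cited without argument. So there is no ``paper's own proof'' to compare against here; the authors simply import the theorem.

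Your outline is the right genre of argument---minimal counterexample, reducible configurations handled by Kempe chains, then discharging with vertex charge $\deg(v)-4$ and face charge $\ell(f)-4$---and this is indeed the framework Bonduelle and Kardo\u{s} use. But what you have written is a plan, not a proof. The entire content of such a theorem lives in the two lists you have left blank: the precise set of reducible configurations (which threads of degree-$2$ vertices, in which arrangements relative to degree-$3$ neighbours, can always be reduced and extended), and the exact discharging rules (how much charge each face sends, to which incident vertices, and why every face retains nonnegative charge afterward). You acknowledge this yourself in your final paragraph. Until those lists are written down and each configuration is actually shown reducible and each rule actually shown to balance, nothing has been proved; the girth-$7$ threshold is tight enough that one cannot wave at ``sufficiently long threads'' and expect the bookkeeping to close on its own.

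One small correction to your setup: the shortcut reduction you rule out---replacing a degree-$2$ vertex $v$ with an edge $ab$---fails not because it \emph{would} create a $6$-cycle, but because it \emph{might}: the shortest $a$--$b$ path avoiding $v$ has length at least $5$, so the new cycle has length at least $6$, which is not enough to guarantee girth $7$. Your conclusion is right but the wording suggests the $6$-cycle is forced rather than merely possible.
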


\begin{proposition} 
\label{girthplanar2}
Planar graphs of maximum degree 3 and of girth at least 6 have a vertex of degree at most $2$.
\end{proposition}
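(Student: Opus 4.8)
The plan is to argue by contradiction using Euler's formula. Suppose $G$ is a planar graph with $\Delta(G)\leq 3$, girth at least $6$, and no vertex of degree at most $2$; I must derive a contradiction. Since every vertex then has degree at least $3$ while $\Delta(G)\leq 3$, the graph $G$ is $3$-regular. A finite graph of minimum degree at least $3$ cannot be a forest, so some connected component $H$ of $G$ contains a cycle; this $H$ is itself a connected $3$-regular planar graph whose girth is at least $6$, since any cycle of $H$ is a cycle of $G$. It suffices to obtain a contradiction for $H$, so from now on I work with a connected $3$-regular plane graph of girth at least $6$, having $n$ vertices, $m$ edges, and $r$ faces in a fixed plane embedding.

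The counting step combines three identities. Regularity gives $2m=3n$, hence $n=\tfrac{2}{3}m$. Euler's formula gives $n-m+r=2$. Finally, summing the boundary-walk lengths over all faces counts every edge exactly twice, so $\sum_{\phi}\ell(\phi)=2m$, and if each face boundary has length at least $6$, then $6r\leq 2m$, i.e.\ $r\leq\tfrac{1}{3}m$. Substituting into Euler's formula,
\[
2 = n-m+r \leq \tfrac{2}{3}m - m + \tfrac{1}{3}m = 0,
\]
which is the desired contradiction. Thus no such $G$ exists, and every planar graph of maximum degree $3$ and girth at least $6$ has a vertex of degree at most $2$.

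The one point that needs care --- and the main obstacle --- is the inequality $\ell(\phi)\geq 6$ for every face $\phi$, since the boundary of a face is in general a closed walk rather than a cycle when $H$ has a bridge. I would handle this by noting that each face-boundary walk of a connected plane graph that contains a cycle itself contains a cycle, and any cycle has length at least the girth, namely $6$; a bridge lying on such a boundary is traversed twice and only lengthens the walk, so $\ell(\phi)\geq 6$ persists in all cases. Alternatively, one may invoke the standard girth bound $m\leq\tfrac{g}{g-2}(n-2)$ for simple planar graphs of girth $g$, which for $g=6$ reads $m\leq\tfrac{3}{2}(n-2)$ and contradicts $m=\tfrac{3}{2}n$ directly. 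The remaining arithmetic is routine.
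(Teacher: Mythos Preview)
Your proof is correct and follows essentially the same route as the paper's: Euler's formula combined with the degree bound $2m\geq 3n$ and the face bound $2m\geq 6r$ to reach the contradiction $2\leq 0$. The paper's argument is terser (it simply assumes connectedness without loss of generality and asserts the face-degree inequality without discussing bridges), so your additional care on those two points is a bonus rather than a genuine deviation.
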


\begin{proof}
Without loss of generality we assume that $G$ is connected. Let $F(G)$ denote the set of faces of $G$.
Then $|V(G)|-|E(G)|+|F(G)|=2$ and $\sum_{x\in V(G)}d_G(x)=2|E(G)=\sum_{F\in F(G)}d(F)$, where $d(F)$ is the degree of a face $F$.
If $d(x)\geq 3$ for each vertex $x$ of $G$ and $d(F)\geq 6$ for each face $F$ of $G$, then $2|E(G)|\geq 3|V(G)|$, $2|E(G)|\leq 6|F(G)|$
and so, $2=|V(G)|-|E(G)|+|F(G)|\leq 2|E(G)|/3-|E(G)|+|E(G)|/3=0$, a contradiction
\end{proof}

The definition of the set $\cal A$$(q)$, $q\geq 2$, in the introduction implies  that if $G\in \cal A$$(q)$, 
then  any subgraph of $G$ belongs to $\cal A$$(q)$, too.  

Propositions 2.2 -- 2.7 imply that the set $\cal A$$(3)$ contains all  
chordless graphs, outerplanar graphs, series-parallel graphs   
and  planar graphs of girth at least 7. Furthermore, 
the set $\cal A$$(4)$ contains all Halin graphs and, by the result of Sanders and Zhao \cite{SandersZhao},
the set $\cal A$$(7)$ contains all  planar graphs.

A graph obtained from a cycle by adding a new vertex and joining it to every vertex
in the original graph by an edge is called a {\em wheel graph}.

\begin{proposition}
If $W_n$ is a wheel graph with $n\geq 5$, then $W_n\in \cal A$$(3)$.
\end{proposition}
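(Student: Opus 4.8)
The plan is to unwind the definition of $\mathcal{A}(3)$ and reduce the claim to a single family of graphs. By definition $W_n\in\mathcal{A}(3)$ means that every subgraph $H$ of $W_n$ with $\Delta(H)\le 3$ admits a proper $3$-edge-coloring, i.e.\ $\chi'(H)\le 3$. Since $\chi'$ is monotone under taking subgraphs (a proper coloring restricts to any subgraph), it suffices to prove $\chi'(H)\le 3$ for the edge-maximal such $H$. Writing $c$ for the hub and $C_n$ for the rim cycle on $v_1,\dots,v_n$ (so $n\ge 5$), the constraint $\Delta(H)\le 3$ forces $d_H(c)\le 3$, so $H$ contains at most three spokes, while every rim vertex has degree at most $3$ automatically. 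Hence every such $H$ embeds into a graph $W^{\ast}$ consisting of the whole rim $C_n$ together with exactly three spokes $cv_a,cv_b,cv_c$, and the whole proposition reduces to showing $\chi'(W^{\ast})\le 3$ for every choice of $\{a,b,c\}$.

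Next I would colour $W^{\ast}$ directly; note that \cref{fournier} cannot simply be invoked here, since the degree-$3$ vertices of $W^{\ast}$ may induce a triangle (the hub together with two adjacent spoked rim vertices). The three spokes pairwise meet at $c$, so I colour them $1,2,3$; at each branch vertex the two incident rim edges must then take the two remaining colours. Deleting $c$ splits the rim into three arcs $A_1,A_2,A_3$ of lengths $\ell_1,\ell_2,\ell_3\ge 1$ with $\ell_1+\ell_2+\ell_3=n$, and colouring $W^{\ast}$ amounts to colouring each arc as a path whose two end-edges have prescribed (allowed) colours. The elementary fact I would record is that a path with $\ell$ edges admits a proper $3$-edge-coloring with prescribed end-colours $(s,t)$ exactly when: $\ell=1$ and $s=t$; or $\ell=2$ and $s\ne t$; or $\ell\ge 3$ (any $s,t$). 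Encoding the branch-vertex choices as three binary variables (the order in which the two free colours sit on the two arc-ends at $v_a,v_b,v_c$), the task becomes choosing these orders so that every arc's end-pair meets the criterion dictated by its length.

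The main obstacle is precisely the coupling at the branch vertices: a unit arc forces its two ends to a common colour and thereby pins both adjacent branch-vertex orders, so several short arcs could over-determine the system. I would show that the only irreconcilable pattern is two arcs of length $1$ together with one arc of length $2$: the two unit arcs fix all three orders, forcing the remaining arc to receive equal end-colours, which contradicts the $\ell=2$ criterion. This configuration is exactly $K_4$ with one edge subdivided (equivalently $W_4$ with one spoke deleted), a graph with $7$ edges on $5$ vertices that no three matchings can cover, hence genuinely Class~$2$. But it requires $\ell_1+\ell_2+\ell_3=4$, whereas our hypothesis $n\ge 5$ gives $\ell_1+\ell_2+\ell_3\ge 5$, so whenever two arcs have length $1$ the third has length $\ge 3$ and is unconstrained. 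A short check of the remaining short-length patterns $((2,2,2),(2,2,1),(1,1,\ge 3),\dots)$ then yields a valid order assignment in every case, giving $\chi'(W^{\ast})=3$ and hence $W_n\in\mathcal{A}(3)$. The crux—and the reason the bound $n\ge 5$ is exactly the right threshold—is thus that $n\ge 5$ forbids the $W_4$-minus-a-spoke obstruction from arising among the three arcs.
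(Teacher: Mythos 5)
Your proof is correct, but it takes a genuinely different route from the paper's. The paper also reduces to subgraphs containing the full rim, but then splits on whether the degree-$3$ vertices contain a triangle: if not, it invokes Fournier's condition (\cref{fournier}) to conclude Class~1 immediately; if so (hub plus two consecutive spoked rim vertices), it exhibits explicit proper $3$-colorings in four subcases according to the position of the third spoke and the parity of $n$. You instead reduce once and for all to the edge-maximal subgraph $W^{\ast}$ (rim plus three spokes), colour the spokes $1,2,3$, and convert the problem into a constraint-satisfaction question on the three rim arcs with prescribed end-colours, using the elementary classification of when a path of length $\ell$ admits a proper $3$-colouring with prescribed colours on its end-edges. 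Your approach buys a uniform construction with no parity case analysis, and it isolates the unique obstruction --- arc type $(1,1,2)$, i.e.\ $K_4$ with a subdivided edge --- thereby explaining conceptually why $n\ge 5$ is exactly the right threshold (the paper's hypothesis $n\ge 5$ enters its argument only implicitly through the explicit colourings). The paper's approach is shorter where Fournier applies and leans on a cited theorem rather than a bespoke arc analysis, at the cost of a somewhat opaque list of colourings in the triangle case. Both arguments are complete; your reduction to $W^{\ast}$ is legitimate since adding rim edges and spokes up to three preserves $\Delta\le 3$, and your verification that all length patterns other than permutations of $(1,1,2)$ are satisfiable is easily checked.
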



\begin{proof}
Let $V(W_n)=\{v_0,...,v_n\}$ and $E(W_n)= \{v_1v_2, v_2v_3,...,v_{n-1}v_n, v_nv_1\}\cup$
$\{v_0v_1,...,v_0v_n\}$. 
  Furthermore, let $H$ be an arbitrary subgraph of $W_n$ with $\Delta(H)=3$.
It suffices to consider only the case  when the edges 
$v_1v_2, v_2v_3,...,v_{n-1}v_n, v_nv_1$ belong to $E(H).$

If $H$ does not contain three mutually adjacent vertices of degree 3, then, by Proposition 2.1,  $\chi'(H)=3$.
Suppose now that $H$ contains three mutually adjacent vertices of degree 3, say the vertices 
$v_0, v_1$ and $v_2$.
Let $v_s$ be the third vertex adjacent to $v_0$ in $H$.  The following cases are possible.

{\bf Case 1}. $s=3$,  that is, $v_0$ is adjacent to three consecutive vertices on the cycle $v_1v_2...v_nv_1$.

If $n$ is  odd and $n=2k+1$, then $H$ has a
 proper $3$-coloring $f$ where $f(v_0v_1)=1, f(v_0v_2)=2$, $f(v_0v_3)=3$, $f(v_1v_2)=3$, $f(v_1v_{2k+1})=2$
  $f(v_{2i}v_{2i+1})=1$,  for $i=1,2,...,k$,  and $f(v_{2i+1}v_{2i+2})=2$, for $i=1,2,...,k-1$.
 
 If $n$ is  even  and $n=2k$,  then $H$ has a
 proper $3$-coloring $f$ where $f(v_0v_1)=1, f(v_0v_2)=2$, $f(v_0v_3)=3=f(v_1v_2)$, $f(v_1v_{2k})=2$,
 $f(v_{2k-1}v_{2k})=3$, $f(v_{2i}v_{2i+1})=1$,  for $i=1,...,k-1$, and $f(v_{2i-1}v_{2i})=2$, for $i=2,...,k-1$.

{\bf Case 2}. $3<s<n$. .

If $n$ is  odd  and $n=2k+1$, then $H$ has a
 proper $3$-coloring $f$ where $f(v_0v_1)=1, f(v_0v_2)=2$, $f(v_0v_s)=3$, $f(v_1v_2)=3$, $f(v_1v_{2k+1}=3$,
  $f(v_{2i}v_{2i+1})=1$,  for $i=1,2,...,k$,  and $f(v_{2i+1}v_{2i+2})=2$, for $i=1,2,...,k-1$.
 
If $n$ is  even  and $n=2k$,  then $H$ has a
 proper $3$-coloring $f$ where $f(v_0v_1)=1, f(v_0v_2)=2$, $f(v_0v_s)=3$, $f(v_1v_2)=3$,  $f(v_1v_{2k})=2$,
 $f(v_{2k-1}v_{2k})=3$, $f(v_{2i}v_{2i+1})=1$,  for $i=1,...,k-1$, and $f(v_{2i-1}v_{2i})=2$, for $i=2,...,k-1$.

Thus, any  subgraph $H$ of $G$ with  $\Delta(H)=3$
   admits a proper $3$-coloring. This means that $W_n\in \cal A$$(3)$.  
\end{proof}

For future reference, we also state the following result.

\begin{proposition} [Asratian \cite{asratian09}]
\label{th:bipartite}
   Let $G$ be a  $t$-regular bipartite graph, $t\geq 4$,  and let $f$ and $g$ be two
    proper $t$-colorings of $G$. If $M(f,t)\not=M(g,t)$, then there is a proper  $t$-coloring $\overline f$   of $G$ 
such that    
$\overline f$ is $3$-equivalent to $f$ and 
$|M(\overline f,t)\cap M(g,t)|>|M(f,t)\cap M(g,t)|$. 
\end{proposition}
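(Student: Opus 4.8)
The plan is to prove this by studying how the color class $M(f,t)$ can be modified toward $M(g,t)$ using only Kempe changes (which are $2$-transformations) plus possibly one extra color class, so that the net effect involves at most $3$ color classes. Since $f$ and $g$ are proper $t$-colorings of a $t$-regular graph, each color class is a perfect matching of $G$. I would fix attention on the symmetric difference $M(f,t) \triangle M(g,t)$, which, being the symmetric difference of two perfect matchings, decomposes into a disjoint union of even cycles, alternately colored between edges of $M(f,t)$ and edges of $M(g,t)$.

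\medskip

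The key step is to pick one such cycle $C$ in this symmetric difference and recolor the edges of $M(f,t)$ lying on $C$ to agree with $g$. If I simply swap the colors along $C$, the edges of $M(f,t)\cap E(C)$ would lose color $t$ and the edges of $M(g,t)\cap E(C)$ would gain color $t$; but the latter edges already carry some colors under $f$, so this naive swap is not proper. The idea is to first free up the edges on $C$: I would look at the colors that $f$ assigns to the $g$-edges of $C$. If along $C$ the $f$-coloring uses a single auxiliary color $c \neq t$ alternating with $t$, then $C$ is a $(c,t)$-colored cycle and a single Kempe change on $C$ suffices, affecting only the two classes $c$ and $t$, hence a $2$-transformation that strictly increases $|M(f,t)\cap M(g,t)|$. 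In the general case the $g$-edges on $C$ carry various colors under $f$; here I would perform a preliminary Kempe interchange to convert $C$ into a $2$-colored cycle, using the maximum degree condition $t \geq 4$ and the fact that $G$ is $t$-regular to guarantee enough missing colors at the relevant vertices to carry out the recoloring within at most one additional color class.

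\medskip

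Concretely, first I would find a shortest component $C$ of $M(f,t) \triangle M(g,t)$, so that $C$ is a cycle on which color $t$ alternates with the $f$-colors of the $g$-edges. Second, using properness and $t \geq 4$, I would show that one can choose a single color $c$ and a sequence of at most one Kempe change to make the whole cycle $(c,t)$-alternating without disturbing $M(f,t) \cap M(g,t)$; this is where the regularity is used, since at every vertex of $C$ exactly one color is missing and that structure constrains which interchanges are available. Third, a final Kempe change on the resulting $(c,t)$-colored cycle produces the desired $\overline{f}$, with $|M(\overline{f},t) \cap M(g,t)| > |M(f,t) \cap M(g,t)|$ and with only the colors $c$ and $t$ (and whatever color was touched in the preliminary step) altered, for a total of at most three color classes.

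\medskip

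The main obstacle I anticipate is the preliminary step of reducing a generic alternating cycle $C$ to a $2$-colored one while keeping the transformation within the $3$-color-class budget and without decreasing the overlap with $g$ on color $t$. The subtlety is that the $f$-colors appearing on the $g$-edges of $C$ may all be distinct, so one cannot naively absorb them into a single Kempe chain; I would need to argue that by interchanging along a carefully chosen $2$-colored chain emanating from $C$, I can homogenize the auxiliary colors, and that the specific structure of a $t$-regular bipartite graph (in particular that bipartiteness rules out odd obstructions and that Kempe chains in bipartite graphs behave predictably) guarantees this chain closes up or terminates suitably. Bipartiteness should be essential precisely here, since in the bipartite $t$-regular setting Kempe chains are well-controlled and the count of color classes touched can be kept at most three.
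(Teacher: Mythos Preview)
This proposition is not proved in the present paper; it is quoted from \cite{asratian09} and used as a black box. The method of that earlier paper is, however, the template for the algorithm in Lemma~\ref{main:lemma1} here, so one can infer the intended argument.

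Your plan contains two genuine gaps.

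First, you assert that ``at every vertex of $C$ exactly one color is missing,'' and you appeal to ``enough missing colors at the relevant vertices.'' But $G$ is $t$-regular and $f$ is a proper $t$-coloring, so \emph{every} color appears at \emph{every} vertex; there are no missing colors anywhere. This is exactly what makes the regular case harder than the cases treated in Lemmas~\ref{main:lemma1}--\ref{main:lemma3}, where a vertex of low degree supplies a free color to launch the process.

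Second, your key step --- a single preliminary Kempe change that converts $C$ into a $(c,t)$-colored cycle --- cannot succeed in general. A Kempe change touches only two colors. If the $g$-edges of $C$ carry $k\ge 3$ distinct $f$-colors $s_1,\dots,s_k$, one interchange can merge at most two of them; the cycle will still be colored with at least $k-1\ge 2$ auxiliary colors plus $t$. There is no mechanism by which a single $2$-colored chain homogenizes the whole cycle, and you offer none.

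The approach that actually works is iterative, not ``homogenize then swap.'' One walks around $C$ processing maximal $2$-colored segments $P_0,P_1,\dots$ in turn: at step $i$ one interchanges $t$ and $s_i$ along $P_i$ together with $t$ and $s_{i+1}$ along $P_{i+1}$, and then repairs the single conflict created at the boundary vertex using only the colors $s_i,s_{i+1}$. Each such step touches just the three color classes $t,s_i,s_{i+1}$ and produces a new proper $t$-coloring, and after finitely many steps the color $t$ has been pushed all the way around $C$ onto $M(g,t)\cap E(C)$. Bipartiteness enters in the repair step --- it guarantees (via K\H{o}nig's theorem) that the subgraph of edges colored $s_i$ or $s_{i+1}$ can be properly recolored with those two colors --- not by supplying a missing color.
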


We shall also need some notation for cycles and paths.
Let $C$ be a cycle in a  graph $G$.
We denote by $\overrightarrow C$ the cycle~$C$ with a given orientation,
and by $\overleftarrow C$ the cycle~$C$ with the reverse orientation.
If $u,v\in V(C)$ then $u\overrightarrow Cv$ denotes the consecutive vertices of~$C$
from $u$ to~$v$ in the direction specified by $\overrightarrow C$.
The same vertices in reverse order are given by $v\overleftarrow Cu$.
We use $u^+$ to denote the successor of~$u$ on $\overrightarrow C$
and $u^-$ to denote its predecessor.
Analogous notation is used with respect to paths instead of cycles.

\section {Main lemmas} 

In this section we consider only Class 1 graphs.

Let $G$ be a  graph 
with $\Delta(G)=t$, 
and let $f$ and $g$ be   different
proper $t$--colorings of  $G$. Denote by  $G(f,g,t)$ (and, respectively,  by $G(g,f,t)$)
 the colored subgraphs induced by the edge subset
$$M(f,t)\bigtriangleup M(g,t)=(M(f,t)\cup M(g,t))\setminus (M(f,t)\cap  M(g,t))$$
where each edge
$e\in M(f,t)\bigtriangleup M(g,t)$ has the color $f(e)$  (has the color $g(e)$, respectively). 

\medskip

In the next lemma 
we shall describe an algorithm for transforming edge colorings
of a graph by a sequence of $(q+1)$-transformations. This algorithm will also form the basis
of the proofs of subsequent lemmas.
First we need the following definition.

\begin{definition}
Let $h_1$ be an improper $t$-coloring of a graph $G$, and  $Q\subset \{1,2,\dots,t\}$ be a subset of colors such that the coloring induced by the edges with colors from $Q$ is improper. 
If $h_2$ is a $t$-coloring of $G$   which agrees with $h_1$ on the
edges colored from $\{1,2,\dots,t\}\setminus Q$, and such that the coloring induced by the edges 
with colors from $Q$ is
proper, then we call $h_2$  a {\em correction of $h_1$ on the set $Q$}.
\end{definition}

\begin{lemma} 
\label{main:lemma1}
Let $G$ be a graph in $\cal A$$(q)$, $q\geq 3$,  with maximum degree $\Delta(G)=t\geq q+2$.
Furthermore, let  $f$ and $g$ be two  different
proper $t$--colorings of  $G$ and assume the subgraph $G(f,g,t)$
 has a component $C$  which is a path of even length. 
Then  there is a proper $t$-coloring $\overline f$ of $G$ such that  
$\overline f$ is $(q+1)$-equivalent to $f$ 
and $|M(\overline f,t)\cap M(g,t)|>|M(f,t)\cap M(g,t)|$.  
\end{lemma}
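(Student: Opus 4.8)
The plan is to analyze the structure of the even path $C$, then increase the overlap with $g$ by one by "pushing" the color $t$ off the terminal edge of $C$ and onto its neighbour, using an $\mathcal{A}(q)$-correction to overcome the fact that no color may be immediately free. First I would record the local structure. Every vertex of $G(f,g,t)$ has degree at most $2$ (at most one incident edge of $M(f,t)$ and at most one of $M(g,t)$ survive in the symmetric difference), so the component $C$ is indeed a path; moreover its edges strictly alternate between \emph{$f$-type} edges (in $M(f,t)\setminus M(g,t)$, colored $t$ by $f$) and \emph{$g$-type} edges (in $M(g,t)\setminus M(f,t)$, colored by $f$ with some non-$t$ color). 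Writing $C=v_0e_1v_1\cdots e_{2m}v_{2m}$, the even length forces the two terminal edges to be of opposite type, so I orient $C$ so that $e_{2m}$ is a $g$-type edge: $g(e_{2m})=t$ and $f(e_{2m})=a\neq t$. Since $v_{2m}$ has degree $1$ in $G(f,g,t)$, color $t$ is missing at $v_{2m}$ under $f$; and $e_{2m-1}$ is the $f$-type edge with $f(e_{2m-1})=t$ incident to $v_{2m-1}$.

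The goal is the single edge change $f(e_{2m})\colon a\mapsto t$, which adds $e_{2m}\in M(g,t)$ to the $t$-class. This is illegal only because $e_{2m-1}$ already carries color $t$ at $v_{2m-1}$, so the first phase is to recolor $e_{2m-1}$ to a non-$t$ color by a correction. Choose a palette $S\subset\{1,\dots,t\}\setminus\{t,a\}$ of size $q$ and let $H$ be the subgraph consisting of all edges $f$-colored from $S$ together with $e_{2m-1}$. By $\mathcal{A}(q)$, if $\Delta(H)\leq q$ then $H$ admits a proper $q$-coloring using the colors of $S$; recoloring $H$ by it leaves every non-$H$ edge fixed and gives colors from $S$ to the $H$-edges, so no conflict with the untouched edges (which use colors outside $S$) can arise and the result $h_2$ is a proper $t$-coloring. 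This phase alters only the classes of $S$ together with $t$ (the latter loses $e_{2m-1}$), hence is a $(q+1)$-transformation; crucially no edge becomes colored $t$, so $M(h_2,t)=M(f,t)\setminus\{e_{2m-1}\}$ and $t$ is now missing at $v_{2m-1}$. In the second phase I perform the interchange $h_2(e_{2m})\colon a\mapsto t$, a $2$-transformation touching only classes $t$ and $a$, which is legal since $t$ is missing at both $v_{2m-1}$ (just freed) and $v_{2m}$ (always). Calling the result $\overline f$, we have $M(\overline f,t)=\bigl(M(f,t)\setminus\{e_{2m-1}\}\bigr)\cup\{e_{2m}\}$; as $e_{2m-1}\notin M(g,t)$ while $e_{2m}\in M(g,t)$, the overlap $|M(\overline f,t)\cap M(g,t)|$ grows by exactly one, and $\overline f$ is reached from $f$ by a series of $(q+1)$-transformations through proper colorings, as required.

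The main obstacle is guaranteeing $\Delta(H)\leq q$ so that $\mathcal{A}(q)$ applies. At a vertex outside $\{v_{2m-2},v_{2m-1}\}$ the $H$-degree is at most $|S|=q$ automatically; the only danger is the two endpoints of $e_{2m-1}$, where the extra edge $e_{2m-1}$ can push the degree to $q+1$ unless at least one color of $S$ is missing there. Here the slack $t\geq q+2$ is used: since $a$ is present at $v_{2m-1}$ (via $e_{2m}$) and $t$ is present (via $e_{2m-1}$), any color missing at $v_{2m-1}$ already lies in the admissible pool $\{1,\dots,t\}\setminus\{t,a\}$, and likewise for $v_{2m-2}$, so whenever both endpoints have degree less than $t$ one can place a missing color of each into $S$ and fill $S$ arbitrarily from the remaining $t-2\geq q$ admissible colors. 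The remaining case is an endpoint of full degree $t$, where no color is missing at all; I would dispose of it by a preliminary interchange on color $t$ and a suitably chosen second color, chosen so that its $2$-colored chain does not reach $v_{2m}$ and hence creates a missing non-$t$ color at the offending endpoint without disturbing the relevant $t$-edges. I expect verifying that this auxiliary interchange neither decreases the current overlap nor reintroduces color $t$ at $v_{2m-1}$ to be the most delicate bookkeeping, but since $q\geq 3$ gives a budget of $q+1\geq 4$ color classes per step, there is ample room to absorb it.
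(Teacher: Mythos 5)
Your overall strategy---leave the $t$-class unchanged except at the very end of the path, i.e.\ aim for $M(\overline f,t)=\bigl(M(f,t)\setminus\{e_{2m-1}\}\bigr)\cup\{e_{2m}\}$---fails for a structural reason, not merely a bookkeeping one. In a proper $t$-coloring of $G$, every one of the $t$ colors appears at every vertex of degree $t$. The vertex $v_{2m-2}$ is an internal vertex of $C$ (for $m\geq 2$) and may well have $d_G(v_{2m-2})=t$; under $f$ the only edge colored $t$ at $v_{2m-2}$ is $e_{2m-1}$, so in your target coloring no edge colored $t$ would be incident with $v_{2m-2}$ at all, and hence no proper $t$-coloring with that $t$-class exists. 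The same objection already kills the intermediate coloring $h_2$ with $M(h_2,t)=M(f,t)\setminus\{e_{2m-1}\}$ whenever $d_G(v_{2m-1})=t$. Your proposed rescue---a preliminary interchange that creates a missing non-$t$ color at the offending full-degree vertex---cannot work either: a Kempe interchange on colors $(c,d)$ changes the set of colors present at a vertex only if that vertex is an endpoint of the swapped $(c,d)$-component, and a vertex of full degree $t$ is an internal vertex of every such component, so no interchange (indeed no proper recoloring whatsoever) can make a color missing there.

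The only vertices of $C$ guaranteed to have degree less than $t$ are the two endpoints $v_0$ and $v_{2m}$ (because $t$ is missing there under $g$ and $f$, respectively), and this is exactly the slack the paper exploits: its algorithm starts at $v_0$, where some color $c_0\neq t$ is missing, and flips the $t$-class along the \emph{entire} path, so that the final $t$-class is $(M(f,t)\setminus E(C))\cup(E(C)\setminus M(f,t))$---a target that keeps color $t$ present at every internal vertex of $C$ and is therefore consistent with full degrees. Achieving it requires a chain of $(q+1)$-transformations, each interchanging $t$ with the current second color along a maximal $2$-colored subpath of $C$ and then invoking the $\mathcal{A}(q)$-correction to repair the single conflict the interchange leaves behind; the conflict always sits at a vertex where, by construction of the preceding step, a usable color has just been freed. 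Your local move at the $v_{2m}$ end cannot be repaired without propagating changes all the way back to $v_0$, and organizing that propagation is essentially the entire content of the paper's proof.
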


\begin{proof}
Let $C$ be a component in $G(f,g,t)$ which is a path of even length, 
$C=v_0e_1v_1e_2\dots e_{2m}v_{2m}$ and    
$E(C)=\{e_1, e_{2}, \dots, e_{2m}\}$. Exactly one of the edges $e_1$ and $e_{2m}$ is colored $t$, so
without loss of generality, we assume that $f(e_1)=t$. This implies that  the color $t$ is missing in $g$ at $v_0$,
$f(e_{2m})\not=t$ and
the color $t$ is missing in $f$ at $v_{2m}$.  Therefore,  $d_G(v_0)<t$ and $d_G(v_{2m})<t$.
  Furthermore, $f(e_1)=t$ implies that  $f(e_{2i+1})=t$ for  $i=1,\dots,m-1$.

We will prove the lemma by showing that for some
$k\geq 1$, there is a sequence of proper $t$--colorings $f_0,f_1,\dots,f_k$, where
$f_0=f$, $f_{i+1}$ is obtained from $f_i$ by a $(q+1)$-transformation, for $i=0,1,\dots, k-1$, 
and  
$$M(f_k,t)=(M(f,t)\setminus  E(C))\cup (E(C)\setminus M(f,t));$$ 
that is,  $M(f_k,t)$ and $M(f,t)$  differ only on $C$ and $$|M(f_k,t)\cap M(g,t)|>|M(f,t)\cap M(g,t)|.$$
Thus, $\overline f=f_k$ is the required coloring. 

Suppose  that $f(e_2)=s_0$. Since $d_G(v_0)<t$, there is a color $c_0\not=t$ missing  in $f$ at $v_0$.
We will describe an algorithm to construct the required colorings $f_1, f_2,\dots$.
 The following cases are possible:\bigskip

{\bf Case 1.} {\em The edges of $C$ are colored with two colors, $s_0$ and $t$, under $f$}.

In this case
we can take $k =1$ and  define $f_1$ as follows:
First we exchange the colors $s_0$ and $t$ along $C$ and  leave
the colors of edges in $E(G)\setminus E(C)$  unchanged. 
 Denote the obtained coloring by $f_0'$. 

If  $s_0$ is missing at $v_0$ under $f$ then $f_0'$ is a proper coloring. Take $f_1=f_0'$, and we are done.

If, on the other hand, $s_0$ appears at $v_0$ under $f$, then $c_0\not=s_0$.  
Consider a set $R$  of 
$q-1$ distinct colors  from the set $\{1,2,\dots,t-1\}\setminus \{s_0\}$ which contains  
the color $c_0$. Each vertex of $G$ is incident with at most $q$ edges with colors from the set
$R \cup  \{s_0\}$ under $f'_0$.
Hence, since $G\in \cal A$$(q)$, there is a correction $f_1$ of $f'_0$ on the set $R \cup  \{s_0\}$.
%
Then $f_1$ is a required proper coloring obtained from $f$ by a $(q+1)$-transformation.  
\bigskip

{\bf Case 2.} {\em The edges of C are colored with at least three colors under $f$.}

       We will construct a sequence of proper $t$--colorings
$f_0,f_1,\dots$ and a sequence of auxiliary improper colorings $f'_{0}, f'_{1},\dots$ in the following way.
\bigskip

{\bf Step  0}.  Put $f_0=f$.
Let  $m_0$ be an integer such that
$f(e_{2m_0+2})\not=s_0$
and $P_0=v_0e_1v_1e_2\dots e_{2m_0}v_{2m_0}$ be a  $(t,s_0)$-colored
even path on $C$. Furthermore,   let $s_1=f(e_{2m_0+2})$
and $n_1$ be an integer such that  
$P_1=v_{2m_0}e_{2m_0+1}v_{2m_0+2}\dots e_{2n_1}v_{2n_1}$
is  a  maximal $(t,s_1)$-colored even path on $C$. 
    We denote by $f'_{0}$  
 a   coloring obtained from $f_0$ by
interchanging the colors $s_0$ and $t$ along  the path $P_0$, and
by interchanging the colors $s_1$ and $t$ along the path $P_1$.

Choose a set $R_0\subset \{1,2,\dots,t-1\}$ of $q$  distinct colors  including the colors $s_0, s_1$ and  $c_{0}$.
Since $G\in \cal A$$(q)$ and each vertex  of $G$ is incident with at most $q$ edges with colors from $R_0$,
there is a correction 
$f'_{00}$ of $f'_0$ on the set $R_0$.

If $n_1 = m$, then
put $k=1$,  $f_1=f'_{00}$ and we are done.

Suppose now 
that $n_1 \neq m$.
Then  $f'_{00}$ is an improper coloring
because two edges, $e_{2n_1}$ and  $e_{2n_1+1}$, incident with the vertex $v_{2n_1}$ are colored $t$.
Moreover, since the color $s_1$ is missing in $f'_{0}$ at $v_{2n_1}$,
   there is a color $\beta_0\in R_0$ missing in $f'_{00}$ at $v_{2n_1}$. Now we define a new coloring $f'_{01}$ as follows.
 If $\beta_0=s_1$, then put $f'_{01}=f'_{00}$, otherwise $f'_{01}$ is obtained from $f'_{00}$
by  interchanging the colors  $\beta_0$ and $s_1$ along the unique maximal $(s_1,\beta_0)$-colored
path with origin  $v_{2n_1}$.
\smallskip

\begin{remark} It is possible now that  the edge $e_{2n_{1}+2}$ receives the color $s_{1}$, that is, $f'_{01}(e_{2n_{1}+2})=s_{1}$. 
\end{remark}
\medskip

{\bf Step} $ i+1 (i\geq 0)$. Suppose that  we  have  already defined a set 
$R_i\subset \{1,2,\dots,t-1\}$ of $q$  distinct colors  with $s_i, s_{i+1}\in R_i$, and an integer  $n_{i+1}$,
$0<n_{i+1}<m$, and constructed a proper $t$--coloring
$f_i$ and an improper $t$--coloring $f'_{i1}$ of $G$,  such that
     $f'_{i1}$  satisfies the following conditions:
\medskip

1a) $M(f'_{i1},j)=M(f_i,j)$ for each $j\notin R_i\cup \{t\}$.\medskip

1b) There are no edges incident with $v_{2n_{i+1}}$ of color $s_{i+1}$,
but there are two of color $t$, and at most one of each color $j\neq s_{i+1}$,
$1\leq j\leq t-1$. \medskip

1c) At each vertex other than 
$v_{2n_{i+1}}$ each color appears on at most one edge, and
$ M(f'_{i1},t)=(M(f,t)\setminus \{e_{2j-1}:  j=1,\dots,n_{i+1}\})\cup \{e_{2j}:  j=1,\dots,n_{i+1}\}.$
\medskip

Since every vertex is incident with at most $q$ edges with colors from
$(R_i\cup \{t\})\setminus \{s_{i}\}$ under $f'_{i1}$, there is correction
$f_{i+1}$ of $f'_{i1}$ on the set $(R_i\cup \{t\})\setminus \{s_{i}\}$.
The coloring  $f_{i+1}$ is proper and since it differs from $f'_{i1}$ 
by at most $q$ color classes, 
$f_{i+1}$  differs
from  $f_i$ by at most $q+1$ color classes, that is, 
$f_{i+1}$ is obtained from $f_i$ by a $(q+1)$-transformation.

    In order to construct the next proper coloring $f_{i+2}$,  we   transform $f_{i+1}$ back to $f'_{i1}$ and 
     define  an integer $m_{i+1}$ and a new improper $t$-coloring $f'_{i2}$ as follows: 

\begin{itemize}
\item[(A)] If $f'_{i1}(e_{2n_{i+1}+2})\not=s_{i+1}$ then put $m_{i+1}=n_{i+1}$
and $f'_{i2}=f'_{i1}.$ 

\item [(B)] If $f'_{i1}(e_{2n_{i+1}+2})=s_{i+1}$ (see Remarks 3.1 and 3.2), then let $m_{i+1}$
be  the maximum
integer $j\leq m$ such that 
the path $L_i=v_{2n_{i+1}}e_{2n_{i+1}+1}v_{2n_{i+1}+1}\dots e_{2j}v_{2j}$
is an even $(t,s_{i+1})$-colored
path (with respect to $f'_{i1}$)  on $C$.
 We define   $f'_{i2}$ to be
a coloring  obtained from $f'_{i1}$
by interchanging the colors
$t$ and $s_{i+1}$ along  the path $L_i$. 

{\bf Subcase B1}. {\em $L_i$ includes the edge $e_{2m}$ of $C$.} 

This implies that $f'_{i2}$ is a proper $t$-coloring and it
differs  from $f_{i+1}$ by two color classes.
Take $k=i+2$, $f_{k}=f'_{i2}$ and we are done. 

\medskip

 {\bf Subcase B2}. {\em $L_i$ does not include the   edge $e_{2m}$.} 

This implies that $m_{i+1}<m$,
$L_i=v_{2n_{i+1}}e_{2n_{i+1}+1}v_{2n_{i+1}+1}\dots e_{2m_{i+1}}v_{2m_{i+1}}$,
 and $f'_{i1} (e_{2m_{i+1}+2})\not=s_{i+1}$, and we proceed with the
construction of  $f_{i+2}$ as when (A) holds.
\end{itemize}

In both the cases (A) and (B), the coloring  $f'_{i2}$ is not proper, and has the following properties:
\medskip

2a) There are no edges incident with $v_{2m_{i+1}}$ of color
$s_{i+1}$, but there are two of color $t$, and at most one of each color $j\neq s_{i+1}$,
$1\leq j\leq t-1$. \medskip

2b) At each vertex  $v\not=v_{2m_{i+1}}$ each color appears on at most one edge.\bigskip

Now we use the coloring $f'_{i2}$ and the integer  $m_{i+1}$ to define a new  coloring $f'_{i+1}$
and an integer $n_{i+2}$.

 Let $s_{i+2}$ denote the color of the edge $e_{2m_{i+1}+2}$ under  $f'_{i2}$,  
 and let $n_{i+2}$ be  the maximum
integer $j\leq m$ such that  the path 
$P_{i+2}=v_{2m_{i+1}}e_{2m_{i+1}+1}v_{2m_{i+1}+1}\dots e_{2j}v_{2j}$  is an
even $(t,s_{i+2})$-colored
path (with respect to $f'_{i2}$)  on $C$.
  We denote by $f'_{i+1}$ a $t$--coloring obtained from
$f'_{i2}$ by interchanging  the colors $t$ and $s_{i+2}$ along $P_{i+2}$.\medskip

\noindent {\bf Case 2.1}.  {\em $P_{i+2}$ includes the edge $e_{2m}$.}

 If $f'_{i+1}$ is a proper coloring then take $k=i+2$, $f_{k}=f'_{i+1}$ and we are done. 

If $f'_{i+1}$ is not proper, then two edges of color $s_{i+2}$ are incident to  the vertex $v_{2m_{i+1}}$ 
under the coloring $f'_{i+1}$ (and this is the unique violation under  $f'_{i+1}$). 
To obtain the required proper coloring $f_k$ we define it to be a correction
of $f'_{i+1}$ on 
a set $R$ of $q$ different colors including $s_{i+1}$ and $s_{i+2}$, but not $t$.
This is the required coloring, and we are done.
\bigskip

\noindent {\bf Case 2.2}. {\em $P_{i+2}$ does not include the edge $e_{2m}$.}

Then  $n_{i+2}<m$,
$$P_{i+2}=v_{2m_{i+1}}e_{2m_{i+1}+1}v_{2m_{i+1}+1}\dots e_{2n_{i+2}}v_{2n_{i+2}},$$
  and $f'_{i+1} (e_{2n_{i+2}+2}) \not=s_{i+2}$.
Furthermore, the coloring $f'_{i+1}$ is not proper, and has the following properties:
\medskip

3a) There are no edges incident with $v_{2n_{i+2}}$ of color
$s_{i+2}$, but there are two of color $t$, and at most one of each color $j\neq s_{i+2}$,
$1\leq j\leq t-1$. \medskip

3b) There are no edges  of color
$s_{i+1}$,  at most two edges of color $s_{i+2}$, and at most one of each color $j\notin \{s_{i+1}, s_{i+2}\}$,
$1\leq j\leq t-1$, incident with the vertex $v_{2m_{i+1}}$. \medskip

3c) At each vertex  $v\notin \{v_{2m_{i+1}},v_{2n_{i+2}}\}$ each color appears on at most one edge. \medskip

Now we will construct  two new improper colorings of $G$. 
First we define a set of colors $R_{i+1}$ as follows: If $s_{i+2}\in R_i$, then put $R_{i+1}=R_i$,
otherwise define  $R_{i+1}=(R_i\setminus \{s_i\})\cup \{s_{i+2}\}$. 
Then $R_{i+1}$ contains $q$   colors,
$R_{i+1}\subset \{1,2,\dots,t-1\}$ and  $s_{i+1}, s_{i+2}\in R_{i+1}$.
Since every vertex of $G$ is incident with at most $q$ edges with colors
from $R_{i+1}$ under $f'_{i+1}$, there is a correction $f'_{i+1,0}$
of $f'_{i+1}$ on the set $R_{i+1}$.

The condition (3a) implies that  at most $q-1$ edges with colors from
$R_{i+1}$ are incident with  $v_{2n_{i+2}}$.
So there is a color $\beta_{i+1}\in R_{i+1}$  
missing  at $v_{2n_{i+2}}$ under $f'_{i+1,0}$. 
We define a new coloring $f'_{i+1,1}$ as follows.  
 If $\beta_{i+1}=s_{i+2}$, then put $f'_{i+1,1}=f'_{i+1,0}$, otherwise $f'_{i+1,1}$ is obtained from $f'_{i+1,0}$
by  interchanging   the colors $\beta_{i+1}$ and $s_{i+2}$ along 
 the unique maximal $(s_{i+2},\beta_{i+1})$-colored path in $G$ with origin  $v_{2n_{i+2}}$. 
\smallskip

\begin{remark}
It is possible now that $f'_{i+1,1}(e_{2n_{i+2}+2})=s_{i+2}$.
\end{remark}

Thus, at Step $i+1$ of the algorithm we constructed the colorings $f_{i+1}, f'_{i+1,0}$, $f'_{i+1,1}$ and defined the integer $n_{i+2}$.
Go to Step $i+2$.

\medskip

Since $G$ is a finite graph, it follows that we can repeat the steps in the above algorithm
to obtain, for some $k\geq 1$, the required coloring $\overline f=f_k$.
 The proof of the lemma is complete.
   \end{proof}

 \begin{lemma}
\label{main:lemma2}
Let $G$ be a  graph in $\cal A$$(q)$, $q\geq 3$, with maximum degree $t\geq q+2$.
Furthermore, let $f$ and $g$ be two  different
proper $t$--colorings of  $G$ such that  the subgraph $G(f,g,t)$
 has a component  which is a path of odd length. 
Then there are two proper $t$-colorings $\overline f$ and $\overline g$ of $G$ such that    
$\overline f$ is $(q+1)$-equivalent to $f$, $\overline g$ is $(q+1)$-equivalent to $g$ and 
$|M(\overline f,t)\cap M(\overline g,t)|>|M(f,t)\cap M(g,t)|$. 
\end{lemma}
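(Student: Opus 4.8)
The plan is to reduce this odd-path situation to the even-path case already settled in \cref{main:lemma1}, by first transforming $g$ near one end of the path so as to gain a new common edge of color $t$ (and, if desired, to change the parity of the relevant component). I begin by reading off the structure of the component. Write $C=v_0e_1v_1\dots e_{2m+1}v_{2m+1}$. Since $M(f,t)$ and $M(g,t)$ are matchings, $G(f,g,t)$ is a disjoint union of even cycles and of paths whose edges alternate between $M(f,t)$ and $M(g,t)$; on a path of \emph{odd} length the two end-edges lie in the \emph{same} matching. Interchanging the roles of $f$ and $g$ if necessary, I may assume $e_1,e_{2m+1}\in M(f,t)$, so that $f(e_1)=f(e_{2m+1})=t$ while $g(e_1),g(e_{2m+1})\neq t$. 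Because $C$ is a full component of $G(f,g,t)$, no other edge colored $t$ by $g$ is incident with $v_0$ or $v_{2m+1}$; hence $t$ is missing under $g$ at both $v_0$ and $v_{2m+1}$, and in particular $d_G(v_0)<t$ and $d_G(v_{2m+1})<t$. This is exactly the obstruction distinguishing the odd case from \cref{main:lemma1}: under $f$ the color $t$ is present at \emph{both} ends of $C$, so the shifting procedure of that lemma, which relies on one end of the path missing $t$, cannot be run on $f$ alone.

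To remove the obstruction I would transform $g$ at the end $v_{2m+1}$ so that $e_{2m+1}$ becomes colored $t$. Put $s=g(e_{2m+1})$. Since $t$ is missing under $g$ at $v_{2m+1}$, recoloring $e_{2m+1}$ from $s$ to $t$ produces a single conflict, at $v_{2m}$, where $e_{2m}$ is the unique edge of color $t$ under $g$. I resolve this conflict by the interchange/correction mechanism underlying \cref{main:lemma1}: I first free the color $t$ at $v_{2m}$ by recoloring $e_{2m}$, which in general cascades and is therefore carried out either by a Kempe-type interchange or, using $G\in\mathcal A(q)$, by a correction on a bounded set $Q$ of colors containing $t$. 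Either way I obtain a proper coloring $\overline g$ that is $(q+1)$-equivalent to $g$ and satisfies $\overline g(e_{2m+1})=t$. As now $f(e_{2m+1})=\overline g(e_{2m+1})=t$, the edge $e_{2m+1}$ is a new common $t$-edge, and it leaves the symmetric difference: in $G(f,\overline g,t)$ the component $C$ has shrunk.

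For the bookkeeping I would argue that this passage from $g$ to $\overline g$ adds $e_{2m+1}$ to the common $t$-edges while destroying no other agreement, so that $|M(f,t)\cap M(\overline g,t)|\ge |M(f,t)\cap M(g,t)|+1$; taking $\overline f=f$ then already yields the conclusion, with $\overline f$ and $\overline g$ respectively $(q+1)$-equivalent to $f$ and $g$. Alternatively, if the correction at $v_{2m}$ is arranged so that $e_{2m}$ also leaves the symmetric difference and the residual component $v_0e_1\dots e_{2m-1}v_{2m-1}$ is an even path of $G(f,\overline g,t)$ (or if one simply wishes to reach a genuine modification of $f$), I finish by invoking \cref{main:lemma1} on the pair $(f,\overline g)$ to produce $\overline f$ with $|M(\overline f,t)\cap M(\overline g,t)|>|M(f,t)\cap M(\overline g,t)|$, and combine the two non-strict/strict inequalities to obtain the desired strict gain $|M(\overline f,t)\cap M(\overline g,t)|>|M(f,t)\cap M(g,t)|$.

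The step I expect to be delicate is precisely the modification of $g$: I must guarantee that recoloring $e_{2m+1}$ to $t$ and cleaning up the conflict at $v_{2m}$ is realizable by $(q+1)$-transformations \emph{and} does not lower the global color-$t$ agreement. A blind $(s,t)$-Kempe interchange can run far outside $C$ and flip the $t$-status of edges lying in $M(f,t)\cap M(g,t)$, which would lose agreement; the safe route is to localize the repair by an $\mathcal A(q)$-correction on a small color set around $v_{2m}$, whose bounded size keeps the transformation within $q+1$ color classes and whose effect on $M(\cdot,t)$ can be confined to the intended neighborhood. Making this localization precise, so that the only net change to the color-$t$ classes is the gain of the common edge $e_{2m+1}$, is the heart of the argument and mirrors the case analysis (cases (A), (B), and $2.1$, $2.2$) already carried out in the proof of \cref{main:lemma1}.
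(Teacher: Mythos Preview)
Your overall direction—modify one of the two colorings so as to feed the even-path machinery of \cref{main:lemma1}—is exactly the paper's, and your role-swap (end-edges in $M(f,t)$, so work on $g$) is just the symmetric version of what the paper does. The gap is in the execution of the ``cleanup'' at $v_{2m}$. Neither of your two proposed mechanisms works: you yourself note that an $(s,t)$-Kempe chain may leave $C$ and flip edges of $M(f,t)\cap M(g,t)$; and an $\mathcal A(q)$-correction on a set $Q\ni t$ is \emph{global} by definition (it may recolor every edge whose current color lies in $Q$), so $M(\overline g,t)$ is not ``confined to the intended neighborhood'' at all. Your alternative route also fails on parity: after removing $e_{2m+1}$ and $e_{2m}$ from the symmetric difference, the residual $v_0e_1\cdots e_{2m-1}v_{2m-1}$ has \emph{odd} length $2m-1$, so \cref{main:lemma1} does not apply to it.

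The paper's fix is simpler than your sketch suggests and needs no separate ``local repair''. In your orientation, set $\overline f=f$ and run the \emph{full} algorithm of \cref{main:lemma1} on $g$ along the even subpath $v_{2m}e_{2m}\cdots e_1v_0$, with one minor change at Step~0: fold the recoloring of the end-edge $e_{2m+1}$ to $t$ into the initial improper coloring $g'_0$. The key observation you are missing is that this single recoloring automatically frees the color $s=g(e_{2m+1})$ at $v_{2m}$, and $s$ then serves as the missing color $c_0$ that the algorithm of \cref{main:lemma1} needs at its starting vertex. From there the algorithm runs verbatim and produces $\overline g$ whose $t$-class differs from $M(g,t)$ only on $E(C)$—so you gain all of $e_1,e_3,\dots,e_{2m+1}$ at once, not just $e_{2m+1}$, and no bookkeeping about stray Kempe chains or localized corrections is required. (The paper writes this with the roles reversed: it orients so that $f(e_0)\neq t$, sets $\overline g=g$, and modifies $f$.)
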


\begin{proof} 
Let $H$ be a component in $G(f,g,t)$ which is a path of odd length.
Then the definition of $G(f,g,t)$ implies that the degrees of the origin and 
terminus of $H$ are less than $t$.
We may assume that the first edge of $H$ is not colored $t$ under $f$
since otherwise  we may switch the roles of the colorings $f$ and $g$ 
and obtain an analogous result by considering 
an odd path $H$ in the subgraph $G(g,f,t)$ instead of $G(f,g,t)$.

Let $H=u_0e_0v_0e_1v_1e_2\dots v_{2m-1}e_{2m}v_{2m}$ and $f(e_0)=c_0\not=t$,
so $f(e_{2i-1})=t$, for $i=1,2,\dots,m$. Put $\overline g=g$.
If $H$ consists of one edge $e_0=u_0v_0$,  then, by the definition of $G(f,g,t)$, the color $t$ is missing at $u_0$ and $v_0$ 
under $f$. Therefore, we define $\overline f$ from $f$
by  recoloring  the edge $u_0v_0$ with color $t$.   

Suppose now that $H$ consists of at least three edges.  Let $C$ denote the path $v_0v_1v_2 \dots v_{2m}$ and assume $f(e_2)=s_0$. 
The following cases are possible.
\medskip

{\bf Case 1}. {\em $C$ is colored with two colors, $s_0$ and $t$.}

In this case we may construct the required  $t$-coloring $\overline f$ from $f$ by
 exchanging the colors of the edges $e_{2i}$ and $e_{2i+1}$, for $i=0,1,\dots, m-1$,
and then color the edge $e_{2m}$ with color $t$. 
\medskip

{\bf Case 2}. {\em $C$ is colored with at least 3 colors.}

With a minor modification, the algorithm described in the proof of Lemma \ref{main:lemma1}
 can be used for constructing the required  coloring $\overline f$.
 In fact, the modification concerns only  the improper coloring $f'_0$ at Step 0. More precisely, we will 
instead
define  $f'_0$ 
from $f_0=f$ by first recoloring the edge $e_0=u_0v_0$ with color $t$, then
interchanging the colors $s_0$ and $t$ along  the path $P_0=v_0e_1v_1e_2\dots e_{2m_0}v_{2m_0}$, and finally
interchanging the colors $s_1$ and $t$ along the path $P_1=v_{2m_0}e_{1+2m_0}v_{1+2m_0}\dots e_{2n_1}v_{2n_1}$.

Clearly,  the color $c_0=f(e_0)$ is missing at $v_0$ under $f'_0$. Therefore, if we continue to perform, without any changes,
 the algorithm described in the proof of Lemma \ref{main:lemma1} to the path $C=v_0v_1v_2\dots v_{2m}$,
we will eventually construct 
the required coloring $\overline f$ by applying $(q+1)$-transformations only. 
\end{proof}

\begin{lemma} 
\label{main:lemma3} 
   Let $G$ be a  graph in $\cal A$$(q)$, $q\geq 3$, with maximum degree $t\geq q+2$ and let $f$ and $g$ be two
    proper $t$-colorings of $G$. 
   Suppose that the subgraph $G(f,g,t)$ contains a $t$-alternating cycle $C$ 	that is colored with
	at most $q+1$ colors, or the degree of one of the vertices of $C$ is less than $t$ in $G$. 
  Then 
there is a proper  $t$-coloring $\overline f$   of $G$ 
such that 
$\overline f$ is $(q+1)$-equivalent to $f$ and 
$|M(\overline f,t)\cap M(g,t)|>|M(f,t)\cap M(g,t)|$. 
\end{lemma}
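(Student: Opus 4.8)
The plan is to reuse the defect‑pushing algorithm from the proof of \cref{main:lemma1}, but run it \emph{around} the cycle $C$ instead of along a path. First I would record the structure of $G(f,g,t)$. Since $f$ and $g$ are proper, at most one edge of $M(f,t)$ and at most one edge of $M(g,t)$ is incident to any vertex, so every vertex of $G(f,g,t)$ has degree at most $2$, and each component is a path or a cycle; because the edges of $M(f,t)$ inside a cycle component form a matching, such a cycle is $t$-alternating and has even length $2m$, its edges being alternately the $M(f,t)$-edges (colored $t$ under $f$) and the $M(g,t)$-edges (colored with their $f$-color, which is $\ne t$). Write $C=v_0e_1v_1\dots e_{2m}v_0$ with $f(e_1)=t$, so the odd edges carry $t$ and the even edges $e_2,e_4,\dots,e_{2m}$ lie in $M(g,t)$. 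The goal is to produce a proper $\overline f$ that agrees with $f$ off $C$ and on $C$ has the color class $t$ \emph{flipped}: the even edges receive color $t$ and the odd edges lose it. Off $C$ this leaves $M(\overline f,t)\cap M(g,t)$ unchanged, and it adds the $m\ge 1$ even edges to this intersection (each is now colored $t$ under both $\overline f$ and $g$), so $|M(\overline f,t)\cap M(g,t)|>|M(f,t)\cap M(g,t)|$, as required.

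To realize the flip I would push a defect (a vertex incident to two edges colored $t$) around $C$ exactly as in \cref{main:lemma1}: start at a chosen vertex by interchanging $t$ with the colors of the first maximal $2$-colored subpaths, apply an $\mathcal A(q)$-correction on a suitable $q$-element set of colors, and repeat. Each iteration is a single $(q+1)$-transformation that advances the defect past one more block and converts one more even edge into a $t$-edge. The hypothesis $t\ge q+2$ leaves enough room at each defect vertex for the corrections to exist, and the fact that a defect vertex carries two edges of color $t$ (hence at most $t-1$ distinct colors) always supplies a missing color to continue, just as in the open‑path case.

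The genuinely new point, and the main obstacle, is \emph{closing the loop}: when the defect returns to the start vertex $v_0$, that vertex is incident to two $t$-edges, the freshly recolored $e_{2m}$ and the still‑$t$ edge $e_1$, and this terminal defect must be removed without creating a conflict where the two ends of $C$ meet. This is exactly where the two hypotheses enter, in the two cases of the statement. If some vertex of $C$ has degree less than $t$, I would take it as $v_0$; then a non‑$t$ color is missing at $v_0$ to start the algorithm, and once $e_1$ has been recolored off $t$ the color $t$ becomes missing at $v_0$ (no external $t$-edge meets $v_0$), so the terminal step places $t$ on $e_{2m}$ and finishes, precisely mirroring the two endpoints of an even path in \cref{main:lemma1} (equivalently, one may open $C$ at $v_0$ into a path and quote \cref{main:lemma1} or \cref{main:lemma2}). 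If instead $C$ uses at most $q+1$ colors, then every correction, including the terminal one, can be carried out using only the colors occurring on $C$, so all recolorings stay within these at most $q+1$ colors; the terminal double-$t$ defect then admits a resolving correction that is still a single $(q+1)$-transformation, and the cycle closes.

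I expect the bulk of the verification to lie in two routine but delicate bookkeeping tasks, both parallel to \cref{main:lemma1}: checking that each intermediate correction changes at most $q+1$ color classes (a correction on a $q$-set together with the interchange involving $t$), and checking that the closing correction at $v_0$ reintroduces no conflict. Everything else, the existence of the corrections via membership in $\mathcal A(q)$ and the degree counting at defect vertices, transfers verbatim from the earlier lemmas.
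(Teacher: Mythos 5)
Your treatment of the case where some vertex of $C$ has degree less than $t$ matches the paper's: start the algorithm of \cref{main:lemma1} at that vertex, and the loop closes because once $e_1$ loses colour $t$ the colour $t$ is missing at $v_0$ until the final edge $e_{2m}$ receives it. The gap is in the other case, where $C$ is coloured with at most $q+1$ colours but every vertex of $C$ has degree $t$ in $G$. There the defect-pushing algorithm cannot even be \emph{started}, let alone closed: after the first interchange of $t$ and $s_0$ along $P_0$, the vertex $v_0$ has lost its unique $t$-edge and may carry two edges coloured $s_0$ (the recoloured $e_1$ plus the old $s_0$-edge at $v_0$, which exists because all $t$ colours appear at a degree-$t$ vertex). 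The only colour now missing at $v_0$ is $t$ itself, so the double-$s_0$ conflict cannot be repaired by a correction inside $\{1,\dots,t-1\}$ --- the subgraph spanned by any $q$ non-$t$ colours containing $s_0$ can have degree $q+1$ at $v_0$, which is outside the reach of $\mathcal{A}(q)$ --- while repairing it with colour $t$ either undoes the interchange or moves the class $t$ off $C$ uncontrollably. Your proposed fix (``all recolourings stay within the colours occurring on $C$'', ``the terminal double-$t$ defect admits a resolving correction'') addresses the end of the loop, but the obstruction sits at the beginning, and restricting to the colours of $C$ does not manufacture the missing colour you need at $v_0$.

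The paper sidesteps the iteration entirely in this case: it swaps the colours of $e_{2i-1}$ and $e_{2i}$ for all $i$ \emph{simultaneously}, so that colour $t$ remains conflict-free on all of $C$ (each vertex of $C$ trades its odd $t$-edge for its even one, and no off-$C$ edge at a $C$-vertex is coloured $t$), and the only violations involve the at most $q$ non-$t$ colours of $C$. Since the swap changes each vertex's number of edges in those classes by zero, every vertex still meets at most $q$ edges in a $q$-set of colours containing them, so a single $\mathcal{A}(q)$-correction on that set produces $\overline f$ in one $(q+1)$-transformation. That one-shot flip, rather than a second run of the incremental algorithm, is the idea your proposal is missing for this case.
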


\begin{proof}
Let
$C=v_0e_1v_1e_2\dots v_{2m-1}e_{2m}v_{2m}$, where $v_{2m}=v_0$, and let $E(C)=\{e_1,\dots, e_{2m}\}$. 
 Without loss of generality, we can assume that  
$f(e_{2i-1})=t$ for  $i=1,\dots, m$ and that 
$f(e_2)=s_0$. The following  cases are possible: 

\medskip

 {\bf Case 1.} {\em $C$ is colored with  at most $q+1$ different colors  under $f$.}

 Assume that the edges of $C$ are colored with $p+1$  colors, $t, s_0,s_1, \dots,s_{p-1}$, where $p\leq q$.
We construct $\overline f$
  as follows: 
 first exchange the colors of the edges $e_{2i-1}$ and $e_{2i}$ for $i=1,\dots, m$,
 and retain the color of every edge in $E(G)\setminus E(C)$. The obtained coloring we denote by $f_0'$.
 Choose $q$ distinct colors $b_1,b_2,\dots, b_{q}$ from the set $\{1,2, \dots,t-1\}$ 
including the colors $s_0,s_1,\dots,s_{p-1}$.
 Let $D$ be  the set of edges in $E(G)$ colored with colors $b_1,\dots,b_q$ under $f_0'$. 
Clearly, every vertex of $G$ is incident with at most $q$ edges from $D$, so since $G\in$$ \cal A$$(q)$,
we can  properly color the edges in $D$ with colors $b_1,\dots,b_q$ to obtain the required coloring $\overline f$.

\medskip

{\bf Case 2.} {\em 
A vertex in $C$ has degree less than $t$ in $G$.}

Without loss of generality, we can assume that  $d_G(v_0)<t$ and a color, say $c_0$, is missing at $v_0$ under $f$.
 Then using (without any changes)   the algorithm described in
the proof of Lemma 3.1  we can transform the coloring $f$ along the cycle $C$ and construct  for some $k\geq 1$, proper colorings  $f_0,f_1,\dots,f_k$ such that $f_0=f$, 
  $$M(f_k,t)=(M(f,t)\setminus  (E(C))\cup (E(C)\setminus M(f,t))$$
 and $f_{i+1}$ is obtained from $f_i$ by a $(q+1)$-transformation, for  $i=0,1,\dots, k-1$. 
  Note that the coloring $f_k$
 will be obtained at the step where the edge $e_{2m}$ receives the color $t$.
 Then $\overline f=f_k$ is the required coloring
  since $|M(f_k,t)\cap M(g,t)|>|M(f,t)\cap M(g,t)|$.
\end{proof}

\begin{lemma} 
\label{main:lemma4}
Let $G$ be a graph in $\cal A$$(q)$, $q\geq 3$, with maximum degree $t\geq q+2$,
and let
 $f$ and $g$ be two different proper $t$-colorings of $G$. Assume that  the subgraph $G(f,g,t)$
 contains a component $C$ which is a cycle where the edges are colored with at least $q+2$ colors, all vertices have degree $t$ 
 in $G$,  
 and there is a $t$-alternating path $P$ in $G$ satisfying the following conditions: 

\begin{itemize}
	
	\item $C$ and $P$ have only one common vertex which is the terminus of $P$,

	\item the origin of $P$ has degree less than $t$, 
	
	\item if $P$ has an odd length, then the color $t$ is missing at the origin of $P$. 

\end{itemize}

  Then 
there is a   proper  $t$-coloring $\overline f$   of $G$ 
such that    
$\overline f$ is $(q+1)$-equivalent to $f$, $M(\overline f,t)$ and $M(f,t)$ differ only on $C$, and 
$|M(\overline f,t)\cap M(g,t)|>|M(f,t)\cap M(g,t)|$. 
\end{lemma}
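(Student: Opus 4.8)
The plan is to reuse the rotation algorithm from the proof of \cref{main:lemma1} (together with the odd‑length modification from the proof of \cref{main:lemma2}), using the path $P$ only to supply the \emph{free colour} that this algorithm needs at a vertex of degree less than $t$. First observe why the earlier lemmas do not apply directly: since $C$ is a component of $G(f,g,t)$ it is a $t$‑alternating cycle of even length, say $C=v_0e_1v_1\dots e_{2m}v_{2m}$ with $v_{2m}=v_0$ and $f(e_{2i-1})=t$; by hypothesis every vertex of $C$ has degree $t$ and its edges carry at least $q+2$ colours, so neither Case~1 nor Case~2 in the proof of \cref{main:lemma3} is available. The goal is to flip the colour‑$t$ class of $C$, i.e.\ to reach a proper colouring in which the edges $e_{2i-1}$ are no longer coloured $t$ while the edges $e_{2i}$ are. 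Since on $C$ the colour‑$t$ edges are exactly the members of $M(f,t)\setminus M(g,t)$ while the remaining edges lie in $M(g,t)\setminus M(f,t)$, such a flip adds the $m$ edges of $E(C)\cap M(g,t)$ to the intersection and hence makes $|M(\overline f,t)\cap M(g,t)|$ strictly larger.

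I would carry this out in three phases. Let $v=v_0$ be the common vertex of $P$ and $C$ and let $w$ be the origin of $P$. Since $f$ is proper and $v$ already meets a $C$‑edge of colour $t$, the last edge of $P$ is not coloured $t$; the $t$‑alternating pattern then forces the first edge of $P$ to be coloured $t$ when $P$ is even, and to be non‑$t$ with $t$ missing at $w$ when $P$ is odd $-$ precisely the two situations handled in the proofs of \cref{main:lemma1} and \cref{main:lemma2}. In \textbf{Phase 1} I run that algorithm along $P$, seeded at $w$ by a missing colour $c\neq t$ (which exists because $d_G(w)<t$); because the far endpoint $v$ has degree $t$ and meets a $C$‑edge of colour $t$, the algorithm cannot terminate cleanly there but instead deposits a single defect at $v$, namely two incident edges of colour $t$ (the $C$‑edge $e_1$ and the last edge of $P$). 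In \textbf{Phase 2} this defect is exactly the seed required to flip $C$: applying the algorithm of \cref{main:lemma1} around the cycle, just as in Case~2 of the proof of \cref{main:lemma3}, rotates the colour‑$t$ class around $C$ and terminates when $e_{2m}$ receives colour $t$, leaving the defect at $v$ now between $e_{2m}$ and the last edge of $P$. In \textbf{Phase 3} I reverse Phase~1 along $P$: undoing those recolourings returns the last edge of $P$ to a non‑$t$ colour, which removes the residual defect at $v$, restores the original colouring on $P$, and is absorbed at $w$ by the free colour $c$.

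Each elementary move in these three phases is a step of the algorithm of \cref{main:lemma1}, hence a $(q+1)$‑transformation through proper colourings, so $\overline f$ is $(q+1)$‑equivalent to $f$. Because $P$ is swept once forward and once backward while $C$ is flipped exactly once, the net change to the colour‑$t$ class is confined to $E(C)$; thus $M(\overline f,t)$ and $M(f,t)$ differ only on $C$, and by the count above $|M(\overline f,t)\cap M(g,t)|>|M(f,t)\cap M(g,t)|$, as required.

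The main obstacle is the bookkeeping at the junction vertex $v$. I must verify that the defect deposited at $v$ at the end of Phase~1 is exactly the configuration that seeds the cycle rotation of Phase~2 $-$ this is where the conditions that $P$ meets $C$ only at its terminus, and that $t$ is missing at the origin when $P$ is odd, are used $-$ and that after Phase~2 the residual defect at $v$ is undone \emph{precisely} by the reverse sweep of Phase~3, with every intermediate colouring proper and every transformation touching at most $q+1$ colour classes. Checking that Phase~3 remains proper, given that $C$ has meanwhile been flipped so that the colours incident to $v$ have changed, is the delicate point: it amounts to confirming that the only interaction between $P$ and $C$ occurs at $v$, where the flip of $C$ has vacated colour $t$ from $e_1$ in exactly the way the reverse path sweep needs.
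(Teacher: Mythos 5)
Your proposal is essentially the paper's proof: the paper runs the rotation algorithm of Lemma \ref{main:lemma1} on the single trail $W=P\cup C\cup\overleftarrow{P}$ (adding a pendant edge colored $t$ at the origin of $P$ when its first edge is not colored $t$), which is exactly your three phases, and the double traversal of $P$ confines the net change of the color-$t$ class to $E(C)$. The only refinement worth noting is that the paper does not literally ``undo'' Phase~1 but continues the same rotation along $\overleftarrow{P}$, which automatically resolves the junction bookkeeping at $v$ that you flag as the delicate point.
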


\begin{proof}
	Let  $P=u_0u_1 \dots u_l$, $l\geq 1$, and $C=v_0v_1 \dots v_{2m}$ where $v_{2m}=v_0=u_l$ and
	$v_0$ is the only common vertex of $C$ and $P$. 
Throughout we assume that $C$ and $P$ are oriented, 
	so that the edge $e_1=v_0v_1$ succeeding $v_0$ on $C$ is colored $t$. 
	
	The idea of the proof
	is to apply the algorithm from the proof of Lemma \ref{main:lemma1}, 
	referred to as algorithm $A$ henceforth, first along $P$, then 
	continue along     $C$,
	and then finally continue the process along $\overleftarrow P$, that is, the path $P$ where edges 
	are traversed in opposite order. 
	To describe this process in a unified way, we make some modifications in  algorithm $A$, and  apply 
	the obtained new algorithm, denoted algorithm $A'$, directly to the whole trail $P \cup C \cup \overleftarrow P$. 
	We consider two main cases:
	\medskip

	(i) {\it The first edge of $P$ is colored $t$.}

	Consider the trail 
	$$W=w_0e'_1w_1e'_2w_3 \dots e'_{2m+2l}w_{2m+2l}$$
	    with vertices $w_0, w_1,\dots,w_{2m+2l}$ and edges
	 $e'_1, e'_2, e'_3,\dots,e'_{2m+2l}$ where
 $w_i=u_i$ for $i=0,1,\dots,l$, $w_{l+i}=v_i$, for $i=0,1,\dots,2m$,  and 
	$w_{2m+2l-i}=u_i$, for $i=0,1,\dots,l$.  
	
	We denote by $s_0$ the color of the second edge of $P$.
	Let us describe some  differences  and  similarities between the  algorithms $A$ and $A'$  in some detail.

\begin{itemize}

     \item The first difference is that algorithm $A'$ processes $W$ rather than $C$, and thus
     the vertices from the set $V(P)\cup V(C)$ and the edges from the set $E(P)\cup E(C)$  in algorithm $A$
     will be replaced in $A'$ with vertices $w_0, w_1, \dots,w_{2m+2l}$ and edges $e'_1, e'_2,\dots, 
e'_{2m+2l}$, respectively.     
	
%
	On the other hand, for all considered subpaths of $W$ that we consider in algorithm $A'$, 
	we use the notation $P_i$ and $L_i$, just as in algorithm $A$.

	\item The start of $A'$ is similar to the start $A$: we construct a coloring  $f'_{0}$   by
interchanging the colors $s_0$ and $t$ along  the path $P_0$ on $W$ with origin $w_0$, and
by interchanging the colors $s_1$ and $t$ along the path $P_1$ on $W$ with origin $w_{2m_0}$.
($P_0$ and $P_1$ are paths on $W$ because  the edges of $C$ as well as of $W$ are colored by at least $q+2$ colors ($q\geq 3$)
in the initial coloring $f$.)

\item The second difference of $A'$ compared to the algorithm $A$  is when
	we enter the cycle $C$ of $W$.  
	Either $w_l$ is the terminus of a
	maximal 2-colored path $P_{i+1}$ (or $L_i$) considered by the algorithm $A'$,
	or $w_l$ is an internal vertex of such a maximal 2-colored 
	path $P_{i+1}$ (or $L_i$) that is considered by the algorithm. 
	In both cases $l$ is an even integer and the color of the edge $w_{l-1}w_l$ is distinct from $t$.

	{\bf Case 1.} {\em $w_l$ is the terminus of a 
	maximal 2-colored path $P_{i+1}$ considered by  algorithm $A$.}

	After considering $P_{i+1}$ as in  algorithm $A$,
  the algorithm $A'$ then continues with the next trail   
  $P_{i+2}$ (or $L_i$) which starts at $w_l$ and is contained in $C=w_lw_{l+1}\dots w_{l+2m}$. 
	More precisely, $P_{i+2}$ is either  a path in the cycle $C$ or the whole cycle 
itself.\footnote{It is possible that after transformations  involving the subpaths $P_0, P_1,\dots,P_{i+1}$  of the path $w_0w_1\dots,w_l$
	the cycle $C$  will be  colored with two colors.}
In the first case, we proceed exactly as described in the algorithm $A$.
	In the second case we interchange colors on the whole cycle $C$, and then take the next path $P_{i+3}$ (or $L_{i+1}$)
	to start at $w_{l+2m}=w_l= v_0$ and be contained in $\overleftarrow P$.

	{\bf Case 2.} {\em $w_l$ is an internal vertex of a maximal 2-colored path $P_{i+1}$
	that is considered by the algorithm.}

	In this case  the considered path $P_{i+1}$ ends at some vertex of $C$ which  is distinct  from $w_{l+2m}$. 
	This is evident if $i=0$ and 
	this follows for $i\geq 1$ from the fact that 
	the adjacent edges $w_{l-1}w_l$ and 
	$w_{l+2m-1}w_{l+2m}$  have distinct colors under every proper 
	$t$-coloring $f_j$ and every improper $t$-coloring $f'_j$ constructed before considering the path $P_{i+1}$.
	 
	 Since the terminus of $P_{i+1}$   is distinct 
	from $w_{l+2m}$, we proceed exactly as described in  algorithm $A$.  

	\item The third difference compared to the algorithm $A$  is when we exit the cycle $C$ of $W$.

	As in the preceding point, the vertex $w_{l+2m}=w_l$
	 might be the terminus of a $2$--colored cycle or path $P_{i+1}$ 
	(or $L_i$) that the algorithm considers.	
	In this case, we process $P_{i+1}$ as in  algorithm $A$ 
	and then continue with the next path $P_{i+2}$ (or $L_i$) along $W$ that has origin $w_{l+2m}$ and 
	is contained in $\overleftarrow P$.
	On the other hand, it might be the case that $w_{l+2m}$ is an internal vertex of some path 
	$P_{i+1}$ when leaving the cycle $C$, in which
	case we proceed exactly as described in  algorithm $A$. 
	
	\item At the final step the algorithm $A'$ is similar with algorithm $A$.

	\end{itemize}

	Apart from these differences, since we traverse vertices and edges exactly as in the algorithm
	$A$ in the proof of Lemma \ref{main:lemma1},
	except that we follow the trail $W$ rather than a path (and edges of $P$ are traversed twice),
	the same arguments as in that proof yield that we obtain the coloring $f_k$ which has the required properties.
	In particular, the sequence $f_0, f_1, \dots, f_k$ of proper $t$-colorings that the algorithm $A'$ produces satisfy
	that $f_{i+1}$ is obtained from  $f_{i}$ by a $(q+1)$-transformation.
	Since the edges of $P$ are traversed twice, the edges of $P$ that are colored $t$ under
	 the initial coloring $f$,	will also be colored $t$ in the final coloring $f_k$ after completing the algorithm $A'$. 
	Thus,   $M(f_k,t)=(M(f,t)\setminus  E(C))\cup (E(C)\setminus M(f,t))$, which implies that $M(f_k,t)$ and $M(f,t)$  
differ only on $C$ and $|M(f_k,t)\cap M(g,t)|>|M(f,t)\cap M(g,t)|$. By setting $\overline f=f_k$ 
we obtain the required coloring.

\bigskip

(ii) {\it The first edge of $P$ is colored with a color $s_0\not=t$.}

	If this holds, then we form a new graph $G'$ from $G$ by adding a new vertex $a_0$ to $G$ and joining
	it to $w_0$. Then $G' \in {\cal A}(q)$, and we color $a_0w_0$ by the color $t$, and treat this coloring
	of $G'$ as the initial coloring.
	Instead of considering the trail $W$ we apply the algorithm $A'$ from part (i)
	to the trail
	$$W' =a_0e'_0w_0e'_1w_1e'_2w_3 \dots e'_{2m+2l+1}w_{2m+2l+1}$$ in $G'$,
	where $w_{2m+2l+1}=a_0$ and $w_0,w_1, \dots, w_{2m+2l}$ are the same as in part (i).

	Thus we proceed exactly as in part (i) to obtain a required proper coloring $f_k$ of $G'$. 
	Then we simply
	remove the vertex $a_0$ and the edge $a_0w_0$ and take $\overline f$ as the restriction of $f_k$ of $G$. 
	This completes the proof of the lemma.
\end{proof}

\begin{corollary} 
\label{main:corollary} 
Let $G$ be a graph in $\cal A$$(q)$, $q\geq 3$, with maximum degree  $t\geq q+2$.
If $f$ and $g$ are two  different
proper $t$--colorings of  $G$ and  the subgraph $G(f,g,t)$
 has a component $C$  which is a cycle containing  a cutvertex of $G$,
 then  there is a proper $t$-coloring $\overline f$ of $G$ such that $\overline f$ is $(q+1)$-equivalent to $f$ 
and $|M(\overline f,t)\cap M(g,t)|>|M(f,t)\cap M(g,t)|.$
\end{corollary}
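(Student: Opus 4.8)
The plan is to split on the structure of the cycle $C$ and reduce to \cref{main:lemma3} or \cref{main:lemma4}. First I would record that, since $G(f,g,t)$ is induced by the symmetric difference of the two matchings $M(f,t)$ and $M(g,t)$, each of its vertices is incident to at most one edge coloured $t$ under $f$ and at most one edge of $M(g,t)$; hence every component has maximum degree $2$, and $C$ is an even, $t$-alternating cycle on which the edges of colour $t$ and the edges of colour $\neq t$ strictly alternate. If $C$ is coloured with at most $q+1$ colours, or if some vertex of $C$ has degree less than $t$ in $G$, then \cref{main:lemma3} applies verbatim and produces the required $\overline f$. So from now on I assume $C$ uses at least $q+2$ colours and every vertex of $C$ has degree $t$; this is exactly the configuration of \cref{main:lemma4}, and the only thing still missing is a suitable auxiliary path $P$.

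Next I would use the cutvertex to build $P$. Let $v\in V(C)$ be a cutvertex of $G$. Since $C$ is $2$-connected it lies in a single block $B_1$, and since $v$ is a cutvertex it lies in a second block $B_2$; thus $v$ has an edge leaving $B_1$. Because $v$ has degree $t$, its unique colour-$t$ edge is one of the two edges of $C$ at $v$, so every edge joining $v$ to $B_2$ has a colour $\neq t$. Orient $C$ so that the edge succeeding $v$ on $C$ is coloured $t$, and let $P$ be a longest $t$-alternating path with terminus $v$ whose edge at $v$ enters $B_2$. As $v$ is a cutvertex, once $P$ has left $v$ it stays in the $B_2$-branch and cannot return to $B_1\supseteq C$ without revisiting $v$; hence $C$ and $P$ meet only at $v$, the terminus of $P$, which is the first requirement of \cref{main:lemma4}.

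It then remains to verify that the origin $u_0$ of $P$ has degree less than $t$ and, when $P$ has odd length, that colour $t$ is missing at $u_0$. The key point is that along a $t$-alternating path the colours strictly alternate between $t$ and non-$t$, because no vertex carries two edges of colour $t$. Suppose $P$ ends at $u_0$ via a non-$t$ edge and that $u_0$ still has an edge $e$ of colour $t$. If the other end of $e$ is off $P$ we could prolong $P$, contradicting maximality; if it is a vertex $w$ of $P$, then $w$ already uses its unique colour-$t$ edge along $P$ when $w$ is internal, or $w=v$, whose colour-$t$ edge lies on $C$ and so cannot equal $e$ since $P\cap C=\{v\}$ — either way $w$ would be incident to two edges of colour $t$, which is impossible. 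Thus colour $t$ is missing at $u_0$, so $d_G(u_0)<t$, and moreover $P$ then has odd length with $t$ missing at its origin, exactly matching the third hypothesis of \cref{main:lemma4}. Applying \cref{main:lemma4} to $C$ and $P$ yields a proper $t$-colouring $\overline f$ that is $(q+1)$-equivalent to $f$, differs from $f$ only on $C$, and satisfies $|M(\overline f,t)\cap M(g,t)|>|M(f,t)\cap M(g,t)|$, as required.

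The step I expect to be the main obstacle is guaranteeing that a longest such path $P$ terminates via a non-$t$ edge, that is, ruling out the possibility that the longest $t$-alternating path ends with a colour-$t$ edge at a full-degree vertex all of whose non-$t$ neighbours already lie on $P$. In that ``stuck'' situation the clean uniqueness argument above breaks down, and one must either perform a rotation/rerouting of $P$ to reach an odd-ending path, or argue from the block structure of the $B_2$-branch that such a dead end cannot arise. Everything else is bookkeeping layered on top of \cref{main:lemma3,main:lemma4}.
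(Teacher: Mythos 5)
Your reduction to the case where $C$ uses at least $q+2$ colours and all its vertices have degree $t$, and your use of the cutvertex to produce an auxiliary path meeting $C$ only at $v$, both match the paper's proof of \cref{main:corollary}. But there is a genuine gap exactly where you suspected it: you take $P$ to be a \emph{longest $t$-alternating path} ending at $v$, and a $t$-alternating path may use many colours other than $t$. Your argument in the third paragraph only covers the case where $P$ reaches its origin $u_0$ via a non-$t$ edge; if instead every longest such path arrives at its far end via a colour-$t$ edge at a vertex $u_0$ of degree $t$ all of whose non-$t$ neighbours already lie on $P$, then $P$ cannot be extended, yet $d_G(u_0)=t$, and the hypotheses of \cref{main:lemma4} (origin of degree less than $t$; colour $t$ missing at the origin when the length is odd) cannot be verified. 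Nothing in your write-up rules this configuration out, and no rotation/rerouting argument is supplied, so the proof as it stands is incomplete.

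The paper closes this gap by using a \emph{two-coloured} path rather than a general $t$-alternating one. Since $v$ is a cutvertex, $d_B(v)<t$ where $B$ is the block containing $C$, so some colour $c_0\neq t$ is missing at $v$ inside $B$; as $d_G(v)=t$, the (unique) edge $vu_1$ of colour $c_0$ lies in another block. One then takes $P$ to be the maximal $(c_0,t)$-coloured path with first edge $vu_1$. Such a path is unique, cannot return to $v$ or meet $C$ again (the same cutvertex argument you give), and --- this is the point your version is missing --- its far endpoint $x$ necessarily misses one of the two colours $c_0,t$, so $d_G(x)<t$ automatically; moreover the parity works out for free, since the path has odd length precisely when its edge at $x$ is coloured $c_0$, in which case $t$ is missing at $x$. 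Replacing ``longest $t$-alternating path'' by ``maximal $(c_0,t)$-coloured path'' in your second paragraph repairs the argument and makes it essentially identical to the paper's.
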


\begin{proof}
Let $C=v_0e_1v_1e_2\dots v_{2m-1}e_{2m}v_{2m}$, where $v_{2m}=v_0$, 
and  assume that $v_0$ is a cutvertex. 
 We can also assume that the edges of $C$ are colored with at least $q+2$ colors and all vertices on $C$ have the maximum degree in $G$, because otherwise the result follows from Lemma \ref{main:lemma3}.

Let $B$ denote a 2-connected block of $G$ containing $C$. 
Since $v_0$ is a cutvertex, 
$d_B(v_0)<t$, and so
there is a color $c_0\not=t$ that is missing in $B$ at $v_0$. 
Since $d_G(v_0)=t$, there is another block, say $B_1$,
containing an edge $v_0u_1$ of color $c_0$. Consider in $B_1$ a (unique) maximal $(c_0,t)$-colored path with origin $v_0$
and with first edge $v_0u_1$, denoted  $P$. Then the cycle $C$ and the path $P$ satisfies the conditions of Lemma 
\ref{main:lemma4}, and so the result follows from Lemma \ref{main:lemma4}. 
\end{proof}

Next, we consider a special type of ear decomposition that will prove useful.
An {\em ear} of a graph $H$ is a 
path whose endpoints are in $H$, and whose internal vertices have degree 2.

Let $B$ be a graph properly edge-colored with colors $1,2,\dots,t$, where $t=\Delta(B)$, and $C$ be a $t$-alternating  cycle in $B$.
We say that a subgraph $H$ of $B$ is a {\em $t$-alternating $(\text{ear},C)$-subgraph} 
if $H= P_0\cup \dots \cup P_{n}$
for some $n\geq 0$,
where $P_0=C$, and 
for $n \geq 1$, $P_i$ is a $(c_i,t)$-colored
path which is an ear of the graph  $ P_0\cup P_1\cup \dots \cup P_{i}$, for $i=1,\dots,n$,
where $c_i\leq t-1$.
We also say that $P_0,\dots,P_n$ is a  {\em $t$-alternating ear decomposition} of $H$
 with respect to the cycle $C$.

For a vertex $v$  in a $t$-alternating $(\text{ear},C)$-subgraph $H$, {\em a  $(C,v)$-path} 
 is a path $Q$ in $H$ from a vertex of $C$ to the vertex $v$ such that $|V(Q)\cap V(C)|=1$.

\begin{lemma}
\label{lem:ear1}
	Let $B$ be a non-regular $2$-connected properly $t$-colored graph, where $t=\Delta(B)$. 
	If $C$ is a cycle of $G$ where all vertices
	have degree $t$, then there is
	a $t$-alternating $(\text{ear},C)$-subgraph $H=P_0\cup...\cup P_n$  of $B$ with $P_0=C$ and $n\geq 0$,
	such that either
\begin{itemize}

	\item[(i)] $H$ contains a vertex $x$ of degree less than $t$ in $B$, or

	\item[(ii)] there is a $t$-alternating path $P_{n+1}$ from a vertex $y$ of $H$ to a vertex $x$ of degree
	less than $t$, such that $y$ is the only common vertex of $P_{n+1}$ and $H$, and $P_{n+1}$ contains the edge colored $t$ incident with $x$,
	if such an edge exists.
\end{itemize}
\end{lemma}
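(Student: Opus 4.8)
The plan is to take $H$ to be a maximal $t$-alternating $(\text{ear},C)$-subgraph of $B$ (say, one with the largest number of edges; at least one exists since $C$ itself qualifies, with $n=0$). If $H$ already contains a vertex of degree less than $t$ in $B$, then (i) holds and we are done, so assume every vertex of $H$ has degree $t$ in $B$ and aim for (ii).

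The first key step is a structural claim: in any $t$-alternating $(\text{ear},C)$-subgraph, every vertex is incident with an edge of color $t$ belonging to the subgraph (call such a vertex \emph{$t$-saturated}). I would prove this by induction on the ear decomposition. The base case $P_0=C$ holds because $C$ is $t$-alternating, so its $t$-colored edges form a perfect matching of $V(C)$. For the inductive step, when a $(c_i,t)$-colored ear $P_i$ is attached, each internal vertex meets one $c_i$- and one $t$-edge of $P_i$ and is thus $t$-saturated; the two endpoints of $P_i$ already lie in $P_0\cup\dots\cup P_{i-1}$ and are $t$-saturated by the induction hypothesis, so (by properness) the end-edges of $P_i$ cannot be colored $t$ and the endpoints remain saturated. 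Two consequences I would record: every vertex of $H$ has its unique $t$-edge inside $H$, and hence every edge of $B$ leaving $V(H)$ (a boundary edge) is colored with some $c<t$.

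The second key step uses maximality. Since $B$ is non-regular it has a vertex of degree less than $t$; as all vertices of $H$ have degree $t$, that vertex lies outside $H$, so $V(H)\subsetneq V(B)$ and, $B$ being connected, there is a boundary edge $e=yu$ with $y\in V(H)$, $u\notin V(H)$, necessarily colored some $c<t$. I would then let $Q$ be the maximal $(c,t)$-alternating path starting at $y$ along $e$, and set $P_{n+1}=Q$. The crucial point is that $Q$ meets $H$ only at $y$: if $Q$ first returned to $H$ at a vertex $u_j$, the subpath $y\,u_1\dots u_j$ would be a $(c,t)$-colored ear of $H$ (endpoints in $H$, new internal vertices of degree $2$), enlarging $H$ and contradicting maximality; and $Q$ cannot close up at $y$, since the only edge by which it could re-enter $y$ is $y$'s $t$-edge, whose other endpoint lies in $H$ and would again be a forbidden re-entry. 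Finally, analysing where a maximal $(c,t)$-path can stop shows its far endpoint $x$ must miss $c$ or $t$, hence has degree less than $t$; moreover, if $x$ is reached along its $t$-edge then that edge lies on $Q$, while if $x$ is reached along a $c$-edge then $x$ has no $t$-edge at all, so $Q$ contains the $t$-edge incident with $x$ whenever one exists. This yields exactly (ii).

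The main obstacle is the interplay between the two ways $Q$ could fail to be a genuine pendant path reaching a low-degree vertex: re-entering $H$ at another vertex, and looping back to its own origin $y$. The saturation claim is what tames both at once, because it forces every boundary edge to have color $c<t$ (so the exploration really does begin with a $c$-edge) and it rules out the closed-loop case, while maximality disposes of re-entry elsewhere. I expect establishing the saturation claim and then checking the degenerate short cases (a single-edge $P_{n+1}$ and the precise stopping conditions of the alternating path) to be the fiddly parts, but none of these should present real difficulty.
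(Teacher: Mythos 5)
Your proposal is correct and takes essentially the same route as the paper: choose an edge-maximal $t$-alternating $(\text{ear},C)$-subgraph $H$, use non-regularity and connectivity of $B$ to find an edge leaving $H$, and follow a maximal $2$-colored path which, by maximality of $H$, cannot re-enter $H$ and must therefore terminate at a vertex of degree less than $t$ (with the $t$-edge condition holding by maximality of the path). The only difference is presentational: your explicit ``$t$-saturation'' claim, which guarantees that every boundary edge has color other than $t$ and rules out the path closing back at $y$, is left implicit in the paper's much terser argument.
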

\begin{proof}
Let $H = P_0 \cup P_1 \cup \dots \cup P_{n}$  be an edge-maximal $t$-alternating 
	$(\text{ear},C)$-subgraph of $B$ where $P_0=C$ and $n\geq 0$. We will show that if the condition (i) does not hold, 
	  then the condition (ii)  must hold. Assume that all vertices in $H$
	have degree  $t$ in $B$. The  subgraph of $B$ induced by $V(H)$ is not $t$-regular, 
	because $B$ is non-regular and connected.
	Thus there is some edge $e \in E(B) \setminus E(H)$ that is incident with a vertex of $H$. 
	Suppose that $e$ is colored $c_1$, and let $P_{n+1}$ be a
	maximal $(c_1,t)$-colored path containing $e$, whose internal vertices are not contained in $H$. The maximality of $H$
	implies that 
	$P_{n+1}$ ends at some vertex $x$ with $d_B(x)<t$ not contained in $H$.
	Clearly, $P_{n+1}$ contains the edge colored $t$ incident with $x$,
	if such an edge exists.
\end{proof}

Next, we have the following.

\begin{lemma}
\label{lem:ear2}
	Let  $H_n$ be a graph and $C, P_1,\dots, P_n$ be  a $t$-alternating ear decomposition  of $H_n$
	with respect to the cycle $C$, $n\geq 1$.
	Then for every vertex $v$ of $P_1\cup \dots \cup P_n$, 
	there is a $t$-alternating  $(C,v)$-path  $F$ in $H_n$ such that the
	edge colored $t$ that is incident with $v$ is contained in $F$.
\end{lemma}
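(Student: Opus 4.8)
The plan is to argue by induction on the number $n$ of ears, after first isolating a structural fact that makes everything go through: in any $t$-alternating $(\text{ear},C)$-subgraph, every vertex is incident with \emph{exactly one} edge of colour $t$, and gluing on a new ear never disturbs this edge at an already-present vertex. Indeed, $C$ is $t$-alternating, so in a proper colouring its two colours strictly alternate and each vertex of $C$ meets exactly one $t$-edge. When an ear $P_i$ (a $(c_i,t)$-coloured path) is attached, each new internal vertex has degree $2$ with its two edges coloured $c_i$ and $t$, hence exactly one $t$-edge; and at an endpoint of $P_i$, which already carries a $t$-edge, properness forces the incident ear-edge to be coloured $c_i\neq t$. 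Consequently every ear $P_i$ \emph{begins and ends with a $c_i$-coloured edge}, and the unique $t$-edge at an old vertex is untouched. I would record this as a short preliminary claim, since it is used at every step.

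Next I would set up the induction, the substantive cases being vertices $v$ lying off $C$ (a vertex on $C$ being the trivial remaining case). For the base $n=1$, an internal vertex $v$ of the $(c_1,t)$-coloured ear $P_1$ has both endpoints of $P_1$ on $C$; I take $F$ to be the sub-path of $P_1$ running from whichever endpoint lies on the same side as the $t$-edge at $v$. Being a sub-path of a $2$-coloured path, $F$ is $t$-alternating; it meets $C$ only at its starting endpoint; and it terminates with the $t$-edge at $v$.

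For the inductive step write $H_n=H_{n-1}\cup P_n$ with $P_n=b_0b_1\cdots b_s$ and $b_0,b_s\in V(H_{n-1})$. If $v\in V(P_1\cup\cdots\cup P_{n-1})$, the induction hypothesis supplies the path inside $H_{n-1}\subseteq H_n$, and by the preliminary claim the $t$-edge at $v$ is the same in $H_n$. If $v=b_j$ is a new internal vertex, I first reach an endpoint of $P_n$: the $t$-edge at $v$ points toward exactly one of $b_0,b_s$, say $b_0$; if $b_0\in V(C)$ I start the path at $b_0$ itself, otherwise I invoke the induction hypothesis to get a $t$-alternating $(C,b_0)$-path, which (being a $(C,b_0)$-path containing the $t$-edge at $b_0$) necessarily ends in that edge. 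I then concatenate with the sub-path $b_0b_1\cdots b_j$ of $P_n$. Since the interior vertices of $P_n$ are new, they meet $H_{n-1}$ only at $b_0$, so the concatenation is a genuine path meeting $C$ in exactly one vertex and ending with the $t$-edge at $v$.

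The main point to verify — and where the preliminary claim does the real work — is that the concatenation is $t$-alternating at the splicing vertex $b_0$: the incoming edge is the $t$-edge at $b_0$, while the first edge of $P_n$ at $b_0$ is coloured $c_n\neq t$, so the two consecutive edges there satisfy the alternation condition, and the remainder is $t$-alternating because $P_n$ is $2$-coloured. The only delicate bookkeeping is the choice of endpoint ($b_0$ versus $b_s$) ensuring that the traversed sub-path of $P_n$ terminates on the $t$-edge at $v$ rather than on a $c_n$-edge; this parity is forced precisely by the fact that each ear begins and ends with its non-$t$ colour.
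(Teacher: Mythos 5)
Your proof is correct and follows essentially the same route as the paper's: induction on the number of ears, reducing a new internal vertex $v$ of the last ear to the endpoint toward which the $t$-edge at $v$ points, and splicing the inductively obtained $(C,\cdot)$-path with the appropriate subpath of that ear. Your preliminary claim (each vertex meets exactly one $t$-edge and each ear begins and ends with its non-$t$ colour) merely makes explicit the parity and alternation facts the paper uses implicitly.
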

\begin{proof}
	The proof is by induction on the number of ears $n$.  The result is trivial if $n=1$, that is, $H_1$ has only one ear.

Assume that the proposition of the lemma is true for all graphs permitting a $t$-alternating ear decomposition with $k$ ears,
 for some $k\geq 1$.
 Consider a graph $H_{k+1}$ that has a  $t$-alternating ear decomposition $C, P_1,\dots, P_{k+1}$ with respect to the cycle $C$.
Let $v$ be a vertex in  $P_1\cup \dots \cup P_{k+1}$ and let $H_k=C\cup P_1\cup \dots \cup P_k$. 
Then  $C, P_1, \dots ,P_k$ is a $t$-alternating ear decomposition  of $H_k$ with respect to $C$.
 
 If $v$  belongs to $P_1\cup \dots \cup P_k$, then, by the induction hypothesis, there is a $t$-alternating  $(C,v)$-path  $F$ in $H_k$
 (and, therefore, in $H_{k+1}$) such that the edge colored $t$ that is incident with $v$ is contained in $F$.
 
If $v$  does not belong to $P_1\cup \dots \cup P_k$, then $v$ is an internal vertex of $P_{k+1}$. 
Let  $z_1, z_2$ be the origin and terminus of $P_{k+1}$, 
respectively. 
Denote by $D_i$  the subpath of $P_{k+1}$ from $z_i$ to $v$, $i=1,2$. One of the edges on $P_{k+1}$ incident to $v$ is colored $t$;
without loss of generality we assume that this edge belongs to $D_1$.

If $z_1\in V(C)$ then $F=D_1$ is the required $(C,v)$-path. 
If, on the other hand, $z_1\notin V(C)$, then let $z_1y_1$ be the edge colored $t$
that is incident to $z_1$. By the induction hypothesis, there is a $t$-alternating $(C, z_1)$-path 
$Q$ in $P_1\cup \dots \cup P_k$
containing the edge $z_1y_1$. Thus $Q D_1$ is the required $t$-alternating $(C,v)$-path.
\end{proof}

\section{Main results and proofs}

Using the lemmas from the preceding section, we shall prove \cref{main:th}.

\begin{proof}[Proof of Theorem \ref{main:th}]
Let $\cal B$$(q)$  denote the set of all Class 1 graphs
with maximum degree at least $5$ where every block is either a bipartite graph,
or a $(q+1)$-degenerate graph from the set
$\cal A$$(q)$. 
The proof is  by induction on the maximum degree $\Delta(G)$.  The result is evident if $\Delta(G) \leq q+1$.

Now suppose that 
$G$ is a graph  in $\cal B$$(q)$ with maximum degree $t\geq q+2$ and that the induction
hypothesis holds for all    graphs in $\cal B$$(q)$ with maximum degree $t-1$.
Clearly, $G\in \cal A$$(q)$, since a bipartite graph 
belongs to $\cal A$$(q)$, for every $q\geq 3$.

Let $f$ and $g$ be two distinct proper $t$--colorings of $G$.

\medskip

{\bf Case 1}. $M(f,t)=M(g,t)$:

In this case  the graph $G'=G-M(f,t)$ is a graph  in $\cal B$$(q)$ with maximum degree $t-1$. Let
$f'$ and $g'$ be the two distinct proper $(t-1)$--colorings of
$G'$ induced by $f$ and $g$, respectively.  Then, since by the
induction hypothesis $\chi'_{trans}(G')\leq q+1$, the inequality $\chi'_{trans}(G)\leq q+1$ must be true, too.
\smallskip

{\bf Case 2}:
$M(f,t)\neq M(g,t)$. 

 Since $G(f,g,t)$ has maximum degree at most two,
every component  of the subgraph $G(f,g,t)$ is either a cycle, or a path. We consider some different cases.

If a component $C$ in $G(f,g,t)$ is a path, then we apply Lemma \ref{main:lemma1} or \ref{main:lemma2}
  to obtain two  proper  $t$-colorings $\overline f$  and $\overline g$ 
such that    
$\overline f$ is $(q+1)$-equivalent to $f$, $\overline g$ is $(q+1)$-equivalent to $g$ and 
$|M(\overline f,t)\cap M(\overline g,t)|>|M(f,t)\cap M(g,t)|$. 

If, on the other hand, $G(f,g,t)$ contains a cycle colored with at most $q+1$ colors, 
or containing 
 a vertex with degree less than $t$, then, by Lemma \ref{main:lemma3}  
there is a  proper $t$-colorings  $\overline f$ of $G$ such that  $\overline f$ is $(q+1)$-equivalent to $f$ and 
$|M(\overline f,t)\cap M(g,t)|>|M(f,t)\cap M(g,t)|$.

Suppose now instead that a  component $C$
of the subgraph $G(f,g,t)$ is a cycle  where the edges are colored with at 
least $q+2$ colors and all vertices have the degree $t$ in $G$.
If $C$ contains a cutvertex of $G$, then, by Corollary \ref{main:corollary}, 
there is a  proper  $t$-coloring $\overline f$  of $G$
as in the preceding paragraph. 
If, on the other hand, $C$ does not  contain a cutvertex of $G$, then 
let $B$ be the 2-connected block of $G$ containing $C$. We consider two subcases.
\smallskip

{\bf Subcase 2.1}. {\em $B$ is a $(q+1)$-degenerate graph from the set $\cal A$$(q)$.}

The conditions imply that $B$ is not $t$-regular and contains a vertex of degree less than $t$ in $V(B)\setminus V(C)$.
We will show that there is 
 a $t$-alternating path $Q$ in $B$ 
 satisfying the following three conditions:  the origin of $Q$ is the unique common vertex with $C$,  
 the degree of the terminus of $Q$ is less than $t$, and 
 if $Q$ has an odd length, then the color $t$ is missing at  the terminus of $Q$.

It follows from Lemma \ref{lem:ear1} that there is a $t$-alternating (ear, $C$)-subgraph $H = C\cup P_1 \cup \dots \cup P_n$ in $B$
such that either
\begin{itemize}

	\item[(i)] $H$ contains a vertex $x$ of degree less than $t$, or

	\item[(ii)] there is a $t$-alternating path $P_{n+1}$ from a vertex $y$ of $P_n$ to a vertex $x$ of degree
	less than $t$, such that $y$ is the only common vertex of $P_{n+1}$ and $H$, and $P_{n+1}$ contains the edge colored $t$ incident with $x$,
	if such an edge exists.

\end{itemize}

If (i) holds, then it follows directly from Lemma \ref{lem:ear2} that there is a
$t$-alternating $(C,x)$-path $Q$ in $H$ and, therefore, in $B$. 
If (ii) holds,
then we can apply Lemma \ref{lem:ear2} to the vertex $y$ to obtain  a $(C,y)$-path $Q'$ in $H$, 
and  then add $P_{n+1}$ to $Q'$ to obtain the required $(C,x)$-path $Q$.

Thus, in both cases we obtain a $t$-alternating path $P =  \overleftarrow Q$ satisfying the conditions of Lemma \ref{main:lemma4}:
 $C$ and $P$ have only one common vertex which is the terminus of $P$,
 the origin of $P$ has degree less than $t$, and
	if $P$ has  odd length, then the color $t$ is missing at the origin of $P$. 

Hence, by Lemma \ref{main:lemma4}, 
there is a   proper  $t$-coloring $\overline f$   of $B$ 
such that    
$\overline f$ is $(q+1)$-equivalent to $f$,
$|M(\overline f,t)\cap M(g,t)|>|M(f,t)\cap M(g,t)|$, and $M(\overline f,t)$ and $M(f,t)$ differ only on $C$.

\smallskip

{\bf Subcase 2.2}. {\em $B$ is a bipartite graph.}

If $B$ has a vertex of degree less than $t$, then it is not regular. In the same way
as in the Subcase 2.1, one can show that there is a proper  $t$-coloring $\overline f$   of $B$ 
such that    
$\overline f$ is $(q+1)$-equivalent to $f$,
$|M(\overline f,t)\cap M(g,t)|>|M(f,t)\cap M(g,t)|$, and $M(\overline f,t)$ and $M(f,t)$ differ only on $C$. 

If $B$ is a $t$-regular graph, then 
the existence of a  proper  $t$-coloring $\overline f$   as in the preceding paragraph
follows from \cref{th:bipartite}.
\bigskip

We have thus proved that if $M(f,t)\not=M(g,t)$, then in all cases we can construct 
two  proper  $t$-colorings $\overline f$  and $\overline g$ of $G$
such that    
$\overline f$ is $4$-equivalent to $f$, $\overline g$ is $4$-equivalent to $g$ and 
$|M(\overline f,t)\cap M(\overline g,t)| > |M(f,t)\cap M(g,t)|$. 
%
This implies that there exist two  proper $t$-colorings $f^*$ and $g^*$ of $G$ such that 
$f^*$ is 
$4$-equivalent to $f$, $g^*$ is $4$-equivalent to $g$ and  $M(f^*,t)=M(g^*,t)$.
Then, as in the Case 1, $\chi'_{trans}(G)\leq 4$. 
\end{proof}

Let us state some conseqences of Theorem \ref{main:th}.

\begin{corollary}
\label{th:B}
Let $G$ be a  graph with  $\Delta(G)\geq 4$, where every block is either a bipartite graph,
a chordless graph,  a series-parallel graph,  a wheel graph, 
or a planar graph of girth at least 7.   
 Then $\chi'_{trans}(G)\leq 4$.
\end{corollary}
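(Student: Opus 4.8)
The plan is to derive \cref{th:B} directly from \cref{main:th} by checking that, with $q=3$, every graph $G$ meeting the hypotheses belongs to the class $\mathcal{B}(3)$, so that the bound $\chi'_{trans}(G)\leq q+1 = 4$ applies. First I would handle a bookkeeping mismatch: \cref{main:th} requires $\Delta(G)\geq 5$, whereas the corollary only assumes $\Delta(G)\geq 4$. So I would split into the case $\Delta(G)\geq 5$, where \cref{main:th} applies once the block hypotheses are verified, and the boundary case $\Delta(G)=4$. For the latter I expect the paper has a separate device (for instance, the trick of padding $G$ by adding a vertex of high degree, or a direct base-case argument), so here I would either invoke such a device or observe that the lemmas of Section~3 already cover $t=\Delta(G)\in\{q+2,\dots\}$ starting from $q+2 = 5$; the honest gap at $\Delta(G)=4$ with $q=3$ needs a pointer to a separate base case.

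The heart of the proof is verifying the two structural requirements on each block $B$ of $G$, namely that $B\in\mathcal{A}(3)$ and that $B$ is $4$-degenerate (i.e. $(q+1)$-degenerate with $q=3$). For membership in $\mathcal{A}(3)$ I would cite the summary already established in the excerpt: the discussion following \cref{prop:series} states that $\mathcal{A}(3)$ contains all chordless graphs, all outerplanar and series-parallel graphs, and all planar graphs of girth at least $7$, and \cref{th:bipartite}'s surrounding Proposition~2.9 shows $W_n\in\mathcal{A}(3)$ for $n\geq 5$. Bipartite blocks lie in $\mathcal{A}(q)$ for every $q\geq 3$ by the remark used inside the proof of \cref{main:th}. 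For the $4$-degeneracy of each non-bipartite block type I would argue type by type: series-parallel graphs (hence outerplanar graphs) always contain a vertex of degree at most $2$ by \cref{prop:series}, and this property is hereditary, so they are $2$-degenerate, a fortiori $4$-degenerate; chordless graphs are $2$-degenerate because every $2$-connected chordless subgraph has a degree-$2$ vertex by \cref{plummer} and any non-$2$-connected piece is handled inductively through a cutvertex; planar graphs of girth at least $7$ are sparse enough (Euler's formula gives average degree below $14/5 < 3$, so they contain a vertex of degree at most $2$, hence are $2$-degenerate); and a wheel graph $W_n$ has its rim vertices of degree $3$, so it is $3$-degenerate, which is within the $4$-degenerate bound. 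Each of these facts is either stated in Section~2 or follows from a one-line Euler-formula count analogous to the proof of \cref{girthplanar2}.

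Once both conditions are confirmed for every admissible block type, the graph $G$ lies in $\mathcal{B}(3)$, and I would conclude by applying \cref{main:th} with $q=3$ to obtain $\chi'_{trans}(G)\leq 4$. I expect the main obstacle to be not the structural verifications themselves—these are short citations to Section~2—but rather the degeneracy checks: the definition of $\mathcal{A}(q)$ only concerns proper colorability of bounded-degree subgraphs, whereas \cref{main:th} additionally demands $(q+1)$-degeneracy, and one must be careful that each block type genuinely satisfies this hereditary sparsity condition rather than merely lying in $\mathcal{A}(3)$. The wheel case is the tightest, being exactly $3$-degenerate, and confirming that $3\leq q+1 = 4$ is what keeps it admissible. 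A secondary subtlety is the reconciliation of the degree hypotheses ($\Delta\geq 4$ in the corollary versus $\Delta\geq 5$ in \cref{main:th}), which I would resolve by pointing to whatever base case the authors have prepared for $\Delta(G)=4$.
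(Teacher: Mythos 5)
Your proposal follows the same route as the paper: for $\Delta(G)\geq 5$ one verifies that every admissible block type is a $4$-degenerate graph in $\mathcal{A}(3)$ (bipartite blocks being permitted directly by the hypothesis of \cref{main:th}) and then applies \cref{main:th} with $q=3$; your type-by-type degeneracy checks are correct and are in fact more explicit than the paper's one-line appeal to Propositions 2.2--2.8. The one point you leave open, the case $\Delta(G)=4$, requires no padding trick or separate base case: there $\chi'(G)=\Delta(G)=4$, so a proper $\chi'(G)$-coloring has only four color classes in total, and any two such colorings trivially differ by at most four classes, hence are $4$-equivalent via a single $4$-transformation. This is why the paper dismisses that case as evident, and with this observation your argument is complete.
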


\begin{proof}
The bound is evident if $\Delta(G)=4$. If $\Delta(G)\geq 5$,
  Propositions 2.2--2.8 imply that  
  all  chordless graphs,  series-parallel graphs,   wheel graphs and  planar graphs of girth at least 7
 are $4$-degenerate graphs in the set $\cal A$$(3)$.  
So the corollary follows from  \cref{main:th} under $q=3$.
 \end{proof}

\begin{figure}
\centering
\includegraphics[width=10cm]{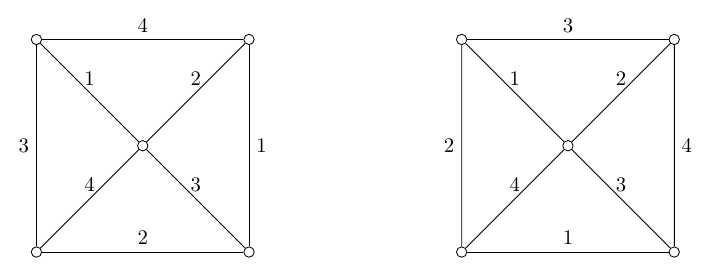}
\caption{A Halin graph $H$ with $\chi'_{trans}(H)=4$.}
\end{figure}

\begin{corollary}

(i) If $G$ is an arbitrary planar graph, then $\chi'_{trans}(G)\leq 8$.
\smallskip

(ii) If $G$ is a Halin graph,  then $\chi'_{trans}(G)\leq 5$,
\smallskip

(iii) If $G$   is either a series-parallel graph, a  chordless graph, 
or a planar graph of girth at least 7,
then $\chi'_{trans}(G)\leq 4$.
\end{corollary}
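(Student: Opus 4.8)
The plan is to derive each of the three bounds as a direct consequence of \cref{main:th} (or \cref{th:B}) by verifying, for each graph class, the degeneracy and $\mathcal{A}(q)$ membership hypotheses for an appropriate value of $q$. First I would dispose of the small-degree cases: whenever $\Delta(G)\leq q+1$ the inequality $\chi'_{trans}(G)\leq q+1$ holds trivially (as noted at the start of the proof of \cref{main:th}), so in each part I may assume $\Delta(G)$ is large enough that the inductive machinery applies, and separately confirm that every graph in the class is Class~1 so that $\chi'(G)=\Delta(G)$ and the theorem is applicable.

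For part (iii) the work is already done: series-parallel graphs, chordless graphs, and planar graphs of girth at least~$7$ are all Class~1 with $\Delta\geq 4$ by \cref{plummer,machado,prop:series}, and by the discussion following the propositions in Section~2 each such graph is a $4$-degenerate member of $\mathcal{A}(3)$. Since every block of a graph in one of these classes is again a graph of the same type (blocks are subgraphs, and $\mathcal{A}(3)$ is closed under taking subgraphs), the whole graph lies in $\mathcal{B}(3)$, and \cref{th:B} with $q=3$ gives $\chi'_{trans}(G)\leq 4$. For part (ii), Halin graphs satisfy $\chi'(G)=\Delta(G)$ by \cref{halin} whenever $\Delta\geq 4$, and by the remark following \cref{fournier} together with the Section~2 discussion, every Halin graph is a $5$-degenerate graph in $\mathcal{A}(4)$. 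Applying \cref{main:th} with $q=4$ then yields $\chi'_{trans}(G)\leq 5$.

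Part (i) is the one requiring the largest $q$: by the result of Sanders and Zhao cited in Section~2, every planar graph lies in $\mathcal{A}(7)$, and planar graphs are $7$-degenerate (indeed planar graphs are $5$-degenerate). Thus a planar graph with $\Delta\geq 9$ is a $8$-degenerate member of $\mathcal{A}(7)$, so \cref{main:th} with $q=7$ gives $\chi'_{trans}(G)\leq 8$. The main obstacle I anticipate is purely one of hypothesis-matching rather than of genuine mathematical difficulty: \cref{main:th} requires $G$ to be \emph{Class~1} with $\Delta(G)\geq 5$, whereas an arbitrary planar graph may be Class~2 or have small maximum degree. To handle this cleanly I would invoke the reduction (\cref{prop:Class1}) that lets one restrict attention to Class~1 graphs, and absorb the remaining small-$\Delta$ planar graphs into the trivial base case $\Delta\leq q+1=8$. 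Once the Class~1 and degree conditions are secured, the bound follows immediately from the theorem, so the only care needed is to state these side conditions explicitly and confirm that each block of the planar graph is itself planar (hence in $\mathcal{A}(7)$ and $8$-degenerate), which holds since blocks are subgraphs of $G$.
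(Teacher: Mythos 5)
Your treatment of parts (ii) and (iii) matches the paper's: (iii) is exactly \cref{th:B} (the $q=3$ case of \cref{main:th}), and (ii) is \cref{main:th} with $q=4$ once one notes Halin graphs are ($3$-, hence $5$-) degenerate members of $\mathcal{A}(4)$ and, via \cref{halin}, Class~1 whenever $\Delta\geq 4$. For part (i) you correctly identify the hypothesis-matching issue (Class~1 and $\Delta\geq 5$), but the fix you propose does not work. \cref{prop:Class1} is only an equivalence about the existence of a \emph{universal} constant over \emph{all} graphs versus all Class~1 graphs; it is not a class-preserving reduction. Its gadget $F_G$ contains, for each edge, a copy of $K_{t,t}$ minus an edge, which for $t\geq 3$ (and certainly for $t\geq 9$) is non-planar, so $F_G$ is not planar and the bound $\chi'_{trans}(G)\leq\chi'_{trans}(F_G)$ tells you nothing unless you have already proved the bound $8$ for the non-planar graph $F_G$. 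As written, your argument for a Class~2 planar graph with large maximum degree has a genuine gap.

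The resolution (implicit in the paper's own proof) is that the Class~1 hypothesis is automatic in the only nontrivial range: by Vizing's planarity result and Sanders--Zhao \cite{SandersZhao}, every planar graph with $\Delta\geq 7$ is Class~1, so a planar graph with $\Delta(G)\geq 9$ satisfies all hypotheses of \cref{main:th} with $q=7$ (being $5$-degenerate and in $\mathcal{A}(7)$), giving $\chi'_{trans}(G)\leq 8$. For $\Delta(G)\leq 8$ one has $\chi'(G)\leq 8$ in every case (if $G$ were Class~2 with $\Delta=8$ it would contradict Vizing's planarity theorem, and otherwise $\chi'(G)\leq\Delta(G)+1\leq 8$), so any two proper $\chi'(G)$-colorings differ in at most $8$ color classes and a single $8$-transformation suffices. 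Replacing your appeal to \cref{prop:Class1} with this observation makes your proof coincide with the paper's.
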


\begin{proof} 
 The bound  in (i) is evident for planar graphs with maximum degree at most $8$.
Let $G$ be a planar graphs with maximum degree at least $9$.
By the result of Sanders and Zhao [25], $G\in \cal A$$(7)$. The graph $G$ is $5$-degenerate,
since every subgraph of $G$ contains a vertex  of degree at most 5. Then, by Theorem \ref{main:th} 
(under $q=7$), $\chi'_{trans}(G)\leq 8$.

The bound in (ii) follows from Theorem \ref{main:th} under $q=4$, because $G$ is a $3$-degenerate graph in $\cal A$$(4)$.

The bound in (iii) follows from Corollary  4.1.
\end{proof}

The next example shows that there are Halin graphs $H$ with $\chi'_{trans}(H)=4$ and $H\notin \cal A$$(3)$.

\begin{example}
\label{example2}
Consider the  Halin graph $H$ in Figure 1 with
	two different proper $4$-edge colorings $f$ 
	(to the left) and $g$ 
	(to the right).
	It is not difficult to verify that a
	subgraph $H(t_1,t_2,t_3)$ induced by the  set of
	edges $M(f,t_1)\cup M(f,t_2)\cup M(f,t_3)$ 
	gives the same partition of  edges of
	$H(t_1,t_2,t_3)$ into  matchings, for any $1\le t_1 < t_2 < 
	t_3\le 4$.
	Hence $g$ cannot be  obtained from $f$ even by a sequence of
	transformations each of which uses  at most three color classes, that is, $\chi'_{trans}(H)=4$.
	Note further that $H\notin \cal A$$(3)$  since $H-e$ has no proper 3-coloring for any edge $e$ incident to the unique
	 vertex of degree 4 in $H$.
\end{example}

The graph $H$ in Figure 1 shows that the bound for wheel graphs
in Corollary \ref{th:B}  is sharp.
Our next  example shows  that the bound for  series-parallel, and even outerplanar, graphs 
in Corollary \ref{th:B}  is sharp, too.

 \begin{figure}
\centering
\includegraphics[width=10cm]{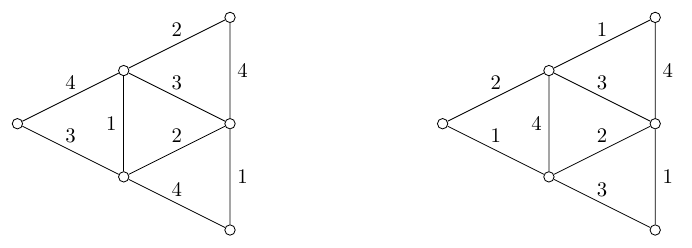}
\caption{A series-parallel and outerplanar graph $G$ with $\chi'_{trans}(G)=4$.}
\end{figure}

\begin{example} Consider the  outerplanar  (and series-parallell) graph $G$ in Figure 2 with
	two different proper $4$-edge colorings $f$ 
	(to the left) and $g$ 
	(to the right).
	The same argument as in the preceding example shows that
$g$ cannot be  obtained from $f$ even by a sequence of
	transformations each of which uses  at most three color classes, that is, $\chi'_{trans}(H)=4$.
\end{example}

A graph $G$ is called {\it line perfect} if the line graph of $G$ is a perfect graph. 

\begin{corollary}
If  $G$ is a line perfect graph, then $\chi'_{trans}(G)\leq 4$.
\end {corollary}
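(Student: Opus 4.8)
The plan is to reduce the statement about line perfect graphs to the already-established Corollary~\ref{th:B}. The key observation is that line perfect graphs admit a structural characterization: by a theorem of Trotter (and independently Maffray), a graph is line perfect if and only if it contains no odd cycle of length at least five as a subgraph. Equivalently, the only odd cycles permitted are triangles. The natural approach is therefore to analyze the block structure of a line perfect graph $G$ and argue that each block falls into one of the families already covered by Corollary~\ref{th:B}, namely bipartite graphs, chordless graphs, series-parallel graphs, wheel graphs, or planar graphs of girth at least $7$.

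First I would invoke the forbidden-subgraph characterization to pin down what a $2$-connected block $B$ of a line perfect graph can look like. Since $B$ contains no odd cycle of length $\geq 5$, its only odd cycles are triangles. I would then appeal to the known decomposition of line perfect graphs: every such graph's blocks are either bipartite, or a triangle $K_3$, or the book graph $K_{1,1,n}$ (equivalently, complete tripartite graphs of the form $K_{1,1,n}$ obtained by gluing triangles along a common edge). The triangle $K_3$ is itself a chordless graph (trivially, having no cycle on four or more vertices), and the books $K_{1,1,n}$ are series-parallel, since one checks they contain no $K_4$-minor. Thus every block of $G$ lies in a family appearing in Corollary~\ref{th:B}.

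The remaining step is to handle the degree hypothesis $\Delta(G)\geq 4$ that Corollary~\ref{th:B} requires. If $\Delta(G)\leq 3$, then $G$ has maximum degree at most three, and the results of McDonald, Mohar and Scheide (the $\Delta(G)=3$ case cited in the introduction) give Kempe-equivalence of the relevant colorings, so $\chi'_{trans}(G)\leq 2\leq 4$; alternatively one notes $\chi'_{trans}(G)\leq 4$ holds vacuously when $\Delta(G)\leq 3$ by the direct bound used at the start of the proof of Theorem~\ref{main:th}. When $\Delta(G)\geq 4$, I would apply Corollary~\ref{th:B} directly, having verified that every block of $G$ is bipartite, chordless, or series-parallel.

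The main obstacle will be nailing down precisely which $2$-connected graphs can be blocks of a line perfect graph and checking that each lies in the stated families. The cleanest route is to cite the structural theorem characterizing line perfect graphs by their blocks (bipartite graphs together with the graphs $K_3$ and $K_{1,1,n}$); the only genuine verification then is that $K_{1,1,n}$ is series-parallel (no $K_4$-minor) and that $K_3$ is chordless, both of which are routine. If one prefers to avoid quoting the block decomposition, the alternative is to argue from the forbidden-subgraph condition directly that any $2$-connected non-bipartite block with a triangle must be a book $K_{1,1,n}$, which is the more delicate combinatorial step.
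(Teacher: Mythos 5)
There is a genuine gap: you have misstated the block characterization of line perfect graphs. The theorem of Maffray that the paper cites says that every block of a line perfect graph is a bipartite graph, \emph{or a complete graph $K_4$}, or a book $K_{1,1,n}$ ($n\geq 1$, so $K_3=K_{1,1,1}$ is already included). Your version drops $K_4$, and $K_4$ genuinely occurs: it is line perfect (its line graph is the octahedron $K_{2,2,2}$, a complete multipartite and hence perfect graph), and it contains a $C_5$-free odd-cycle structure consistent with Trotter's forbidden-subgraph criterion, so for example $K_4$ itself, or $K_4$ glued to a bipartite graph at a cut vertex, is a line perfect graph your argument does not cover. This matters because $K_4$ fits none of the families you feed into Corollary~\ref{th:B}: it is not bipartite, not chordless (every $4$-cycle in it has a chord), and not series-parallel (it is its own $K_4$-minor). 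The only listed family it could fall under is ``wheel graph'' ($K_4=W_3$), but the supporting Proposition for wheels only establishes $W_n\in{\cal A}(3)$ for $n\geq 5$, so that route would require a separate verification you do not supply.

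This is also why the paper does not go through Corollary~\ref{th:B} at all: it checks directly that $K_4$ and $K_{1,1,n}$ are $3$-degenerate graphs belonging to ${\cal A}(3)$ and then applies Theorem~\ref{main:th} with $q=3$. Your proof becomes correct if you restore $K_4$ to the block list and either (a) verify by hand that every subgraph of $K_4$ with maximum degree at most $3$ is $3$-edge-colorable (true: the only nontrivial cases are $K_4$ and $K_4-e$) so that $K_4$ is a $3$-degenerate member of ${\cal A}(3)$ and Theorem~\ref{main:th} applies, or (b) justify that $K_4$ qualifies as a wheel graph in the sense actually used in the proof of Corollary~\ref{th:B}. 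The remaining ingredients of your argument --- that $K_{1,1,n}$ has no $K_4$-minor and hence is series-parallel, and the separate treatment of $\Delta(G)\leq 3$ --- are fine.
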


\begin{proof}
Let $G$ be a line perfect graph.
Then (see \cite{maffray})  
 every block of $G$ is either a bipartite graph, a complete graph $K_4$, or a graph $K_{1,1,n}$, for some $n\geq  1$.
Since $K_4$ and $K_{1,1,n}$ are $3$-degenerate graphs in  the set $\cal A$$(3)$, the corollary follows from 
 \cref{main:th} under $q=3$.
\end{proof}

\smallskip

\begin{remark} 
There are infinite families of graphs $H$ with $\chi'_{trans}(H)=4$, where every block is a bipartite or series-parallell graph.

One such family   of graphs can be constructed as follows.
Consider  the set of $\cal C$ of all square $(n,m)$-grids ($n\geq 3, m\geq 3$). 
For a grid $F\in \cal C$,  take a  copy $G_F$  of the graph $G$ in Figure 2, disjoint from $F$, 
and identify a vertex of degree $2$
in $F$ with  a vertex of degree $2$ in  $G_F$. This  vertex is a cutvertex of the obtained graph $H=H(F,G)$. 
Clearly, $\Delta(H)=4=\chi'(H)$, $H\in \cal A$$(3)$ and $H$ has two blocks, $F$ and $G_F$. 
Since $H$ contains $G_F$ as a block,
 it must satisfy $\chi'_{trans}(H) \geq 4$.
On the other hand, it follows from Corollary \ref{th:B}, that $\chi'_{trans}(H) \leq 4$. Thus, $\chi'_{trans}(H)=4$.

Next,  we  define  an infinite sequence  of disjoint graphs $H_0, H_1,...$ with $\Delta(H_i)=4=\chi'_{trans}(H_i)$, 
for $i=0,1,...$, where every block is a  series-parallel graph. Put $H_0$ be the graph $G$ in Figure 2.
Assume that we already defined  disjoint graphs $H_0,...,H_k$, $k\geq 0$. Take a  copy $G_k$  of the graph $G$ in Figure 2, 
disjoint from $\cup_{i=0}^kH_i$, 
and identify a vertex of degree $2$
in $H_k$ with  a vertex of degree $2$ in  $G_k$. Clearly, every block of the obtained graph $H_{k+1}$ is a series-parallel graph.
As in the preceding paragraph, 
we can show that  $\chi'_{trans}(H_{k+1})=4$.
\end{remark}

Now we consider planar bipartite multigraphs. The following result  for  all bipartite multigraphs  was obtained by 
Asratian and Mirumian \cite{asratian91} (a shorter proof appears  in  \cite{asratian09}):

\begin{theorem} [\cite{asratian91}]
\label{note}
 If  $G$ is a bipartite multigraph, then  any proper $\Delta(G)$--coloring  of $G$ can be transformed into any other 
 proper $\Delta(G)$--coloring of $G$
by a series of transformations such that each of the intermediate colorings is a proper $\Delta(G)$--coloring of $G$ and involves
at most $3$ color classes of the previous coloring. 
 \end{theorem}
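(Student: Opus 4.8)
The plan is to prove the equivalent statement $\chi'_{trans}(G)\le 3$ for every bipartite multigraph $G$, by induction on $t=\Delta(G)$. Since a bipartite multigraph is Class~$1$ (by König's theorem), we have $\chi'(G)=t$. The base case $t\le 3$ is immediate: with only three colors available, every transformation necessarily involves at most three color classes, so any two proper $t$-colorings are trivially $3$-equivalent. For the inductive step I would mimic the structure of the proof of \cref{main:th}: it suffices to show that any two proper $t$-colorings $f$ and $g$ can be carried, by $3$-transformations, to colorings $f^\ast$ and $g^\ast$ with $M(f^\ast,t)=M(g^\ast,t)$; for then $G-M(f^\ast,t)$ is again a bipartite multigraph of maximum degree at most $t-1$, its induced $(t-1)$-colorings are $3$-equivalent by the induction hypothesis, and we may finally reinstate the common color class $t$ on both sides.

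To reach $M(f^\ast,t)=M(g^\ast,t)$ I would repeatedly increase the quantity $|M(f,t)\cap M(g,t)|$. Whenever $M(f,t)\neq M(g,t)$, the subgraph $G(f,g,t)$ has maximum degree at most two, so each of its components is a $t$-alternating path or an even cycle, and it suffices to find $\overline f$ that is $3$-equivalent to $f$ (and possibly $\overline g$ that is $3$-equivalent to $g$) strictly increasing the intersection on one chosen component $K$. The crucial point is that, because $G$ is bipartite, every subgraph of maximum degree at most two can be properly colored with two colors; hence every \emph{correction} used along the way disturbs only the color $t$ together with at most two auxiliary colors, and each step is genuinely a $3$-transformation. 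If $K$ is a path, its endpoints are deficient (the color $t$ is missing there), and I would flip the color $t$ along $K$ and repair the one or two resulting conflicts by a two-coloring of the affected bounded-degree subgraph, exactly by the method used in the proofs of \cref{main:lemma1,main:lemma2} but with only two auxiliary colors. If $K$ is a cycle containing a vertex of degree less than $t$, or meeting a cutvertex, the same flip-and-correct idea applies, following the arguments of \cref{main:lemma3} and \cref{main:corollary}.

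The remaining, and main, obstacle is a cycle component $K$ all of whose vertices have degree $t$. When the block $B\supseteq K$ is itself $t$-regular, this is resolved directly by \cref{th:bipartite}, which produces the desired $\overline f$ increasing $|M(f,t)\cap M(g,t)|$ (here $t\ge 4$, while $t\le 3$ was already disposed of). When $B$ is $2$-connected but not regular, I would invoke the ear-decomposition machinery: by \cref{lem:ear1} there is a $t$-alternating $(\mathrm{ear},K)$-subgraph reaching, directly or through one further alternating path, a vertex of degree less than $t$, and by \cref{lem:ear2} this yields a $t$-alternating path $P$ from $K$ to a deficient vertex meeting $K$ only at its terminus. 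Then $K$ and $P$ satisfy the hypotheses of \cref{main:lemma4}, whose ``enter the cycle and return along $\overleftarrow P$'' flip again only ever touches the color $t$ and two auxiliary colors, so each step remains a $3$-transformation. Iterating these steps drives $|M(f,t)\cap M(g,t)|$ up to its maximum, yielding $M(f^\ast,t)=M(g^\ast,t)$ and closing the induction.
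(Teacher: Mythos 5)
First, note that the paper does not actually prove \cref{note}: it is imported from \cite{asratian91} (with a shorter proof in \cite{asratian09}), so your attempt has to stand on its own rather than be matched against an in-paper argument. Your skeleton --- induction on $t=\Delta(G)$, repeatedly increasing $|M(f,t)\cap M(g,t)|$ over the path and cycle components of $G(f,g,t)$, then deleting the common class $t$ --- is the right one and is exactly the scheme the paper uses for \cref{main:th}. The gap is the single sentence on which everything rests: ``every correction used along the way disturbs only the color $t$ together with at most two auxiliary colors.'' This is asserted, not proved, and it fails for the direct specialization of the paper's algorithm to $q=2$. In Step~0 of the algorithm in \cref{main:lemma1} the correction set $R_0$ must contain the three colors $s_0$, $s_1$ and $c_0$: the color $s_0$ because the interchange along $P_0$ may create two $s_0$-edges at $v_0$; the color $s_1$ because the interchange along $P_1$ may create two $s_1$-edges at $v_{2m_0}$; and the missing color $c_0$ because otherwise the subgraph of edges colored from $\{s_0,s_1\}$ can have degree $3$ at $v_0$ (two $s_0$-edges plus an $s_1$-edge) and then admits no proper recoloring with two colors. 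When $s_0$, $s_1$, $c_0$ are pairwise distinct, the first step therefore moves the four classes $t,s_0,s_1,c_0$ --- a $4$-transformation, not a $3$-transformation. Nor can you simply split Step~0 into two smaller moves: after interchanging $s_0$ and $t$ along $P_0$ alone, the vertex $v_{2m_0}$ is incident with two $t$-edges, so the intermediate coloring is improper, violating the requirement that all intermediate colorings be proper $t$-colorings.

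So the heart of the bipartite theorem is precisely the part you delegate to ``the method of \cref{main:lemma1,main:lemma2} with only two auxiliary colors.'' The known proofs exploit bipartiteness in a stronger way than the fact that every subgraph of maximum degree two is $2$-edge-colorable: for instance, in a bipartite multigraph an interchange along a \emph{maximal} $2$-colored path or along an even $2$-colored cycle always yields a proper coloring, which allows one to shift the color $t$ along the component by moves each touching only $\{t,s_i,s_{i+1}\}$ and never needing the extra color $c_0$. Without such an argument (or some other device capping each correction at two non-$t$ colors), the proof is incomplete at its crucial step. Two secondary points: the endpoint of a path component has $t$ missing under $f$ at one end and under $g$ at the other (not under the same coloring at both ends), which matters for how you orient the flip; and \cref{main:lemma1,main:lemma2,main:lemma3,main:lemma4} together with the class $\mathcal{A}(q)$ are set up for simple graphs, while \cref{note} concerns multigraphs, so the transfer should at least be remarked upon.
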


\begin{figure}
\includegraphics[width=10cm]{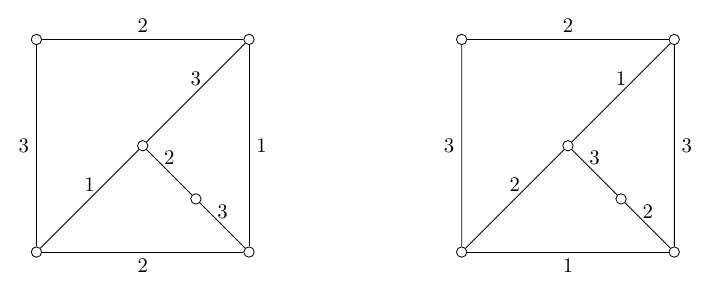}
\caption{A planar bipartite graph $G$ with $\chi'_{trans}(G)=3.$}
\end{figure}

In our terminology  \cref{note} means that $\chi'_{trans}(G)\leq 3$ for every bipartite multigraph $G$. 
In fact, this bound is sharp even for planar
bipartite graphs. Consider, for example,  the graph in Figure 3 with two proper 3-colorings $f$ (on the left) and $g$ (on the right). 
Every Kempe change in $f$ gives the same partition of the edge set of $G$. Therefore $f$ and $g$ are not 
Kempe-equivalent, that is,  $\chi'_{trans}(G)=3$. 

A  better result was found for 3-regular planar bipartite graphs.
 
\begin{theorem}[Belcastro and Haas~\cite{belcastro14}]
\label{belcastro}
Let $G$ be a 3-regular planar  bipartite multigraph. Then all proper $3$-colorings of $G$ are Kempe-equivalent.
\end{theorem}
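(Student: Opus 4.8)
The plan is to prove the statement by induction on the number of vertices, using planarity to locate a small ``reducible'' face. First I set the stage. Since $G$ is bipartite, K\"onig's edge-colouring theorem gives $\chi'(G)=\Delta(G)=3$, and a short parity argument shows $G$ has no bridge: deleting a bridge would split off a bipartite component containing a single vertex of degree $2$, in which the two sides have degree sums $3\alpha-1$ and $3\beta$, forcing $3(\alpha-\beta)=1$. In every proper $3$-colouring each colour class is a perfect matching, and the union of any two classes is a disjoint union of even cycles; a Kempe change is exactly the operation of choosing one such bicoloured cycle and interchanging its two colours along it. We may assume $G$ is connected, since Kempe changes act within components.

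Next I extract a small face. For a connected bipartite cubic planar $G$ we have $|E(G)|=3|V(G)|/2$ and, by Euler's formula, $|F(G)|=2+|V(G)|/2$; since $\sum_{F}\ell(F)=2|E(G)|=3|V(G)|$, the average face length is $6|V(G)|/(|V(G)|+4)<6$. As every face has even length, $G$ has a face of length $2$ or $4$. The base of the induction consists of the smallest such graphs, for example the dipole on two vertices joined by three parallel edges: here interchanging the two colours on a digon realises a transposition of the three colours, and transpositions generate all of $S_3$, so the $3!$ colourings form a single Kempe class.

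If $G$ has a digon, say on $\{u,v\}$ with remaining edges $uu'$ and $vv'$, I reduce it by deleting $u,v$ and the digon and adding the edge $u'v'$, obtaining a smaller bipartite planar cubic multigraph $G'$ (degenerate possibilities such as $u'=v'$ give small graphs checked by hand). In any proper $3$-colouring of $G$ the two digon edges carry the two colours other than $c:=c(uu')=c(vv')$, so the colouring descends to one of $G'$ with $c(u'v')=c$; conversely each colouring of $G'$ lifts in exactly two ways, differing only on the digon. These two lifts are Kempe-equivalent by the single digon swap, and every Kempe change of $G'$ transfers to $G$: a bicoloured cycle of $G'$ avoiding $u'v'$ is already a bicoloured cycle of $G$, while one through $u'v'$ continues in $G$ through the single digon edge of the matching colour. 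Hence the Kempe class structure of $G$ is controlled by that of $G'$, and induction finishes this case.

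The remaining, and genuinely harder, case is when $G$ is simple, so that the small face guaranteed above is a quadrilateral $abcd$ with external edges $ap,bq,cr,ds$. The goal is to reduce this $4$-face to a smaller bipartite planar cubic graph together with a correspondence of $3$-colourings through which Kempe equivalence transfers, exactly as the digon reduction did. The main obstacle is that the naive ``delete two opposite edges and suppress the resulting degree-$2$ vertices'' does \emph{not} preserve $3$-edge-colourings: computing the external colours $e_a=\overline{\{c(da),c(ab)\}}$, etc., shows that the pattern realised around a square is two-dimensional and is \emph{not} the pattern of two monochromatic wires, so the square is not colour-equivalent to a pair of edges. One must therefore replace the square by a more faithful gadget whose $3$-edge-colourings match the square's external patterns, or argue directly by a finite case analysis over the finitely many local colour patterns, while excluding or separately treating degenerate adjacencies among $a,b,c,d,p,q,r,s$ (for instance $p=s$, or a chord such as $ac$ drawn outside the face). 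Once the reduced graph and the colour correspondence are in place, transferring Kempe changes in both directions proceeds as in the digon case; here planarity enters a second time, through the Jordan curve theorem, which ensures that a bicoloured cycle separates the plane so that lifted and projected Kempe chains remain single bicoloured cycles. This local-to-global transfer through the quadrilateral is where essentially all the work of the proof is concentrated.
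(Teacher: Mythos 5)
First, a point of comparison: the paper does not prove this statement at all --- it is quoted verbatim from Belcastro and Haas \cite{belcastro14} and used as a black box in the proof of the corollary on $t$-regular planar bipartite multigraphs. So there is no in-paper argument to measure yours against; what matters is whether your attempt stands on its own.

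It does not, because the central case is left open. Your setup is sound: K\"onig gives $\chi'(G)=3$, every colour class is a perfect matching, all faces of a planar bipartite multigraph are even, and the Euler-formula count correctly forces a face of length $2$ or $4$. The digon reduction is also essentially correct (and the degenerate case $u'=v'$ cannot occur, since it would create an odd closed walk). But when $G$ is simple every face has length at least $4$, so the quadrilateral case is not an edge case --- it is the generic case (the cube graph, prisms over even cycles, etc.), and for it you supply no reduction. You explicitly observe that the naive reduction (delete two opposite edges of the $4$-face and suppress degree-$2$ vertices) does not respect $3$-edge-colourings, and then write that ``one must therefore replace the square by a more faithful gadget \ldots or argue directly by a finite case analysis,'' followed by ``once the reduced graph and the colour correspondence are in place, transferring Kempe changes \ldots proceeds as in the digon case.'' That is a description of what a proof would have to do, not a proof: no gadget is exhibited, no case analysis is performed, and it is not verified that any candidate reduction keeps the graph bipartite, cubic and planar, nor that the fibres of the colouring correspondence are joined by Kempe changes inside $G$ (the property that made the digon case work, and which is exactly what can fail for a $4$-face, where the boundary colour patterns form a larger and less homogeneous set than for a digon). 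Since the induction cannot close without this step, the argument as written establishes the theorem only for multigraphs that can be reduced to a dipole by repeated digon contractions, not for all $3$-regular planar bipartite multigraphs. To repair it you would either have to carry out the quadrilateral reduction in full or replace the whole strategy by one of the known proofs (e.g.\ the argument of Belcastro and Haas).
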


Using   \cref{note} and \cref{belcastro} we obtain here the following generalization of the result of Belcastro and Haas:

\begin{corollary}
\label{asratian19}
Let $G$ be a $t$-regular planar  bipartite multigraph, $t\geq 2$. Then $\chi'_{trans}(G)=2$, 
that is, all proper $t$-colorings of $G$ are 
Kempe-equivalent.
\end{corollary}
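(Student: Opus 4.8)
The plan is to argue by induction on $t$, keeping the multigraph $t$-regular (and hence every color class a perfect matching) throughout, so that the two base cases coincide exactly with the situations already settled. Since by definition $\chi'_{trans}(G)\geq 2$, it suffices to prove that any two proper $t$-colorings $f$ and $g$ of $G$ are Kempe-equivalent, i.e.\ that $\chi'_{trans}(G)\leq 2$; I may also assume $G$ is connected and treat components independently. For $t=2$ the graph is a disjoint union of even cycles, and the two proper $2$-colorings of each even cycle differ by a single Kempe (i.e.\ $(1,2)$-) change, so the claim is immediate. For $t=3$ the statement is exactly \cref{belcastro} of Belcastro and Haas.

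For the inductive step, assume $t\geq 4$ and that the result holds for all $(t-1)$-regular planar bipartite multigraphs. The key reduction is to make the two colorings agree on one color class and then delete it. Concretely, suppose first that $M(f,t)=M(g,t)$. Then $G'=G-M(f,t)$ is a $(t-1)$-regular planar bipartite multigraph, and $f,g$ restrict to proper $(t-1)$-colorings $f',g'$ of $G'$. By the induction hypothesis $f'$ and $g'$ are Kempe-equivalent in $G'$; since every Kempe change in $G'$ uses two colors from $\{1,\dots,t-1\}$ and leaves the perfect matching $M(f,t)$ untouched, it remains a legal Kempe change in $G$. Hence $f$ and $g$ are Kempe-equivalent in $G$, and the induction closes.

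It therefore remains to reduce the general case $M(f,t)\neq M(g,t)$ to the above, by transforming $f$ using Kempe changes only into a coloring $\overline f$ with $M(\overline f,t)=M(g,t)$. Here $D=M(f,t)\bigtriangleup M(g,t)$ is a disjoint union of even cycles on which the edges colored $t$ under $f$ alternate with the edges of $M(g,t)$, and the task is to ``rotate'' the color $t$ around each such cycle so as to strictly increase $|M(f,t)\cap M(g,t)|$; iterating then produces the desired $\overline f$. I expect this overlap-increasing step to be the main obstacle. Note that \cref{th:bipartite} already furnishes such an $\overline f$ that is merely $3$-equivalent to $f$, and that this is best possible for general regular bipartite multigraphs; moreover regularity is genuinely needed, since non-regular planar bipartite graphs can have $\chi'_{trans}=3$ (compare \cref{note} and the example of Figure 3). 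The entire difficulty is thus to replace these $3$-transformations by single Kempe changes, and this is precisely where planarity must enter.

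To carry out the overlap-increasing step with Kempe changes, my plan is to exploit the planar embedding in the spirit of the cubic argument of Belcastro and Haas: a cycle $C$ of $D$ is a closed curve bounding regions of the plane, and one sequences a family of $(t,c)$-Kempe chains — each of which is forced to be an even path or cycle because $G$ is bipartite — so that the color $t$ is shifted consistently along $C$ while every intermediate coloring stays proper, the planarity ensuring that these chains close up and can be ordered without undoing the progress made. Verifying that each such operation is indeed a single Kempe change and that the intersection $|M(f,t)\cap M(g,t)|$ strictly grows at each stage is the technical heart of the argument; once this is established, the deletion--recursion reduction above completes the induction and yields $\chi'_{trans}(G)=2$.
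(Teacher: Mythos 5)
Your reduction framework (induction on $t$, deleting a shared color class $M(f,t)=M(g,t)$ and recursing, and the observation that $M(f,t)\bigtriangleup M(g,t)$ is a disjoint union of alternating even cycles) is sound, and your base cases are correct. But there is a genuine gap exactly where you yourself locate the ``technical heart'': you never prove the overlap-increasing step, i.e.\ that $f$ can be transformed by Kempe changes alone into a coloring $\overline f$ with $M(\overline f,t)=M(g,t)$. The final paragraph is a plan (``sequence a family of $(t,c)$-Kempe chains\dots the planarity ensuring that these chains close up and can be ordered without undoing the progress made'') with no mechanism supplied: you do not specify which chains to take, why each is a legal single Kempe change of the current coloring, or why the overlap with $M(g,t)$ strictly increases. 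As you note, for non-planar regular bipartite multigraphs $3$-transformations are genuinely needed and best possible, so planarity must enter in an essential and nontrivial way; asserting that it will is not a proof. In effect you are proposing to reprove a $t$-color generalization of the Belcastro--Haas theorem from scratch, and that work is missing.

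The paper avoids this entirely by combining the same two ingredients differently. It first applies \cref{note} to obtain a chain $f=f_0,f_1,\dots,f_r=g$ of proper $t$-colorings in which consecutive colorings differ by at most three color classes. For a step involving three classes $\alpha,\beta,\gamma$, it observes that since $G$ is $t$-regular every color class is a perfect matching, so the subgraph $H$ induced by $M(f_i,\alpha)\cup M(f_i,\beta)\cup M(f_i,\gamma)$ is a $3$-regular planar bipartite multigraph; \cref{belcastro} then makes the induced $3$-colorings of $H$ Kempe-equivalent, hence $f_i$ and $f_{i+1}$ are Kempe-equivalent in $G$. All the planarity work is thus localized to the already-proved cubic case, and no new chain-chasing or induction on $t$ is needed. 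If you want to salvage your outline, the missing idea is precisely this: do not try to rotate the color $t$ around the cycles of $M(f,t)\bigtriangleup M(g,t)$ directly; instead route each $3$-transformation supplied by \cref{note} through a $3$-regular subgraph where \cref{belcastro} applies.
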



\begin{proof}
By \cref{note}, there is a sequence of proper $t$-colorings  
$f=f_0,f_1,\dots, f_r$ so that $f_r=g$
 and  $f_{i+1}$ and
$f_i$ are $3$-equivalent
 for each $i=0,1,\dots,r-1$.
If $f_{i+1}$ differs from $f_i$ by two color classes, then $f_i$ and $f_{i+1}$ are Kempe-equivalent.
 Suppose that  $f_{i+1}$ differs from $f_i$ only by three color classes, say the classes $M(f_i,\alpha), M(f_i,\beta)$ and 
 $M(f_i,\gamma)$.
Let $D=M(f_i,\alpha)\cup M(f_i,\beta)\cup M(f_i,\gamma)$. Then
$D=M(f_{i+1},\alpha)\cup M(f_{i+1},\beta)\cup M(f_{i+1},\gamma)$ too. Consider the 
planar $3$-regular bipartite
subgraph $H$ of $G$ induced by the edges in $D$, and
denote by $f_i'$ and $f_{i+1}'$ the $3$-colorings of $H$  induced  by $f_i$ and $f_{i+1}$, respectively.
Then, by \cref{belcastro}, $f_i'$ and $f_{i+1}'$ are Kempe-equivalent, which implies that 
$f_i$ and $f_{i+1}$ are also Kempe-equivalent.
Thus $f_i$ and $f_{i+1}$ are Kempe-equivalent, for each $i=0,\dots,r-1$. Therefore $f$ and $g$ 
are Kempe-equivalent.
\end{proof}

\section{Concluding remarks}

In this section  we discuss Problem \ref{prob:trans}  formulated in the introduction
as well as its analog for vertex  colorings.
First we prove that it suffices to consider Problem \ref{prob:trans} for Class 1 graphs only.

\begin{proposition} 
\label{prop:Class1}
The answer to Problem \ref{prob:trans} is positive if and only if such a constant $C$ exists for 
all  Class 1 graphs. 
\end{proposition}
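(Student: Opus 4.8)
The plan is to prove the two implications separately; the forward one is immediate and the reverse one carries all the content. For the forward direction, if Problem \ref{prob:trans} has a positive answer, witnessed by an absolute constant $C$ with $\chi'_{trans}(G)\leq C$ for every graph or multigraph $G$, then in particular $\chi'_{trans}(G)\leq C$ for every Class 1 graph, so such a constant exists for all Class 1 graphs. Nothing more is needed here.

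For the reverse direction, suppose there is a constant $C$ with $\chi'_{trans}(H)\leq C$ for every Class 1 graph (and multigraph) $H$, and let $G$ be arbitrary with $t:=\chi'(G)$. If $G$ is Class 1 this is exactly the hypothesis, so I would assume $G$ is not Class 1, i.e. $t>\Delta(G)$, whence every vertex of $G$ has degree at most $\Delta(G)<t$. Fixing a vertex $v_0$, I would form $H$ from $G$ by attaching $t-d_G(v_0)$ pendant edges at $v_0$, each leading to a fresh vertex of degree $1$. Then $d_H(v_0)=t$, every other original vertex keeps degree $<t$, the new vertices have degree $1$, so $\Delta(H)=t$, and $H$ remains in the same category as $G$ (simple if $G$ is simple, a multigraph otherwise, since pendant edges to new leaves add no multiplicity).

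Next I would check that $H$ is Class 1 with $\chi'(H)=t$. Indeed $\chi'(H)\geq\chi'(G)=t$ since $G\subseteq H$, while any proper $t$-coloring of $G$ extends to $H$ by assigning the $t-d_G(v_0)$ pendant edges at $v_0$, bijectively, the $t-d_G(v_0)$ colors missing at $v_0$ (these edges meet only degree-$1$ vertices, so no conflict arises), giving $\chi'(H)\leq t$. Hence $\chi'(H)=t=\Delta(H)$, so $H$ is Class 1 and the hypothesis gives $\chi'_{trans}(H)\leq C$. Now, given distinct proper $t$-colorings $f,g$ of $G$, I would extend them as above to proper $t$-colorings $f_H,g_H$ of $H$, take a sequence $f_H=f_0,f_1,\dots,f_k=g_H$ of proper $t$-colorings of $H$ in which consecutive colorings differ by at most $C$ color classes, and restrict each $f_i$ to $E(G)$. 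The restrictions are proper $t$-colorings of $G$ from $f$ to $g$, and restriction cannot increase the number of differing classes: if $M(f_i,j)=M(f_{i+1},j)$ for all $j$ outside a set $S$ with $\card{S}\leq C$, then $M(f_i,j)\cap E(G)=M(f_{i+1},j)\cap E(G)$ for those same $j\notin S$, so the restricted colorings also differ by at most $C$ classes. Thus $f$ and $g$ are $C$-equivalent in $G$, giving $\chi'_{trans}(G)\leq C$.

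I expect the main obstacle to be exactly this transfer step: verifying that the restriction to $E(G)$ of a sequence of $C$-transformations of $H$ is again a legitimate sequence of proper $t$-colorings of $G$ in which each step changes at most $C$ color classes, and that the endpoints restrict precisely to $f$ and $g$. Everything else — the existence of the color-extension, the fact that $H$ is Class 1, and the preservation of the simple/multigraph category — is routine once the one-vertex pendant construction is fixed.
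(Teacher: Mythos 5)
Your forward direction and your transfer step (restricting a sequence of $C$-transformations of the auxiliary graph back to $E(G)$) are fine, and for a \emph{simple} Class 2 graph your pendant-edge construction is a correct and genuinely simpler reduction than the paper's. The problem is that you have quietly strengthened the hypothesis: you assume $\chi'_{trans}(H)\leq C$ for every Class 1 ``graph (and multigraph)'' $H$, whereas the proposition only grants the constant for Class 1 \emph{graphs}, which in this paper are by convention simple. Problem \ref{prob:trans} explicitly ranges over multigraphs, so the reverse direction must reduce an arbitrary multigraph $G$ (where $\chi'(G)$ can be anywhere up to $\lfloor 3\Delta(G)/2\rfloor$, and where ``Class 1'' is not even defined in the paper's terminology) to a \emph{simple} Class 1 graph. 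Your construction does not do this: attaching pendant edges at one vertex leaves all parallel edges of $G$ intact, so $H$ is still a multigraph and the hypothesis cannot be applied to it. This is exactly the case that carries the content of the reverse implication, and your argument as written does not cover it.

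The paper closes this gap with a per-edge gadget rather than a per-vertex one: each edge $e_i=u_iv_i$ of $G$ is replaced by a bipartite graph $G_{e_i}$ on new vertices $u_{e_i}(0),\dots,u_{e_i}(t-1),v_{e_i}(0),\dots,v_{e_i}(t-1)$ together with $u_i,v_i$, containing the edges $u_iv_{e_i}(0)$, $v_iu_{e_i}(0)$ and all edges $u_{e_i}(s)v_{e_i}(l)$ except $u_{e_i}(0)v_{e_i}(0)$. The union $F_G=\bigcup_i G_{e_i}$ is a simple Class 1 graph with $\chi'(F_G)=t=\chi'(G)$, and the gadget forces $f'(u_iv_{e_i}(0))=f'(v_iu_{e_i}(0))$ in every proper $t$-coloring $f'$ of $F_G$, so proper $t$-colorings of $F_G$ project onto proper $t$-colorings of $G$ and every proper $t$-coloring of $G$ lifts. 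The transfer of transformation sequences then proceeds exactly as in your last paragraph. If you want to salvage your own construction, you would still need an additional step turning multiple edges into simple ones while preserving the correspondence of $\chi'(G)$-colorings --- which is precisely what the paper's gadget accomplishes.
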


\begin{proof}
If $\chi'_{trans}(G)\leq C$ for every  graph or multigraph $G$, then $\chi'_{trans}(G)\leq C$ for every  Class 1 graph $G$.

Conversely, assume  that $\chi'_{trans}(H)\leq C$ for every  Class 1 graph $H$, and let $G$ be either a multigraph
or a Class 2 graph.

We will associate with $G$ a Class 1 graph $F_G$ as follows:
Let $\chi'(G)=t$ and $E(G)=\{e_1,\dots,e_n\}$ where $e_i=u_iv_i$, $i=1,\dots,n$.
For each edge $e_i$  we define a set of new vertices 
$$V_{e_i}=\{u_{e_i}(0), u_{e_i}(1),\dots,u_{e_i}(t-1), v_{e_i}(0), v_{e_i}(1),\dots,v_{e_i}(t-1)\}$$
 such that 
$V_{e_i}\cap V(G)=\emptyset$ and $V_{e_i}\cap V_{e_j}=\emptyset$ for $1\leq i<j\leq n$. 

Furthermore, with every edge $e_i$ we associate a bipartite graph $G_{e_i}$ with vertex set $V(G_{e_i})=V_{e_i}\cup \{u_i,v_i\}$ 
and edge set
$$E(G_{e_i})=\{u_iv_{e_i}(0), v_iu_{e_i}(0)\}\cup (\{u_{e_i}(s)v_{e_i}(l) : 0\leq s, l\leq t-1\})\setminus \{u_{e_i}(0)v_{e_i}(0)\}.$$
Now we define the graph $F_G$ as the union of bipartite graphs $G_{e_1},\dots,G_{e_n}$, that is,
$F_G=\cup_{i=1}^nG_{e_i}$.
Clearly, $F_G$ is a    graph with maximum degree $t$. 
With every  proper $t$-coloring $f$ of $G$ we associate a  proper $t$-coloring $f'$ of $F_G$ as follows:
For each $i=1,\dots,n$,  we properly color the bipartite graph $G_{e_i}$ with colors $1,2,\dots,t$ such that
the edges $u_iv_{e_i}(0)$ and $v_iu_{e_i}(0)$ receive the color $f(e_i)$. This is possible because 
the graph obtained from $G_{e_i}$ by deleting the vertices $u_i$ and $v_i$, and adding the edge $u_{e_i}(0)v_{e_i}(0)$
is a complete bipartite graph $K_{t,t}$.
 The obtained $t$-coloring $f'$ of $F_G$ is proper and $\chi'(F_G)=t$,  that is,   $F_G$ is a Class 1 graph.
 Then, by our assumption, $\chi'_{trans}(F_G)\leq C$.  
 
 It is not difficult to verify that  for any proper $t$-coloring $f'$ of $F_G$  we have  $f'(u_iv_{e_i}(0))=f'(v_iu_{e_i}(0))$, 
 for $i=1,\dots,n$. 
 Then every proper $t$-coloring of $F_G$ induces a  proper $t$-coloring $f$ of the multigraph $G$
 by taking $f(e_i)=f'(u_iv_{e_i}(0))$, for $i=1,\dots,n$. This implies that $\chi'_{trans}(G)\leq \chi'_{trans}(F_G)\leq C$.
\end{proof}

In general, Problem \ref{prob:trans} remains open.  The next result shows that   the answer to a similar problem for vertex colorings is negative.
Let us first recall some definitions.  A vertex set $U\subseteq V(G)$ is {\em independent} in a graph $G$,
 if no two vertices of $U$ are adjacent in $G$. A {\em proper vertex $t$-coloring} of $G$
  is a partition of $V(G)$ in $k$ independent sets $U_1, \dots, U_t$, called color classes. 
The minimum number $t$ for which there exists a  proper vertex $t$-coloring of $G$ is called 
the {\em chromatic number} of  $G$ and is denoted by $\chi(G)$.

  \begin{proposition}
\label{th:phenomen}
For every integer 
$n\geq 3$  there exist an infinite class $\cal G$$(n)$ of regular graphs 
with chromatic number $n$  such that for every $G$ in $\cal G$$(n)$ any two proper 
$n$-colorings  of $G$
 can  be transformed to each other  only by a "global  transformation" involving all $n$ color classes.
\end{proposition}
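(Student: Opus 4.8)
The plan is to reduce the statement to a clean combinatorial separation property and then exhibit an explicit infinite family of regular graphs realizing it. Call a transformation an \emph{$(n-1)$-transformation} if it changes at most $n-1$ of the $n$ color classes, so that at least one color class is left untouched. Since any two proper $n$-colorings differ in at most all $n$ classes, one global transformation always passes directly from one to the other; the content is therefore to produce, for each $n\ge 3$, infinitely many regular graphs $G$ with $\chi(G)=n$ possessing two proper $n$-colorings that are \emph{not} joined by any sequence of $(n-1)$-transformations through proper $n$-colorings.

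First I would isolate the mechanism as a lemma: if $G$ admits exactly two proper $n$-coloring partitions $\mathcal P$ and $\mathcal Q$, and no part of $\mathcal P$ coincides with a part of $\mathcal Q$, then $\mathcal P$ and $\mathcal Q$ cannot be transformed into each other by $(n-1)$-transformations. Indeed, a single $(n-1)$-transformation applied to a coloring with partition $\mathcal P$ must leave some color class $A\in\mathcal P$ fixed, so the resulting coloring has $A$ as one of its parts; as the only available partitions are $\mathcal P$ and $\mathcal Q$ and $A\notin\mathcal Q$, the new partition is again $\mathcal P$. Hence every $(n-1)$-transformation preserves the underlying partition, and no sequence of them can turn a $\mathcal P$-coloring into a $\mathcal Q$-coloring, whereas a global transformation does. (Relabelings of a single partition are $(n-1)$-equivalent, so the genuinely distinct colorings are $\mathcal P$ and $\mathcal Q$, and interchanging them requires all $n$ classes.)

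Next I would construct the graphs. Fix $n\ge 3$ and a multiplicity $t\ge 1$, and take $2n$ pairwise disjoint independent sets $A_1,\dots,A_n,B_1,\dots,B_n$, each of size $t$. Make $\bigcup_i A_i$ and $\bigcup_j B_j$ each a complete $n$-partite graph with parts $A_i$ (resp.\ $B_j$), and join a vertex of $A_i$ to a vertex of $B_j$ exactly when $j\notin\{i-1,i\}$ (indices mod $n$); equivalently, the \emph{non-adjacencies} between the $A$-side and the $B$-side are precisely the (blown-up) edges of the $2n$-cycle $Z=A_1B_1A_2B_2\cdots A_nB_nA_1$. A direct degree count shows this graph $G_n^{(t)}$ is $(2n-3)t$-regular, and it has $2nt$ vertices, so distinct values of $t$ give distinct graphs. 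Since $\bigcup_i A_i$ is complete $n$-partite it is uniquely $n$-colorable, so in any proper $n$-coloring each $A_i$ is monochromatic with its own color, and likewise each $B_j$; thus every coloring reduces to a proper $n$-coloring of the quotient on the $2n$ super-vertices, in which each color class must consist of exactly one non-adjacent $A$-part and $B$-part. These colorings are exactly the perfect matchings of the cycle $Z$, of which there are precisely two, yielding $\mathcal P=\{\,A_i\cup B_i\,\}$ and $\mathcal Q=\{\,A_{i+1}\cup B_i\,\}$. These two partitions share no part, and $\chi(G_n^{(t)})=n$ since the graph contains $K_n$ (one vertex per $A_i$) while $\mathcal P,\mathcal Q$ furnish proper $n$-colorings.

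Finally I would combine the pieces: by the lemma, $\mathcal P$ and $\mathcal Q$ are not $(n-1)$-equivalent in $G_n^{(t)}$, so transforming one into the other forces a transformation involving all $n$ color classes, which a single global transformation achieves; setting $\mathcal G(n)=\{\,G_n^{(t)}:t\ge 1\,\}$ then completes the proof. I expect the main obstacle to be the coloring-enumeration step: verifying that the construction admits \emph{exactly} two coloring partitions, with no unexpected one (in particular none sharing a class with another). The two ingredients carrying this are the unique colorability of complete multipartite graphs, which rigidifies the blow-up so that the $A_i$ and $B_j$ are forced monochromatic, and the elementary fact that an even cycle $C_{2n}$ has exactly two perfect matchings; the separation lemma itself, by contrast, is immediate once the two-partition structure is established.
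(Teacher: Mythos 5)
Your proof is correct and follows essentially the same route as the paper: your $G_n^{(t)}$ is precisely the paper's graph $\overline{H(t,n)}$ (the complement of the $2n$-cycle of cliques, with $A_i=V_{2i-1}$, $B_i=V_{2i}$), and both arguments rest on showing there are exactly two proper $n$-coloring partitions, sharing no class, so that any transformation between them must touch all $n$ classes. Your explicit ``separation lemma'' is a slightly cleaner packaging of the paper's closing observation, but the construction and the key enumeration of the two partitions coincide.
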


\begin{proof}
For any pair of integers $n\geq 3$ and $p\geq 1$ define the graph $H(p,n)$ as follows:
its vertex set is $V_1\cup\dotsb\cup V_{2n}$,
where $V_1,\dotsc,V_{2n}$ are pairwise disjoint sets of cardinality $p$,
and two vertices of $H(p,n)$ are adjacent if and only if
they both belong to $V_1\cup V_{2n}$ or to $V_i\cup V_{i+1}$
for some $i\in\{1,2,\dotsc,2n-1\}$.
Clearly, $H(p,n)$ is a $(3p-1)$-regular graph with $2pn$ vertices where
 the maximum size of a clique  is $2p$.

Put   $\cal G$$(n)=\{\overline{H(p,n)} : p=1,2,\dots\}$ where $\overline{H(p,n)}$ denote the complement of the graph $H(p,n)$.
Clearly, $\overline{H(p,n)}$ is a $(2np-3p)$-regular  graph 
with  $2pn$ vertices where the maximum number of independent vertices is $2p$. 
This implies that the chromatic number of $\overline{H(p,n)}$ is at least $n$. 
Furthermore, 
the vertices of $\overline{H(p,n)}$ can be properly colored with $n$ colors if  
the vertex set of $\overline{H(p,n)}$
  can be partitioned into $n$ disjoint independent sets of size $2p$. 
This is equivalent to partitioning the set of vertices  of $H(p,n)$ into $n$ disjoint 
cliques of size $2p$.

 It is not difficult to see that $H(p,n)$ has  only two distinct partitions of the set of its vertices   into $n$ cliques of size $2p$.
 One of the partitions is $\{C_1,\dots,C_n\}$ where $C_i=V_{2i-1}\cup V_{2i}$, for $i=1,\dots,n$,
 and the other is $\{R_1,\dots,R_n\}$ where $R_1=V_{2n}\cup V_1$ and $R_{i}=V_{2i-2}\cup V_{2i-1}$, for $i=2,\dots,n$.
 
This implies that the chromatic number of  $\overline{H(p,n)}$ is $n$ and $\overline{H(p,n)}$ has only two distinct proper $n$-colorings. 
One of them, say $f$,  is obtained when the color $i$ is received by the vertices in the set $C_i=V_{2i-1}\cup V_{2i}$,  for $i=1,\dots,n$,
and the other, say $g$,  is obtained when  the vertices in $R_1=V_1\cup V_{2n}$ receive the color $1$  and  the vertices  in $R_i=V_{2i-2}\cup V_{2i-1}$,  receive the color $i$, for $i=2,\dots,n$.

It follows that the subgraph induced by any  $s$ color classes 
from $\{C_1,\dots,C_n\}$  of $f$ 
is distinct from the subgraph induced by any $s$ color classes of $g$ from
$\{R_1,\dots,R_n\}$, for every $s \leq n-1$. 
Thus in order to transform $f$ into $g$ we have to change all  color classes of $f$.
\end{proof}

\end{document}